\documentclass[12pt]{amsart}
\usepackage{amssymb}
\usepackage{mathtools}
\usepackage[english,french]{babel}
\usepackage{epsfig}
\setlength{\textheight}{20cm} \textwidth16cm \hoffset=-2truecm

\usepackage{mathptmx}
\usepackage{amsmath,amsfonts,amssymb}
\usepackage{mathtools}
\usepackage{mathrsfs}
\usepackage{graphicx}
\usepackage{latexsym}
\usepackage{verbatim}
\usepackage{xcolor}

\newcommand{\hide}[1]{}
\numberwithin{equation}{section}

\def\v{\varphi}
\def\Re{{\sf Re}\,}

\def\eps{\varepsilon}

\newcommand{\D}{\mathbb D}

\newcommand{\de}{\partial}
\newcommand{\R}{\mathbb R}

\newcommand{\tr}{\widetilde{\rho}}

\newcommand{\C}{\mathbb C}

\newcommand{\B}{\mathbb B}

\newcommand{\Aut}{{\sf Aut}(\mathbb D)}
\newcommand{\oD}{\overline{\mathbb D}}

\newcommand{\N}{\mathbb N}

\def\Re{{\sf Re}\,}







\def\dist{{\rm dist}}

\def\Aut{{\sf Aut}}

\def\Re{{\sf Re}\,}

\def\v{\varphi}
\def\Re{{\sf Re}\,}

\def\1#1{\overline{#1}}
\def\2#1{\widetilde{#1}}
\def\3#1{\widehat{#1}}
\def\4#1{\mathbb{#1}}
\def\5#1{\frak{#1}}
\def\6#1{{\mathcal{#1}}}

\def\Re{{\sf Re}\,}

\newcommand{\mcite}[1]{\csname b@#1\endcsname}

\theoremstyle{theorem}

\setcounter {result} {64}






\def\dist{{\rm dist}}

\def\Aut{{\sf Aut}}

\def\Re{{\sf Re}\,}

\emergencystretch15pt \frenchspacing

\newtheorem{theorem}{Theorem}[section]
\newtheorem{lemma}[theorem]{Lemma}
\newtheorem{proposition}[theorem]{Proposition}
\newtheorem{corollary}[theorem]{Corollary}

\theoremstyle{definition}
\newtheorem{definition}[theorem]{Definition}
\newtheorem{example}[theorem]{Example}

\theoremstyle{remark}
\newtheorem{remark}[theorem]{Remark}

\newtheorem{problem}[theorem]{Open problem}
\newtheorem{problem1}[theorem]{Problem}

\numberwithin{equation}{section}

\title[Rigidity]{A new Schwarz-Pick Lemma at the boundary and rigidity of\\[2mm] holomorphic maps}

\author[F. Bracci]{Filippo Bracci$^\dag$}
\address{F. Bracci: Dipartimento di Matematica, Universit\`a di Roma ``Tor Vergata", Via della Ricerca
Scientifica 1, 00133, Roma, Italia.} \email{fbracci@mat.uniroma2.it}

\author[D. Kraus]{Daniela Kraus}
\address{D. Kraus: Department of Mathematics,
  University of W\"urzburg, Emil Fischer Strasse 40, 97074, W\"urzburg,
  Germany.} \email{dakraus@mathematik.uni-wuerzburg.de}

\author[O. Roth]{Oliver Roth}
\address{O. Roth: Department of Mathematics, University of W\"urzburg, Emil Fischer Strasse 40, 97074, W\"urzburg, Germany.} \email{roth@mathematik.uni-wuerzburg.de}

\keywords{}

\thanks{$^\dag\,$Partially supported by  PRIN {\sl Real and Complex
Manifolds: Topology, Geometry and holomorphic dynamics} n.2017JZ2SW5, by INdAM and by the   MUR Excellence Department Project MatMod@TOV
CUP:E83C23000330006}

\long\def\REM#1{\relax}

\begin{document}
\selectlanguage{english}
\begin{abstract}
  In this paper we establish several invariant boundary versions of the (infinitesimal) Schwarz-Pick lemma  for
  conformal pseudometrics on the unit disk and for holomorphic selfmaps of  strongly convex domains  in $\C^N$ in
  the spirit of the boundary Schwarz lemma of Burns--Krantz. Firstly, we focus on
  the case of the unit disk and prove a general boundary rigidity theorem for
  conformal pseudometrics with variable curvature. In its simplest cases this result already includes new types of boundary
  versions of the lemmas of Schwarz--Pick, Ahlfors--Schwarz and Nehari--Schwarz.
  The  proof is based on 
  a new Harnack--type inequality as well as a boundary Hopf lemma for
  conformal pseudometrics  which extend earlier interior rigidity results of
  Golusin, Heins, Beardon, Minda  and others. Secondly, we  prove similar rigidity theorems for
  sequences of conformal pseudometrics, which even in the interior case appear
  to be new. For instance,  a first sequential  version of the strong form of
  Ahlfors' lemma is obtained. As an auxiliary tool we establish a Hurwitz--type result about preservation of zeros of sequences of conformal pseudometrics. 
Thirdly, we apply the one--dimensional sequential boundary 
rigidity results together with a variety of techniques from several complex variables
to prove a boundary version of the Schwarz--Pick lemma for holomorphic
maps of  strongly convex domains in $\C^N$ for $N>1$.
\end{abstract}

\maketitle

\renewcommand{\thefootnote}{\fnsymbol{footnote}}
\setcounter{footnote}{2}
\setcounter{tocdepth}{1}
\tableofcontents

\section{Introduction} \label{sec:intro}

The Schwarz lemma is one of the most basic results in complex analysis that capture the rigidity of holomorphic mappings. Its uniqueness part or ``strong form''  guarantees that a holomorphic selfmap  of the open Euclidean unit ball $\B^N$  of $\C^N$  which agrees with the identity mapping up to order $1$ at some interior point $z_0$ \textit{is} in fact the identity map 
(The historical sources are Schwarz \cite[Vol.~II, p.~110]{Schwarz1869} for $N=1$ and   Cartan \cite{Car31} for $N>1$).

For a long time now, it has been of interest to investigate if similar results hold when $z_0$ is a boundary point. We mention the pioneering work of Loewner 1923 (\cite[Hilfssatz I]{Loe23})   for the case  $N=1$
and for maps that are smooth up to the boundary. The first boundary Schwarz lemma for general  holomorphic selfmaps has been obtained by   Burns and Krantz  in their pioneering paper \cite{BurnsKrantz1994}. Their work has  spurred an enourmous interest in recent years in Schwarz--type boundary rigidity results for holomorphic maps in one and several complex variables, see e.g.~\cite{BaraccoZaitsevZampieri2006, Bol2008,BZ,Chelst2001,Dubinin2004, LiuTang2016, Osserman2000, Shoikhet2008, TLZ17,TauVla2001, Zimmer2018}.
For an excellent account of many of the results in the  one variable setting obtained  until 2014 we refer to the survey \cite{EJLS2014} by Elin et al.

One of the most important extensions of the Schwarz lemma is without doubt the Ahlfors--Schwarz lemma \cite{Ahlfors1938} which  
 provides a far reaching differential--geometric extension of the lemma of Schwarz--Pick,  the invariant form of the Schwarz lemma.
 The Ahlfors lemma exhibits a striking and very useful extremal property of the Poincar\'e metric
 $$
 \lambda_{\D}(z) \, |dz|:=\frac{|dz|}{1-|z|^2}
 $$ of the open unit disk $\D:=\B^1$ among all conformal Riemannian pseudometrics $\lambda(z) \, |dz|$ on $\D$: if $\kappa_{\lambda}(z)$ denotes the  Gauss curvature
of $\lambda(z) \, |dz|$, then 
$\kappa_{\lambda}(z) \le \kappa_{\lambda_{\D}}(z)=-4$ for all $z \in \D$ implies
\begin{equation} \label{eq:intro1}
\lambda(z) \le \lambda_{\D}(z) \quad \text{ for all }z \in \D \, .
\end{equation}
The Ahlfors lemma has been generalized in many directions, notably by Nehari \cite{Neh1946} and Yau \cite{Yau1978}. By analogy with the Schwarz lemma there is a strong form
of the Ahlfors--Schwarz lemma: If equality  holds in (\ref{eq:intro1}) at some point $z \in \D$, then $\lambda(z)=\lambda_{\D}(z)$ for any $z \in \D$. This striking interior rigidity property has first been established by Heins \cite{Heins1962} in 1962, who used it to investigate the value of the Bloch constant, the determination of which is one of the most intriguing open problems in complex analysis of one variable. 
Later, different proofs of the case of interior equality of Ahlfors' lemma
have been given by Royden \cite{Royden1986} and Minda \cite{Minda1987}. Only recently, a corresponding interior rigidity result for (a generalization of) Nehari's extension of the Ahlfors--Schwarz lemma has been   obtained in \cite{KR2013}.


The main purpose of the present paper is to prove invariant boundary rigidity results for
holomorphic selfmaps of the unit disk and of strongly convex domains in $\C^N$ as well as for conformal pseudometrics on the disk in the spirit of the boundary Schwarz lemma of Burns and Krantz. This includes
boundary rigidity versions of
\begin{itemize}
  \item[(a)]
    the Schwarz--Pick lemma for holomorphic selfmaps of $\B^N$ for any $N \ge 1$, and
    \item[(b)] the Ahlfors--Schwarz as well as the Nehari lemma for conformal pseudometrics  in $\D$.
\end{itemize}

For the case $N=1$ we derive our results in a unified way  by establishing
general boundary rigidity properties of conformal pseudometrics on the unit
disk. This approach clearly shows that all our one--dimensional rigidity
results are in reality comparison theorems for conformal Riemannian
pseudometrics with appropriate curvature bounds.  It also clarifies that the
error term in those rigidity results is determined essentially by the
curvature of the metric.
The proof is based on a new Harnack--type inequality for conformal pseudometrics with
variable curvature (Theorem \ref{thm:ahlforsdiskgeneral2}). It  leads to  boundary and interior rigidity results also for \textit{sequences} of such metrics. To the best of our knowledge these results are new even in the ``interior'' case. They include, in particular, sequential  versions of the  strong Schwarz--Pick, the strong Ahlfors--Schwarz and the strong Nehari--Schwarz lemma. The sequential version of the one--dimensional Schwarz--Pick lemma at the boundary will play a decisive role for the proof of the Schwarz--Pick lemma for holomorphic selfmaps of the unit ball in $\C^N$ at the boundary. In fact, this latter result has been one
of our primary sources of motivation for looking at rigidity results for sequences of conformal pseudometrics. Along the way we prove a  new Hurwitz--type result for preservation of zeros of conformal pseudometrics (Theorem \ref{prop:zeros}).
\hide{

The main technical tool, which will be developed in this paper,  is a new  Harnack--type
inequality for conformal pseudometrics with
variable curvature. This Harnack estimate provides enough uniformity to
establish boundary and interior rigidity results also for \textit{sequences} of
conformal pseudometrics. To the best of our knowledge these results are new even in the ``interior'' case. They include, in particular, ``sequential''  versions of the  strong Schwarz--Pick, the strong Ahlfors--Schwarz and the strong Nehari--Schwarz lemma. When dealing with sequences of conformal pseudometrics an additional difficulties arises since one needs to control the zeros of those pseudometrics. This is a delicate task for which we use our Harnack inequality  as well as a new Hurwitz--type result for preservation of zeros of conformal pseudometrics (Theorem\ref{prop:zeros}). The sequential version of the one--dimensional Schwarz--Pick lemma at the boundary will play a decisive role for the proof of the Schwarz--Pick lemma for holomorphic selfmaps of the unit ball in $\C^N$ at the boundary. In fact, this latter result has been one
of our primary sources of motivation for looking at rigidity results for sequences of conformal pseudometrics.}

As we will see,
the results of this paper contain and extend many of the existing interior and
boundary rigidity results for holomorphic maps and conformal metrics.
In particular,
 the boundary Schwarz--Pick lemma for the unit disk is easily seen to imply the one--dimensional boundary
 Schwarz lemma of Burns and Krantz. However, unlike the Burns--Krantz theoren,
 the boundary Schwarz--Pick lemma -- just as the other results of this paper --  is
 \textit{conformally invariant}, so it easily extends to holomorphic from the
 disk into hyperbolic Riemann surfaces.

 The organization of this paper is as follows. In an introductory Section \ref{sec:2} we
 state and discuss boundary versions of the Lemmas of Schwarz--Pick for the disk, 
 the ball and strongly convex domains, of the Ahlfors--Schwarz lemma and of (an extension of) the
 Nehari--Schwarz lemma. This is done primarly for the sake of readability
 as it allows to easily grasp the essential core of our work at a
 fairly nontechnical level and also to put the results into context with prior work.
 In Sections \ref{sec:3} and \ref{sec:4} we describe two general
 boundary rigidity results for conformal metrics with variable negative
 curvature on the unit disk, which contain (most of) the results of Section
 \ref{sec:2} as special cases. Section \ref{sec:4}  concludes with a sequential
 version of the boundary Schwarz--Pick lemma of Section \ref{sec:2} which is
 needed for extensions to higher dimensions.  In Sections \ref{sec:5} we discuss and prove  Theorem \ref{thm:ahlforsdiskgeneral2}, one of the main contributions of this paper.
 It provides a Harnack--type inequality as well as a corresponding boundary Hopf lemma for conformal pseudometrics. Section \ref{sec:6} presents a Hurwitz--type result for conformal pseudometrics which is then combined with the Harnack inequality of Theorem \ref{thm:ahlforsdiskgeneral2} to prove our main rigidity result for sequences of conformal pseudometrics on the unit disk. Then attention shifts to several complex variables. In Section \ref{sec:8} we state and prove a boundary rigidity result of Schwarz--Pick type for holomorphic  maps of strongly convex domains. 
 
\smallskip

The authors thank the anonymous referee for several useful comments which improved the original manuscript.

\section{The Schwarz--Pick, Ahlfors--Schwarz and Nehari--Schwarz lemmas at the boundary of the unit disk and in higher dimension} \label{sec:2}

\subsection{The Schwarz--Pick lemma at the boundary}
The classical Schwarz--Pick lemma says that if $f$ is a holomorphic selfmap of the open
unit disk $\D=\{z \in \C \, : \, |z|<1\}$, then
$$ f^h(z):=\left(1-|z|^2 \right) \frac{|f'(z)|}{1-|f(z)|^2} \le 1 $$
 for every $z \in \D$. The ``strong'' form of the Schwarz-Pick lemma  guarantees that  $f^h(z_0)=1$ for one point $z_0 \in \D$  if and only if 
$f$ belongs to the group $\Aut(\D)$ of conformal automorphisms of $\D$. This  ``interior rigidity'' result
has the following counterpart on the boundary, which can be proved in various
ways (see e.g.~Remark \ref{rem:golusin} below). Its main purpose for us is to provide  the prime motivating example throughout this paper.

\begin{theorem}[The strong form of the Schwarz--Pick lemma at the boundary of the unit disk] \label{thm:maindisk}
Suppose $f$ is a holomorphic selfmap of $\D$ such that
\begin{equation} \label{eq:limit}
   f^h(z_n) = 1+o\left((1-|z_n|)^2\right)  
\end{equation}
for  some sequence $(z_n)$ in $\D$ with $|z_n| \to 1$.
Then $f \in \Aut(\D)$.
\end{theorem}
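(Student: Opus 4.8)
The plan is to recast the statement as a rigidity property of the pullback pseudometric $\lambda:=f^{*}\lambda_{\D}$, where $\lambda(z)=|f'(z)|/(1-|f(z)|^{2})$, and then to propagate the near-extremality of $f^{h}=\lambda/\lambda_{\D}$ along $(z_{n})$ to the whole of $\D$ by a quantitative Harnack-type estimate. Since $\lambda$ is a conformal pseudometric whose Gauss curvature equals $-4$ away from the isolated zeros of $f'$, the Ahlfors--Schwarz lemma gives $\lambda\le\lambda_{\D}$, i.e.\ $f^{h}\le 1$. Set $u:=\log(\lambda_{\D}/\lambda)=-\log f^{h}\ge 0$ (with $u=+\infty$ at the zeros of $f'$). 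Using $\Delta\log\lambda_{\D}=4\lambda_{\D}^{2}$ and $\Delta\log\lambda=4\lambda^{2}$ one computes, away from $\{f'=0\}$,
\[
\Delta u = 4\bigl(\lambda_{\D}^{2}-\lambda^{2}\bigr)=4\lambda_{\D}^{2}\bigl(1-e^{-2u}\bigr),
\]
so $0\le\Delta u\le 8\lambda_{\D}^{2}u$; across the zeros of $f'$ the function $u$ only acquires superharmonic logarithmic poles. Hence $u$ is a nonnegative subharmonic function which is simultaneously a supersolution of the elliptic equation $\Delta\psi-8\lambda_{\D}^{2}\psi=0$. By the strong minimum principle, either $u\equiv 0$ -- in which case $f^{h}\equiv 1$ and $f\in\Aut(\D)$, and we are done -- or $u>0$ throughout $\D$.

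Assume the second alternative and, passing to a subsequence, that $z_{n}\to\zeta_{0}\in\partial\D$. The heart of the matter is a boundary Hopf/Harnack inequality for such $u$: a positive supersolution of $\Delta\psi-8\lambda_{\D}^{2}\psi=0$ cannot approach $0$ along a boundary sequence faster than the distance to $\partial\D$, so there is a constant $c>0$ with $u(z)\ge c\,(1-|z|^{2})$ for $z$ near $\zeta_{0}$; equivalently, there is a Harnack-type inequality $u(0)\le C(z)\,u(z)$ whose constant satisfies $C(z)=O\bigl((1-|z|^{2})^{-2}\bigr)$ as $z\to\partial\D$ -- this is the content of Theorem~\ref{thm:ahlforsdiskgeneral2}. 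Either form contradicts the hypothesis $u(z_{n})=-\log f^{h}(z_{n})=o\bigl((1-|z_{n}|)^{2}\bigr)$: in the first, $c(1-|z_{n}|^{2})\le u(z_{n})=o\bigl((1-|z_{n}|)^{2}\bigr)$ together with $(1-|z_{n}|)^{2}\asymp(1-|z_{n}|^{2})^{2}$ forces $c\to 0$; in the second, $u(0)\le C(z_{n})\,u(z_{n})=o(1)$ forces $u(0)=0$ and hence, by the minimum principle above, $u\equiv 0$. In all cases $f\in\Aut(\D)$.

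The genuinely hard point is the quantitative boundary estimate invoked in the second paragraph: it must be valid for an arbitrary sequence $z_{n}\to\zeta_{0}$ (in particular for tangential approach) and using only that $u$ is small \emph{along that one sequence}, not on a full boundary neighbourhood -- this is why a robust Harnack inequality for conformal pseudometrics with variable curvature is needed rather than a soft compactness argument. Indeed, the naive renormalization $g_{n}:=T_{f(z_{n})}\circ f\circ T_{z_{n}}\in\Hol(\D,\D)$, where $T_{a}\in\Aut(\D)$ interchanges $0$ and $a$, satisfies $g_{n}(0)=0$ and $|g_{n}'(0)|=f^{h}(z_{n})\to 1$, yet it degenerates -- $T_{z_{n}}$ collapses locally uniformly to the constant $\zeta_{0}$ -- so normal-family compactness recovers nothing about $f$, and one really needs the decay rate $o\bigl((1-|z_{n}|)^{2}\bigr)$. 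This rate is moreover sharp: for the non-automorphism $f(z)=z^{2}$ one has $1-f^{h}(z)=(1-|z|)^{2}/(1+|z|^{2})$, so the little-$o$ cannot be replaced by a big-$O$. (A more classical alternative, indicated in Remark~\ref{rem:golusin}, controls the oscillation of $\log f^{h}$ via Golusin-type estimates in terms of the hyperbolic distance.)
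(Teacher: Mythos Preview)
Your argument is the paper's own: reduce to the pullback $\lambda=f^{*}\lambda_{\D}$, compute $\Delta u\le 8\lambda_{\D}^{2}u$ for $u=-\log(\lambda/\lambda_{\D})$ (this is exactly Step~(i) of the proof of Theorem~\ref{lem:hopf} with $\mu=\lambda_{\D}$ and $c=4$), and invoke the resulting boundary Harnack/Hopf estimate to force $u\equiv 0$.

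One correction: your alternative (a), the lower bound $u(z)\ge c(1-|z|^{2})$, is false --- your own example $f(z)=z^{2}$ gives $u(z)\sim\tfrac{1}{2}(1-|z|)^{2}=o(1-|z|)$.  The singular zero-order coefficient $8\lambda_{\D}^{2}=8/(1-|z|^{2})^{2}$ spoils the ordinary Hopf lemma; the correct rate, and the one the paper actually establishes in (\ref{eq:hopf2})--(\ref{eq:hopf3}) with exponent $c/2=2$, is $u(z)\ge c(1-|z|^{2})^{2}$.  Your alternative (b), the Harnack form $u(0)\le C(z)u(z)$ with $C(z)=O\bigl((1-|z|^{2})^{-2}\bigr)$, has the right exponent and does yield the contradiction, so the proof survives --- but (a) and (b) are not ``equivalent'' as written.
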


 The error term is sharp.
For $f(z)=z^2$ we have
$$ f^h(z)=
\frac{2
  |z|}{1+|z|^2}=1-\frac{(1-|z|)^2}{1+|z|^2}=1-\frac{1}{2} \left(1-|z| \right)^2+o\left((1-|z|)^2\right) \qquad (|z| \to 1) \, .$$
Hence one cannot replace ``little $o$'' by ``big $O$'' in Theorem
\ref{thm:maindisk}. Example \ref{exa:2} below will show that Theorem
\ref{thm:maindisk} is sharp even for univalent selfmaps of the unit disk.

\begin{remark}[The Burns--Krantz theorem by way of Theorem
  \ref{thm:maindisk}]
From Theorem \ref{thm:maindisk}  one can easily deduce the well--known boundary Schwarz lemma of Burns and Krantz
\cite{BurnsKrantz1994}, which asserts that if  $f$ is a holomorphic selfmap of $\D$ such that
\begin{equation} \label{eq:burnskrantz1}
f(z)=z+o\left(|1-z|^3\right) \quad \text{ as } z \to 1\, ,
\end{equation}
then $f(z) \equiv z$. In fact, 
a fairly straightforward application of Cauchy's integral formula for $f'$ shows that
if (\ref{eq:burnskrantz1}) holds nontangentially, then  
$$  f^h(z)= 1+o\left(|1-z|^2\right)   \quad  \text{ as } z \to 1 \text{ nontangentially},$$ 
see Section \ref{sec:burnskrantz} for the details. Hence
Theorem \ref{thm:maindisk} gives  $f \in \Aut(\D)$, and it now follows easily
that  (\ref{eq:burnskrantz1}) implies
$f(z) \equiv z$.
\end{remark}

In order to state the generalization of Theorem \ref{thm:maindisk} to higher dimension, we first remark that for $z\in \D$,
\[
f^h(z)=\frac{k_\D(f(z);f'(z))}{k_\D(z;1)},
\]
 where $k_\D(z;v)=\frac{|v|}{1-|z|^2}$ denotes the infinitesimal hyperbolic (Poincar\'e) metric of $\D$. 
 
Then, condition~\eqref{eq:limit} can  be rewritten as
\[
k_\D(f(z_n);f'(z_n))=k_\D(z_n;1)+k_\D(z_n;1)o(\delta^2_\D(z_n)),
\]
where, if $D\subset \C^N$ is a domain and $w\in D$, we let
\[
\delta_D(w):=\inf\{|z-w|: z\in \C^N\setminus D\}.
\]
Taking into account that $k_\D(z_n;1)\sim 1/\delta_\D(z_n)$, it follows that \eqref{eq:limit}  is equivalent to 
\begin{equation*}
k_\D(f(z_n);f'(z_n))=k_\D(z_n;1)+o(\delta_\D(z_n)).
\end{equation*}
This is the condition that can be generalized to higher dimension replacing $k_\D$ with the (infinitesimal) Kobayashi metric $k_D$ of a domain $D\subset\C^N$. 

We state here the result for holomorphic selfmaps of the unit ball  $\B^N:=\{z \in \C^N \, : \, |z|<1\}$
of $\C^N$ with respect to the Euclidean norm $|z|:=(|z_1|^2+\ldots+|z_N|^2)^{1/2}$ for
$z=(z_1,\ldots, z_N) \in \C^N$. 

Recall that, the complex tangent space $T_{p}^\C\partial \B^N$ of $\partial \B^N$ at $p\in\partial\B^N$ is defined as
\[
T_{p}^\C\partial \B^N:=\{v\in \C^N: \langle v, p\rangle=0\},
\]
where $\langle v, p\rangle=\sum_{j=1}^N v_j\overline{p_j}$ is the standard Hermitian product in $\C^N$. In particular, if $e_1=(1,0,\ldots,0)$, we have $T_{e_1}^\C\partial \B^N=\{v\in\C^N: v_1=0\}$.

Also,  we denote by $\Pi_p(v)$ the orthogonal projection of $v$ on $T_{p}^\C\partial \B^N$. Finally, if $z\in \B^N\setminus\{0\}$, we let $\pi(z)\in\partial \B^N$ be the closest point. With this notation, we can state the following:

\begin{theorem}[The strong form of the Schwarz--Pick lemma at the boundary of the unit ball] \label{thm:mainball}
Let $F$ be a holomorphic selfmap of the open unit ball $\B^N$ of $\C^N$.   Then $F$ is a biholomorphism if and only if  for every  $v\in \C^N$, $|v|=1$, $v_1\neq 0$
\begin{itemize}
\item[a)] if $\{z_k\}\subset \B^N\cap(\C v+e_1)$ is a sequence converging to $e_1$ non-tangentially such that $\{F(z_k)\}$ has no accumulation points in $\B^N$ then 
\begin{equation*}
\limsup_{k\to \infty}|\Pi_{\pi(F(z_k))}(dF_{z_k}(v))|<+\infty,
\end{equation*}
\item[b)] and for $(\B^N\cap(\C v+e_1))\ni z\to e_1$ non-tangentially,
\begin{equation*}
k_{\B^N}(F(z); dF_{z}(v))=k_{\B^N}(z;v)+o\left(\delta_{\B^N}(z)\right). 
\end{equation*}
\end{itemize}
\end{theorem}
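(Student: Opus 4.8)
The plan is to treat the two implications separately. The forward one is routine: if $F$ is biholomorphic then $F\in\Aut(\B^N)$, so $F$ is an isometry for the Kobayashi metric ($k_{\B^N}(F(z);dF_z v)=k_{\B^N}(z;v)$ for all $z,v$) and extends to a smooth CR-diffeomorphism of $\overline{\B^N}$; hence (2)(b) holds with vanishing error term, (1) holds because $F(z_n)\to F(e_1)\in\partial\B^N$, and in (2)(a) one has $\pi(F(z_k))\to F(e_1)$ and $dF_{z_k}v\to dF_{e_1}v$, so the $\limsup$ is at most $|dF_{e_1}v|<+\infty$. For the converse I would first note that $F$ is nonconstant --- otherwise (2)(b) would read $0=k_{\B^N}(z;v)+o(\delta_{\B^N}(z))$ while $k_{\B^N}(z;v)\to+\infty$ as $z\to e_1$ --- and that this same blow-up forces $F(z)\to\partial\B^N$ whenever $z\to e_1$ non-tangentially inside a slice $S_v:=\B^N\cap(\C v+e_1)$. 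Combining this with (1) and a Lindel\"of--type principle in $\C^N$ gives $F$ a single restricted boundary value $F(e_1)=:p\in\partial\B^N$, with finite positive boundary dilation at $e_1$ by the Julia--Wolff--Carath\'eodory theory for $\B^N$; left-composing $F$ with an automorphism of $\B^N$ carrying $p$ to $e_1$ (which preserves all hypotheses, automorphisms being Kobayashi isometries and smooth CR-diffeomorphisms of $\overline{\B^N}$), we may assume $p=e_1$.

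The heart of the proof is a reduction to the unit disk along the pencil of complex geodesics of $\B^N$ ending at $e_1$. Fix $v$ with $|v|=1$ and $v_1\neq0$; then $S_v$ is an affine disc with $e_1$ on its boundary circle, transversal to $\partial\B^N$ at $e_1$ precisely because $v_1\neq0$, and I let $\gamma_v\colon\D\to\B^N$ be the associated complex geodesic with $\gamma_v(1)=e_1$. The disc map $F\circ\gamma_v\colon\D\to\B^N$ then has restricted limit $e_1$ at $\zeta=1$, so the Julia--Wolff--Carath\'eodory theorem yields a complex geodesic $\delta_v$ of $\B^N$ with $\delta_v(1)=e_1$ tangent to $F\circ\gamma_v$ at $\zeta=1$, together with its left inverse, the Lempert retraction $\rho_v\colon\B^N\to\D$, satisfying $\rho_v\circ\delta_v=\id_\D$ and $\rho_v(e_1)=1$. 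Here condition (2)(a) is essential: the finiteness of $\limsup_k|\Pi_{\pi(F(z_k))}(dF_{z_k}v)|$ guarantees that $F\circ\gamma_v$ is not asymptotically tangential to $\partial\B^N$, so that $\delta_v$ really exists and $\rho_v$ is non-degenerate. Put $g_v:=\rho_v\circ F\circ\gamma_v\colon\D\to\D$; then $g_v(1)=1$.

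Using that $\rho_v$ is distance non-increasing and infinitesimally isometric along $\delta_v$, that $\gamma_v$ is a complex geodesic, and the Julia--Wolff--Carath\'eodory relations tying $F\circ\gamma_v$ to $\delta_v$ near $\zeta=1$, I would show that condition (2)(b) translates into $g_v^h(\zeta_k)=1+o\bigl((1-|\zeta_k|)^2\bigr)$ along $\zeta_k:=\gamma_v^{-1}(z_k)\to1$. Theorem \ref{thm:maindisk}, in its sequential form --- which is what is genuinely needed, since the target direction, hence $\rho_v$, is known only asymptotically along $(z_k)$, so that in practice one runs the argument with a sequence of auxiliary discs $\rho_{v,k}\circ F\circ\gamma_v$ and lets $k\to\infty$ --- then gives $g_v\in\Aut(\D)$. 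By the uniqueness (``projection device'') part of the theory of complex geodesics, a holomorphic disc whose composition with the left inverse of a complex geodesic is an automorphism of $\D$ is itself a complex geodesic; hence $F\circ\gamma_v$ is a complex geodesic of $\B^N$ ending at $e_1$, that is, $F$ maps the slice $S_v$ onto a complex geodesic, isometrically for $k_{\B^N}$.

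Finally I would globalize. Every $w\in\overline{\B^N}\setminus\{e_1\}$ lies on a transversal complex line through $e_1$, and complex geodesics of the strongly convex domain $\B^N$ extend smoothly to $\overline{\D}$; together with the boundary regularity of the Lempert foliation at $e_1$, the previous paragraph then shows that $F$ extends continuously to $\overline{\B^N}$ and carries the boundary circle of each $S_v$ into that of $\delta_v$, so $F(\partial\B^N)\subset\partial\B^N$. Thus $F$ is a proper holomorphic selfmap of $\B^N$: for $N\geq2$ Alexander's theorem forces $F\in\Aut(\B^N)$, and for $N=1$ condition (2) is literally the hypothesis of Theorem \ref{thm:maindisk}, which gives $F\in\Aut(\D)$. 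I expect the two genuinely delicate points to be (i) the bookkeeping of the error term through the Lempert retraction --- deducing for $g_v$ the precise $o\bigl((1-|\zeta_k|)^2\bigr)$ hypothesis of Theorem \ref{thm:maindisk} from (2)(b), which is also where the sequential one--dimensional result is indispensable --- and (ii) the boundary regularity of the Lempert foliation at $e_1$ needed to conclude that $F$ extends continuously to $\overline{\B^N}$ with $F(\partial\B^N)\subset\partial\B^N$.
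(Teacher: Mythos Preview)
Your overall architecture is the same as the paper's: slice by the pencil of complex geodesics through $e_1$, push condition (2)(b) through a Lempert left inverse to obtain the one--dimensional hypothesis $g^h(\zeta_n)=1+o((1-|\zeta_n|)^2)$, apply the \emph{sequential} boundary Schwarz--Pick lemma (Corollary~\ref{thm:schwarzpicksequence}) to conclude that $F\circ\gamma_v$ is a complex geodesic, and finish by properness plus a proper-map rigidity theorem. You also correctly identify that the sequential form is essential and that hypothesis (2)(a) is what prevents the target direction from becoming complex tangential.

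The paper's execution differs from yours in two places, and in both it is simpler. First, it does not invoke Lindel\"of or Julia--Wolff--Carath\'eodory to fix a single boundary value $p$ and a single target geodesic $\delta_v$ in advance; instead it takes, for each $n$, a complex geodesic $\eta_n$ of the target through $F(\varphi(r_n))$ in the direction $dF_{\varphi(r_n)}(\varphi'(r_n))$, uses (2)(a) (and Aladro-type estimates) to see that the normalized directions stay transversal, and then applies a compactness result for complex geodesics (Proposition~\ref{geo-converge}) to extract a limiting geodesic $\eta$ and limiting disc map $f=\eta^{-1}\circ\rho\circ F\circ\varphi$. This is exactly the ``sequence of auxiliary discs'' you mention parenthetically, and it makes the JWC step unnecessary. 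Second, for properness the paper does not try to extend $F$ continuously to $\overline{\B^N}$; it argues by contradiction: if $z_k\to\partial D$ with $F(z_k)$ relatively compact, take geodesics $\varphi_k$ joining $z_k$ to $p$, use hypothesis (1) together with Proposition~\ref{geo-converge} to see the $\varphi_k$ do not collapse, and then the already-proved isometry $K_\D=K_{D'}(F\circ\varphi_k,\cdot)$ forces $K_D(\varphi_k(t_k),\varphi_k(0))$ to stay bounded while $\varphi_k(t_k)\to\partial D$, a contradiction. This avoids your ``boundary regularity of the Lempert foliation'' step entirely. Finally, the paper concludes with Diederich--Forn\ae ss rather than Alexander, though for $\B^N$ either suffices.
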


Some remarks about this statement are in order. 
First, note that if $N=1$ then $T_{e_1}^\C\partial \D=\{0\}$, hence  hypothesis  a) holds trivially, and Theorem~\ref{thm:mainball} reduces to (a version of) Theorem~\ref{thm:maindisk}. These extra hypothesis  a) reflects the fact that in higher dimension there is in general little control on the complex tangential directions. 

We remark that, in the manuscript originally submitted, the previous theorem contained an extra technical hypothesis on the behaviour of $F$ along sequences converging complex tangentially to $e_1$. Indeed, the proof of such a result is based on a similar theorem for strongly convex domains (see Theorem~\ref{thm:convex-rigidity}). Our proof, in the context of the unit ball, shows that   $F$ maps conformally and isometrically (with respect to the Kobayashi metric of the ball) the intersection of $\B^N$ with any affine line passing through $e_1$ onto the intersection of $\B^N$ with an affine line.  After this manuscript was submitted, the first named author together with L. Kosinski and W. Zwonek \cite{BKZ}, using an involved scaling method, proved that a holomorphic map between two smooth bounded strongly convex domains $D, D'$ which maps complex geodesics of $D$ whose closure contain a given boundary point to complex geodesics of $D'$ is indeed a biholomorphism. With this result at hand we could remove the  technical hypothesis stated in the original version of this work. 

  One can replace hypothesis b) by assuming that for every $v$ there exists a sequence $\{z_n\}$ contained in $(\B^N\cap(\C v+e_1))$ and converging non-tangentially to $e_1$ for which the conclusion holds. Theorem~\ref{thm:mainball} (whose proof is contained in Section~\ref{sec:8}) is a special case of Theorem~\ref{thm:convex-rigidity}, where the result is extended to holomorphic maps between strongly convex domains with smooth boundary. 

One can replace hypothesis a) by assuming a boundedness condition
on  $dF_z$ as $z\to e_1$. We state here the result with this condition for general strongly convex domains (such a result is a direct consequence of Theorem~\ref{thm:convex-rigidity}, where hypothesis a) is relaxed and substituted with one only dealing with complex tangential directions):

\begin{theorem}\label{thm:convex-rigidity-bis-intro}
Let $D, D'\subset \C^N$ be two bounded strongly convex domains with smooth
boundary, let $F:D\to D'$ be holomorphic, and let $p\in \partial D$. Then $F$ is a biholomorphism if and only if 
\begin{itemize}
\item[a)] there exists $C>0$ such that $\limsup_{z\to p}|dF_z(v)|\leq C$ for all $v\in \C^N$ with $|v|=1$,
\item[b)] and 
\begin{equation*}
k_{D'}(F(z); dF_{z}(w))=k_D(z;w)+o\left(\delta_D(z)\right), 
\end{equation*}
 when $z\to p$ non-tangentially and locally uniformly in $w\in \C^N\setminus T_p^\C\partial D$, $|w|=1$.
\end{itemize}
  \end{theorem}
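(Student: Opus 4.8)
\emph{Proof sketch.}
The implication ``$F$ biholomorphic $\Rightarrow$ (1) and (2)'' is elementary. Since $D,D'$ are bounded and strongly convex, they are strongly pseudoconvex with smooth boundary, so by Fefferman's theorem $F$ extends to a diffeomorphism of $\overline D$ onto $\overline{D'}$; in particular $\|dF_z\|$ is bounded on $\overline D$, which gives~(1), and $F$ is a global isometry for the Kobayashi metric, so $k_{D'}(F(z);dF_z(w))=k_D(z;w)$ and~(2) holds with vanishing error term. Conversely, assume~(1) and~(2). On a small convex neighbourhood $U$ of $p$ the bound $\limsup_{z\to p}\|dF_z\|\le C$ makes $F$ Lipschitz on $U\cap D$, so $F$ extends continuously to $U\cap\overline D$; write $q:=F(p)$. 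If $q\in D'$, then for a fixed $w\notin T^\C_p\partial D$ the left side of~(2) stays bounded as $z\to p$ (because $F(z)$ remains in a compact subset of $D'$ and $dF_z(w)$ is bounded) while $k_D(z;w)\asymp 1/\delta_D(z)\to\infty$, a contradiction; hence $q\in\partial D'$.

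The core of the proof is a reduction to the one--dimensional boundary Schwarz--Pick lemma by means of Lempert's theory of complex geodesics. Recall that for a bounded strongly convex domain with smooth boundary: through every interior point and every boundary point there passes a unique complex geodesic; complex geodesics extend smoothly to $\overline{\D}$ and land transversally, so that $\delta_D(\varphi(r))\asymp 1-r$ as $r\to1$; any holomorphic disk into the domain that admits a holomorphic left inverse is a complex geodesic; and each complex geodesic $\psi$ of $D'$ carries a holomorphic left inverse $\tilde\rho_\psi:D'\to\D$ with $\tilde\rho_\psi\circ\psi=\mathrm{id}_\D$. Fix a complex geodesic $\varphi:\D\to D$ with $\varphi(1)=p$ and set $h:=F\circ\varphi:\D\to D'$, which extends continuously near $1$ with $h(1)=q\in\partial D'$. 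Choosing a complex geodesic $\psi$ of $D'$ with $\psi(1)=q$ adapted to $h$ at $q$, put $g:=\tilde\rho_\psi\circ h:\D\to\D$. Using the precise boundary asymptotics of the Kobayashi metric of a strongly convex domain (Graham, Ma), which express $k_D(z;v)$ through $\delta_D(z)$, the normal component of $v$ and the tangential Levi form, together with~(2) applied along $\varphi$ with $w=\varphi'(r)/|\varphi'(r)|$, one verifies that $g$ has radial limit $1$ at $1$ and that along a suitable sequence $r_n\to1$
\[
g^h(r_n)=1+o\!\left((1-r_n)^2\right).
\]
The sequential form of Theorem~\ref{thm:maindisk} proved in Section~\ref{sec:4} then yields $g\in\Aut(\D)$.

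The main obstacle is exactly this translation. The bound $g^h\le1$ is automatic from the Schwarz--Pick lemma; the difficulty is the matching lower bound, i.e.\ showing that post--composition with $\tilde\rho_\psi$ does not destroy the asymptotic isometry at the quadratic level. This requires choosing $\psi$ so as to osculate $h$ at the boundary point $q$ to sufficiently high order and controlling the contraction of $\tilde\rho_\psi$ along $(h(r),h'(r))$: near $q$ the tangent $h'(r)$ is dominated by its normal component --- otherwise $k_{D'}(h(r);h'(r))$ would grow like $\delta_{D'}(h(r))^{-1/2}$, which is inconsistent with~(2) along the transversally landing geodesic $\varphi$ --- so the part of $h'(r)$ annihilated by $d\tilde\rho_\psi$ is of higher order; quantifying this to second order, while dealing with the limited boundary regularity of $h$ (only Lipschitz up to the boundary), is where strong convexity, the smoothness and transversality of Lempert geodesics, and the fine boundary behaviour of the Lempert projection $\tilde\rho_\psi$ are all needed. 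This is also why the sequential rather than pointwise form of the one--dimensional lemma is indispensable.

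It remains to upgrade ``$g\in\Aut(\D)$ for every complex geodesic $\varphi$ landing at $p$'' to ``$F$ is a biholomorphism''. Since $\tilde\rho_\psi\circ(F\circ\varphi)=g\in\Aut(\D)$, the disk $F\circ\varphi\circ g^{-1}$ admits $\tilde\rho_\psi$ as a holomorphic left inverse, hence is a complex geodesic of $D'$ landing at $q$; consequently $F$ maps every complex geodesic of $D$ landing at $p$ biholomorphically \emph{onto} a complex geodesic of $D'$ landing at $q$. The complex geodesics landing at $p$ foliate $D$ and those landing at $q$ foliate $D'$. Moreover, near $p$ hypothesis~(2) holds in all transverse directions, which forces $dF_z$ to be an asymptotic metric isometry on transverse vectors as $z\to p$ non-tangentially; in particular $dF_z$ is invertible on a non-tangential approach region at $p$, so $\det dF\not\equiv0$ and $F$ restricts near $p$ to a homeomorphism of a neighbourhood of $p$ in $\overline D$ onto a neighbourhood of $q$ in $\overline{D'}$, biholomorphic in the interior. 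Combining this local biholomorphy with the geodesic--to--geodesic property and the uniqueness of the complex geodesic through a given interior point and $q$, one concludes first (by analytic continuation of geodesics) that $F$ is injective on all of $D$, and then that $F(D)=D'$, since every complex geodesic of $D'$ landing at $q$ lies in the image of $F$. An injective holomorphic map of $D$ onto $D'$ is a biholomorphism, which completes the proof.
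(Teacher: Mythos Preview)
Your overall architecture is the same as the paper's: reduce to showing that $F$ sends every complex geodesic of $D$ landing at $p$ isometrically onto a complex geodesic of $D'$, then deduce that $F$ is a biholomorphism. However, there are two genuine gaps, one in each of the two main steps.

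\textbf{The translation to a one--dimensional Schwarz--Pick problem.} With a \emph{fixed} complex geodesic $\psi$ and its left inverse $\tilde\rho_\psi$, the composition $g=\tilde\rho_\psi\circ h$ will in general \emph{not} satisfy $g^h(r_n)=1+o((1-r_n)^2)$: the Lempert projection contracts the Kobayashi metric strictly unless the data $(h(r_n),h'(r_n))$ already lie on $\psi(\D)$, and controlling this contraction to order $(1-r)^2$ from mere osculation of $\psi$ and $h$ at $q$ is exactly what you have not done (your paragraph acknowledges this as ``the main obstacle'' but does not resolve it). The paper avoids this difficulty entirely by a different device: for each $n$ it takes a complex geodesic $\eta_n$ of $D'$ through the point $F(\varphi(r_n))$ in the direction $dF_{\varphi(r_n)}(\varphi'(r_n))$, together with its Lempert projection $\rho_n$. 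Then $\rho_n$ fixes both $F(\varphi(r_n))$ and $dF_{\varphi(r_n)}(\varphi'(r_n))$, so for $f_n:=\eta_n^{-1}\circ\rho_n\circ F\circ\varphi$ one gets the \emph{exact} identity
\[
f_n^h(r_n)=\frac{k_{D'}(F(\varphi(r_n));dF_{\varphi(r_n)}(\varphi'(r_n)))}{k_D(\varphi(r_n);\varphi'(r_n))},
\]
and hypothesis~(2) immediately gives $f_n^h(r_n)=1+o((1-r_n)^2)$. The price is that $f_n$ now depends on $n$; one shows via Aladro--type estimates and hypothesis~(1) that the limit direction is transversal at $q$, so by the stability of complex geodesics and Lempert projections (Proposition~\ref{geo-converge}) $\eta_n\to\eta$ and $f_n\to f$ locally uniformly. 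This is precisely why the \emph{sequential} Schwarz--Pick lemma (Corollary~\ref{thm:schwarzpicksequence}) is needed, not the single--map Theorem~\ref{thm:maindisk}: with your fixed $g$ the sequential lemma would be unnecessary, but you do not have the required estimate for $g$.

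\textbf{From ``geodesics to geodesics'' to ``biholomorphism''.} Your endgame asserts that $F$ is a local homeomorphism near $p$, then injective by ``analytic continuation of geodesics'', then surjective because ``every complex geodesic of $D'$ landing at $q$ lies in the image of $F$''. None of these is justified: $\det dF\not\equiv 0$ does not give local injectivity at a boundary point, and you have established no map from geodesics of $D'$ at $q$ back to geodesics of $D$ at $p$. The paper instead proves that $F$ is \emph{proper}: if $z_k\to\xi\in\partial D$ with $F(z_k)$ relatively compact in $D'$, one runs geodesics $\varphi_k$ through $z_k$ and $p$; since $F\circ\varphi_k$ are complex geodesics, $k_D(z_k;\cdot)=k_{D'}(F(z_k);dF_{z_k}(\cdot))$ along them, and the right side stays bounded by hypothesis~(1) while the left blows up. Properness plus Diederich--Forn{\ae}ss then yields that $F$ is a biholomorphism.
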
  

As a direct consequence (just apply the previous result to $F(z)\equiv z$) we have the following interesting fact:

\begin{corollary}
Let $D, D'\subset\C^N$ be two bounded strongly convex domains with smooth boundary. Assume that $D\subseteq D'$.   If there exists $p\in\partial D$ such that
\[
k_{D'}(z;v)=k_D(z;v)+o\left(\delta_D(z)\right),
\]
for $z\to p$ non-tangentially, locally uniformly in $v\in \C^N\setminus T_p^\C\partial D$, $|v|=1$, then $D=D'$.
\end{corollary}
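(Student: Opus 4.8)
The plan is to deduce the Corollary directly from Theorem~\ref{thm:convex-rigidity-bis-intro} by applying it to the inclusion map. Since $D\subseteq D'$, the map $F\colon D\to D'$, $F(z)=z$, is a well-defined holomorphic map between two bounded strongly convex domains with smooth boundary, and $p\in\partial D$ by hypothesis; so it remains only to verify the two conditions of Theorem~\ref{thm:convex-rigidity-bis-intro} for this $F$.

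First I would check condition~(1): since $dF_z=\mathrm{id}$ for every $z\in D$, we get $|dF_z(v)|=|v|=1$ for every unit vector $v\in\C^N$, whence $\limsup_{z\to p}|dF_z(v)|\le 1$ and one may take $C=1$. Next I would check condition~(2): again $dF_z=\mathrm{id}$ gives $k_{D'}(F(z);dF_z(w))=k_{D'}(z;w)$, so the asymptotic identity demanded there, namely
\[
k_{D'}(z;w)=k_D(z;w)+o\!\left(\delta_D(z)\right)\qquad\text{as }z\to p\text{ non-tangentially},
\]
locally uniformly in $w\in\C^N\setminus T_p^\C\partial D$ with $|w|=1$, is exactly the hypothesis of the Corollary (with $w$ renamed $v$).

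With both conditions in hand, Theorem~\ref{thm:convex-rigidity-bis-intro} tells us that $F$ is a biholomorphism of $D$ onto $D'$; in particular $F$ is onto, so $D'=F(D)=D$, as claimed.

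Regarding difficulty: there is essentially no obstacle here, since all of the analytic work is already contained in Theorem~\ref{thm:convex-rigidity-bis-intro}. The only thing meriting care is checking that the quantifiers in the Corollary's hypothesis --- non-tangential approach to $p$, local uniformity over unit vectors transverse to the complex tangent space $T_p^\C\partial D$ --- match verbatim those in condition~(2) of that theorem, which they do.
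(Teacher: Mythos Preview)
Your proof is correct and matches the paper's own argument exactly: the paper states the corollary as ``a direct consequence (just apply the previous result to $F(z)=z$)'', and you have carried out precisely that verification. There is nothing to add.
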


\subsection{The Ahlfors lemma at the boundary}  
In contrast to the Burns--Krantz result, Theorem \ref{thm:maindisk} is
manifestly a conformally invariant statement and is therefore perhaps 
best understood as a special case of the following boundary version of the  Ahlfors--Schwarz lemma \cite{Ahlfors1938}.

\begin{theorem}[The strong form of the Ahlfors--Schwarz lemma at the boundary] \label{thm:ahlforsdisk}
Let $\lambda(z) \, |dz|$ be a conformal pseudometric on $\D$ with curvature
$\kappa_{\lambda} \le -4$. Suppose that
$$ \frac{\lambda(z_n)}{\lambda_{\D}(z_n)}=1+ o\left((1-|z_n|)^2\right)$$  
for some sequence $(z_n)$ in $\D$ such that $|z_n| \to 1$. Then
$$ \lambda=\lambda_{\D}\, .$$
\end{theorem}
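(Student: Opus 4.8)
The plan is to reduce the theorem to the classical \emph{interior} rigidity of Ahlfors' lemma together with an explicit one-sided barrier near the unit circle. Write $\lambda=\lambda_{\D}\,e^{-u}$, i.e. $u:=\log(\lambda_{\D}/\lambda)\colon\D\to[0,+\infty]$. The classical Ahlfors--Schwarz lemma --- the implication $\kappa_\lambda\le -4\Rightarrow\lambda\le\lambda_{\D}$ recalled in \eqref{eq:intro1} --- gives $u\ge 0$ on $\D$. Using $\Delta\log\lambda_{\D}=4\lambda_{\D}^2$ together with the fact that $\kappa_\lambda\le -4$ means $\Delta\log\lambda\ge 4\lambda^2$ in the sense of distributions, I would first record the differential inequality
\[
\Delta u\ \le\ 4\lambda_{\D}^2-4\lambda^2\ =\ 4\lambda_{\D}^2\bigl(1-e^{-2u}\bigr)\qquad\text{on }\D,
\]
again distributionally; the negative point masses that $\Delta u$ carries at the zeros of $\lambda$ only reinforce it. A routine expansion turns the hypothesis into $u(z_n)=o\bigl((1-|z_n|)^2\bigr)$.

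Next I would distinguish two cases. If $\lambda(z_0)=\lambda_{\D}(z_0)$ for some interior point $z_0$, i.e. $u(z_0)=0$, then the strong interior form of Ahlfors' lemma (Heins \cite{Heins1962}) immediately yields $\lambda\equiv\lambda_{\D}$. It therefore remains to rule out the case $u>0$ on all of $\D$, and this is where the boundary hypothesis enters. The idea is to build, on an annulus $R=\{r_0<|z|<1\}$, a positive subsolution $\psi$ of the nonlinear operator $v\mapsto\Delta v-4\lambda_{\D}^2(1-e^{-2v})$ with $\psi\asymp(1-|z|^2)^2$ and $\psi\to 0$ at $\partial\D$, and then to compare. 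Concretely I would try the two--term ansatz
\[
\psi(z)\ =\ \eps\,(1-|z|^2)^2+C\eps\,(1-|z|^2)^3
\]
with $C$ slightly larger than $1$: a short computation shows that
$\Delta\psi-4\lambda_{\D}^2(1-e^{-2\psi})=16(C-1)\eps\,(1-|z|^2)+O\bigl((1-|z|^2)^2\bigr)$ as $|z|\to1$, so $\psi$ is a subsolution on a suitable annulus $R$. Since $u>0$ on the compact circle $\{|z|=r_0\}$, after shrinking $\eps$ we may assume $u\ge\psi$ there, while $u\ge 0=\lim\psi$ at $\partial\D$; a standard maximum--principle comparison (on the open set $\{\psi>u\}$ the function $\psi-u$ is subharmonic because $u<\psi$ forces $e^{-2u}>e^{-2\psi}$, and it is bounded above and $\le0$ on the relative boundary) then gives $u\ge\psi$ throughout $R$. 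Hence for all large $n$ one has $u(z_n)\ge\psi(z_n)\ge\eps\,(1-|z_n|)^2$, contradicting $u(z_n)=o\bigl((1-|z_n|)^2\bigr)$. This forces the first case, so $\lambda\equiv\lambda_{\D}$.

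The main obstacle --- and the reason the theorem is not a one--line consequence of the interior result --- is the construction of a barrier with the \emph{sharp} boundary exponent $2$. Using only the crude linear bound $1-e^{-2u}\le 2u$, the best radial subsolution of $\Delta v=8\lambda_{\D}^2v$ is $(1-|z|^2)^{2+\delta}$ with $\delta>0$, which merely yields $u(z_n)\gtrsim(1-|z_n|)^{2+\delta}=o\bigl((1-|z_n|)^2\bigr)$ and no contradiction. One is thus forced to keep the genuinely nonlinear term and to exploit its second--order slack, which is precisely what the $(1-|z|^2)^3$ correction with $C>1$ is designed to capture. This explains at once why the exponent $2$ is critical and why ``$o$'' cannot be relaxed to ``$O$'' (cf. the example $f(z)=z^2$, for which $1-f^h\asymp(1-|z|)^2$).

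Finally I would remark that this barrier estimate is a special instance of the boundary Hopf lemma for conformal pseudometrics contained in Theorem~\ref{thm:ahlforsdiskgeneral2}, that the whole statement is subsumed by the general variable--curvature boundary rigidity theorems of Sections~\ref{sec:3}--\ref{sec:4} --- where, for non--constant curvature or for sequences of metrics, no explicit barrier is available and the Harnack--type half of Theorem~\ref{thm:ahlforsdiskgeneral2} takes over --- and that Theorem~\ref{thm:maindisk} is recovered by applying the above to the pullback pseudometric $\lambda=\lambda_{\D}(f)\,|f'|$ of a holomorphic selfmap $f$ of $\D$.
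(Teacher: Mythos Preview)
Your argument is correct and, in essence, very close to the paper's own proof: both reduce the statement to a comparison on an annulus against an explicit barrier that vanishes like $(1-|z|^2)^2$ at $\partial\D$. The paper works with $u=\log(\lambda/\mu)\le 0$, derives the \emph{linear} inequality $\Delta u\ge \frac{2c}{(1-|z|^2)^2}\,u$ (here $c=4$) via $e^{2y}-1\ge 2y$, and uses the barrier $v_r(z)=(1-|z|^2)^{c/2}e^{(1-|z|^2)/r^2}$; your version uses the opposite sign convention and the polynomial correction $\eps(1-|z|^2)^2+C\eps(1-|z|^2)^3$. Both comparisons conclude in the same way.

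Your commentary, however, misidentifies where the work lies. You assert that the crude linear bound $1-e^{-2u}\le 2u$ only yields subsolutions $(1-|z|^2)^{2+\delta}$ and that one is ``forced to keep the genuinely nonlinear term''. This is not so: the paper achieves the sharp exponent $2$ using \emph{only} the linearized inequality, the exponential factor $e^{(1-|z|^2)/r^2}$ supplying exactly the lower-order correction you build in by hand. In fact your own polynomial barrier is already a subsolution of the \emph{linear} operator: a direct computation gives $\Delta\psi-8\lambda_{\D}^2\psi=16(C-1)\eps(1-|z|^2)+O\bigl((1-|z|^2)^2\bigr)$, the same leading term as in your nonlinear expansion, since $1-e^{-2\psi}=2\psi+O(\psi^2)$ and $\psi^2=O\bigl((1-|z|^2)^4\bigr)$. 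So the nonlinear ``second-order slack'' plays no role; what fails is only the \emph{pure power} ansatz $(1-|z|^2)^2$, and any mild correction (polynomial or exponential) repairs it already at the linear level. The payoff of the paper's formulation is that, being linear, it generalizes verbatim to variable pinched curvature $-c\le\kappa\le-4$, producing the exponent $c/2$ of Theorem~\ref{thm:ahlforsdiskgeneral2}, whereas your nonlinear comparison is tied to $\mu=\lambda_{\D}$.
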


By a conformal pseudometric $\lambda(z) \, |dz|$ on a domain $G$ in $\C$ we mean a continuous nonnegative function $\lambda : G \to \R$ such that the curvature
\begin{equation} \label{def:curvature}
 \kappa_{\lambda}(z)=-\frac{\Delta (\log \lambda)(z)}{\lambda(z)^2} \, 
\end{equation}
of $\lambda(z)\,|dz|$ is defined for all $z \in G$ with $\lambda(z)>0$. For
simplicity, we assume throughout this paper that $\lambda$ is of class $C^2$
in $\{z \in G \, : \, \lambda(z)>0\}$, so $\Delta$ in (\ref{def:curvature}) is
the standard Laplacian.
Expressions such as ``$\kappa \le -4$'' will always mean that
$\kappa(z) \le -4$ for all $z \in G$ with $\lambda(z)>0$. If $\lambda$ is
strictly positive on $G$ we call $\lambda(z) \, |dz|$ a conformal metric.
Note that the Gauss curvature of the Poincar\'e metric $\lambda_{\D}(z) \, |dz|$
is constant $-4$. Hence, Theorem \ref{thm:maindisk} follows from Theorem
\ref{thm:ahlforsdisk} applied to the conformal pseudometric
$$ \lambda(z) \, |dz|:=(f^*\lambda_{\D})(z) \, |dz|:=\frac{|f'(z)|}{1-|f(z)|^2} \, |dz| \, ,$$
by noting $\kappa_{\lambda}=-4$ and using the elementary fact that $f^h(z)=\lambda(z)/\lambda_{\D}(z)=1$
if and only $f \in \Aut(\D)$.

\begin{remark}[The Ahlfors--Schwarz lemma and the strong form of the
  Ahlfors--Schwarz lemma] \label{rem:as}
Recall that the Ahlfors--Schwarz lemma
\cite{Ahlfors1938} says that
$\lambda(z) \le \lambda_{\D}(z)$ for all $z \in \D$ and for any conformal pseudometric $\lambda(z) \, |dz|$
on $\D$ with curvature  bounded above by $-4$. The ``strong'' form of the
Ahlfors--Schwarz lemma (\cite{Minda1987}) guarantees that if in addition
$\lambda(z_0)=\lambda_{\D}(z_0)$  at some
\textit{interior} point $z_0 \in \D$, then $\lambda \equiv \lambda_{\D}$, see
J{\o}rgensen \cite{Jorgensen1956} and Heins \cite{Heins1962} as well as \cite{Royden1986,Minda1987,Chen} for different
proofs. Hence Theorem \ref{thm:ahlforsdisk} can be viewed as a boundary
version of the strong form of the Ahlfors--Schwarz lemma.
\end{remark}

\begin{example} \label{exa:2}
For $\eps>0$ let $ f_{\eps}(z):=z-\eps (z-1)^3$. Then $f_{\eps}(\D) \subset
\D$ if and only if $\eps \le 1/4$, and $f_{\eps}$ is a (locally) univalent
selfmap of $\D$ if and only if $\eps \le 1/12$. Hence $$ \lambda_{\eps}(z):=\frac{|f_{\eps}'(z)|}{1-|f_{\eps}(z)|^2}$$
defines a conformal metric for $0 <\eps\le 1/12$ and a conformal pseudometric
for $1/12<\eps \le 1/4$ on $\D$ such that $\kappa_{\lambda_{\eps}}\equiv
-4$. In addition, 
 a  computation  shows
$$  \frac{\lambda_{\eps}(z)}{\lambda_{\D}(z)}=f^h_{\eps}(z)=1-\eps  \left(
  1-|z|\right)^2+o \left((1-|z|)^2\right) \quad \text{ as } z \to 1 \text{ radially} \, .$$
\hide{$$\left( \left(1-|z|^2 \right) \frac{|f'_{\eps}(z)|}{1-|f_{\eps}(z)|^2}-1
\right) \frac{1}{(1-|z|^2)^2}  \to -\frac{\eps}{2} \quad \text{ as } z \to 1 \text{ radially} \, .$$
Hence 
$$ \lambda_{\eps}(z):=\frac{|f_{\eps}'(z)|}{1-|f_{\eps}(z)|^2}$$
defines a conformal metric for $0 <\eps\le 1/12$ and a conformal pseudometric
for $1/12<\eps \le 1/4$ on $\D$ such that $\kappa_{\lambda_{\eps}}\equiv -4$ and  
$$ \frac{\lambda_{\eps}(z)}{\lambda_{\D}(z)}=1+ O\left((1-|z|)^2\right) $$  
as $z \to 1$ radially in $\D$.}This implies that  Theorem
\ref{thm:ahlforsdisk} as well as Theorem \ref{thm:maindisk} are best
possible  even for conformal \textit{metrics} and \textit{univalent} selfmaps of $\D$.
\end{example}

Theorem \ref{thm:ahlforsdisk} is ``ready to use''  for establishing in a standard way a general boundary rigidity result for holomorphic maps from the disk into hyperbolic Riemann surfaces. 

\begin{corollary} \label{cor:1}
Let $R$ be a hyperbolic Riemann surface and let $\mu(z) \, |dz|$ be a conformal pseudometric on $R$ with curvature
$\kappa_{\mu} \le -4$. Suppose that $f : \D \to R$ is a holomorphic
mapping such that
$$ \frac{\mu(f(z_n))\,|f'(z_n)|}{\lambda_{\D}(z_n)}=1+ o\left((1-|z_n|)^2\right)$$  
for some sequence $(z_n)$ in $\D$ with  $|z_n| \to 1$. Then $f: \D \to R$ is a
universal covering map of $R$ (and $\mu(z)\,|dz|=\lambda_{R}(z)\,|dz|$, the Poincar\'e metric of $R$).
\end{corollary}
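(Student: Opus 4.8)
The plan is to reduce the statement to Theorem~\ref{thm:ahlforsdisk} by the standard device of lifting $f$ through the universal covering. Let $\pi_R:\D\to R$ be a holomorphic universal covering map. Since $\D$ is simply connected, $f$ lifts: there is a holomorphic map $g:\D\to\D$ with $\pi_R\circ g=f$. Put
\[
\nu(w)\,|dw|:=(\pi_R^*\mu)(w)\,|dw|\quad\text{on }\D,\qquad
\sigma(z)\,|dz|:=(g^*\nu)(z)\,|dz|=(f^*\mu)(z)\,|dz|\quad\text{on }\D.
\]
Both are conformal pseudometrics on $\D$ with curvature $\le-4$: Gaussian curvature is invariant under holomorphic pullback, since at points where the pulled-back density is positive the extra term $\Delta\log|\,\cdot'\,|$ vanishes by harmonicity while the Laplacian transforms by the factor $|\,\cdot'\,|^2$ under a holomorphic change of variable, and the constraint ``$\kappa\le-4$'' is only imposed where the density is positive. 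In the coordinate $z$ one has $\sigma(z)=\mu(f(z))\,|f'(z)|$, so the hypothesis reads exactly $\sigma(z_n)/\lambda_\D(z_n)=1+o((1-|z_n|)^2)$, and Theorem~\ref{thm:ahlforsdisk} yields $\sigma\equiv\lambda_\D$ on $\D$.

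Next I would extract the rigidity of the lift $g$. By the Ahlfors--Schwarz lemma (Remark~\ref{rem:as}) applied to $\nu$ we have $\nu\le\lambda_\D$ on $\D$, while the classical Schwarz--Pick lemma gives $\lambda_\D(g(z))\,|g'(z)|\le\lambda_\D(z)$. Combining these with the previous step,
\[
\lambda_\D(z)=\sigma(z)=\nu(g(z))\,|g'(z)|\le\lambda_\D(g(z))\,|g'(z)|\le\lambda_\D(z)\qquad(z\in\D),
\]
so both inequalities are in fact equalities. Equality on the right means $g^h\equiv1$, so the strong form of the Schwarz--Pick lemma forces $g\in\Aut(\D)$; in particular $g'$ is nowhere zero and $g$ is surjective. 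Equality on the left then gives $\nu(g(z))=\lambda_\D(g(z))$ for every $z$, whence $\nu\equiv\lambda_\D$ on $\D$.

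Finally I would descend back to $R$. From $\pi_R^*\mu=\nu=\lambda_\D=\pi_R^*\lambda_R$ and the fact that $\pi_R$ is a surjective local biholomorphism, we obtain $\mu\equiv\lambda_R$ on $R$. Moreover $f=\pi_R\circ g$ is the composition of a universal covering map with a biholomorphism of $\D$, hence is itself a universal covering map of $R$. I do not expect a genuine obstacle: the argument is a routine uniformization-plus-lifting reduction, and the only points needing a line of justification are the existence of the lift $g$ (because $\D$ is simply connected) and the invariance of curvature under the two holomorphic pullbacks, together with the remark that any zeros of $g'$ one might fear a priori are harmless since the curvature bound is only required where the density is positive.
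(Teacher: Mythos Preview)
Your argument is correct and is precisely the ``standard'' reduction the paper alludes to (the paper does not spell out a proof of Corollary~\ref{cor:1}, merely remarking that Theorem~\ref{thm:ahlforsdisk} is ``ready to use'' for it). Pulling back $\mu$ to $\sigma=f^*\mu$, applying Theorem~\ref{thm:ahlforsdisk} to get $\sigma=\lambda_\D$, and then sandwiching $\lambda_\D=\nu(g(z))|g'(z)|\le\lambda_\D(g(z))|g'(z)|\le\lambda_\D(z)$ via Ahlfors--Schwarz and Schwarz--Pick to force $g\in\Aut(\D)$ and $\nu=\lambda_\D$ is exactly the intended route; nothing is missing.
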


\subsection{The Nehari--Schwarz lemma at the boundary}
The assumptions in Theorem \ref{thm:maindisk} and Corollary \ref{cor:1} rule out that the map $f$ has critical points and in particular
that it is a branched covering. In order to incorporate branching, we recall
 Nehari's sharpening \cite{Neh1946} of the Schwarz--Pick lemma:
\textit{If $f$ is a holomorphic selfmap of $\D$ and $B$ is a finite
  Blaschke product such that $f'/B'$ has a holomorphic extenison to
  $\D$\footnote{Or equivalently, if $\xi \in \D$ is a zero of $B'$ of order
    $k$, then $\xi$ is  a zero of $f'$ of order at least $k$.}, 
  then $f^h(z) \le B^h(z)$ for all $z \in \D$. If, in addition, 
$f^h(z_0)/B^h(z_0)=1$ for some $z_0 \in \D$, then $f= T \circ B$ for some
$T \in \Aut(\D)$, that is, $f$ is a finite Blaschke product.}
Nehari's result has been generalized in \cite{K2013,
KR2013} as follows: 
\textit{For  every  nonconstant holomorphic selfmap $f$ of the unit disk there is always a
(not necessarily finite) Blaschke product $B$ such that $f^h(z) \le B^h(z)$ for all $z
\in \D$ and such that $f'$ and $B'$ have the same zeros counting
multiplicities. 
Such   Blaschke products 
 only depend on their critical points (counting multiplicities) and are uniquely determined up to postcomposition
 with a unit disk automorphism; they are called maximal Blaschke products. Moreover, if
 $f^h(z_0)/B^h(z_0)=1$ for some $z_0 \in \D$\footnote{If $z_0$ is a critical
   point of $B$, then the assumption ``$\frac{f^h(z_0)}{B^h(z_0)}=1$'' has to be interpreted as  $\lim \limits_{z \to z_0} \frac{f^h(z)}{B^h(z)}=1$.} then $f=T \circ B$ for some $T
 \in \Aut(\D)$.}  
Since finite Blaschke products  are exactly
 the maximal Blaschke products corresponding to finitely many critical points
 (of $B$),
 see again  \cite{K2013, KR2013}, Nehari's  generalization of Schwarz' lemma
 is just the special case dealing with finitely many critical points.
 The following theorem handles  the case of equality at the boundary in
 the presence of branch points.

 \begin{theorem}[The strong form of the generalized Nehari--Schwarz lemma at
   the boundary]
   \label{thm:branching}
Let  $f$ be a holomorphic selfmap of $\D$  and $B$ a maximal Blaschke product
such that $f'/B'$ has a holomorphic extension to $\D$.  
If $$
\frac{f^h(z_n)}{B^h(z_n)}=1+o\left((1-|z_n|)^2\right)$$ 
for some sequence $(z_n)$ in $\D$ such that $|z_n| \to 1$, then $f=T \circ B$
for some $T \in \Aut(\D)$ and $f$ is a maximal Blaschke product.
\end{theorem}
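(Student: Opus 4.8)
The plan is to recast the statement as a comparison between two conformal pseudometrics of constant curvature $-4$, and then to apply the Harnack--type inequality and the boundary Hopf lemma of Theorem~\ref{thm:ahlforsdiskgeneral2} in the same way in which Theorem~\ref{thm:ahlforsdisk} is deduced from them, except that the role of the Poincar\'e metric is now taken over by the pull--back pseudometric of the maximal Blaschke product $B$. Put
\[
\lambda_f:=f^{*}\lambda_\D=\frac{|f'|}{1-|f|^{2}},\qquad
\lambda_B:=B^{*}\lambda_\D=\frac{|B'|}{1-|B|^{2}},
\]
two conformal pseudometrics on $\D$ with curvature $\equiv-4$ off their zero sets, so that $f^{h}/B^{h}=\lambda_f/\lambda_B$. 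The generalized Nehari--Schwarz lemma recalled above (\cite{K2013,KR2013}) gives $\lambda_f\le\lambda_B$ on $\D$, and the assumption that $g:=f'/B'$ extends holomorphically to $\D$ says exactly that every zero of $\lambda_B$ is a zero of $\lambda_f$ of at least the same order. The hypothesis of the theorem now reads $\lambda_f(z_n)/\lambda_B(z_n)=1+o\bigl((1-|z_n|)^{2}\bigr)$, so it suffices to prove $\lambda_f\equiv\lambda_B$: then $f^{h}\equiv B^{h}$ on $\D$, the equality statement of the generalized Nehari--Schwarz lemma yields $f=T\circ B$ for some $T\in\Aut(\D)$, and since $B$ is a maximal Blaschke product and this class is invariant under post--composition with $\Aut(\D)$ (\cite{KR2013}), $f$ is itself a maximal Blaschke product.

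To obtain $\lambda_f\equiv\lambda_B$ I would study $u:=\log(\lambda_B/\lambda_f)=\log(B^{h}/f^{h})\ge0$. Off the discrete ``excess branch set'' $S:=\{c\in\D:\mathrm{ord}_{c}f'>\mathrm{ord}_{c}B'\}$, where $u$ has logarithmic poles $u(z)=(\mathrm{ord}_{c}f'-\mathrm{ord}_{c}B')\log\tfrac{1}{|z-c|}+O(1)$, the function $u$ is real--analytic and nonnegative, and the curvature identities (both metrics have curvature $-4$) give
\[
\Delta u=4\lambda_B^{2}-4\lambda_f^{2}=4\lambda_B^{2}\bigl(1-e^{-2u}\bigr)\le 8\lambda_B^{2}\,u,
\]
the last inequality from $e^{-2u}\ge1-2u$. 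Hence $u$ is a nonnegative supersolution of the Schr\"odinger--type operator $\Delta-8\lambda_B^{2}$ on all of $\D$, the logarithmic poles along $S$ being consistent with this. This is precisely the situation of Ahlfors' lemma and of its comparison/rigidity refinements, now with reference pseudometric $\lambda_B$; the essential new feature is that $S$, hence the pole set of $u$, may be infinite and may accumulate on $\partial\D$.

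Now let $p:=\lim_{n}z_{n}\in\partial\D$. Assume $u\not\equiv0$; then the strong minimum principle for $\Delta-8\lambda_B^{2}$ gives $u>0$ throughout $\D$. The boundary Hopf lemma of Theorem~\ref{thm:ahlforsdiskgeneral2}, applied to the pair $(\lambda_f,\lambda_B)$ at $p$, says that such a $u$ must satisfy $\liminf_{z\to p}u(z)/(1-|z|)^{2}>0$ along \emph{every} sequence approaching $p$ --- the scale $(1-|z|)^{2}$ being the critical one for conformal--pseudometric comparisons (this is why $o((1-|z_{n}|)^{2})$ is exactly the right hypothesis, with the error governed by the curvature). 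This contradicts $u(z_{n})=o\bigl((1-|z_{n}|)^{2}\bigr)$. The Harnack--type estimate of the same theorem is what makes the conclusion robust in the presence of the poles of $u$ and of an arbitrary, possibly tangential, approach of $z_{n}$ to $p$. Therefore $u\equiv0$, i.e.\ $\lambda_f\equiv\lambda_B$ and $f^{h}\equiv B^{h}$ on $\D$; in particular $S=\emptyset$, so $f$ and $B$ have exactly the same critical points, and one concludes as in the first paragraph that $f=T\circ B$ with $T\in\Aut(\D)$ is a maximal Blaschke product.

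The step I expect to be the main obstacle is exactly the pseudometric (as opposed to metric) character of the problem: $\lambda_f$ and $\lambda_B$ vanish at the critical points of $f$ and of $B$, these may be infinitely many and may accumulate at the boundary point $p$, and consequently $u$ is unbounded and not globally subharmonic, with poles arbitrarily close to $p$. Carrying the Hopf/Harnack boundary argument through in the presence of these singularities --- and for an arbitrary, not necessarily non--tangential, sequence $(z_{n})$ --- is precisely what the new Harnack--type inequality for conformal pseudometrics (Theorem~\ref{thm:ahlforsdiskgeneral2}) is built for; a soft maximum--principle argument alone would not suffice. This entire argument is, of course, subsumed by the general boundary rigidity results for conformal pseudometrics of Sections~\ref{sec:3} and \ref{sec:4}, applied with constant curvature $-4$ and reference metric $\lambda_B$.
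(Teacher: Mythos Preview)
Your proposal is correct and follows essentially the same route as the paper: set $\lambda=f^{*}\lambda_{\D}$, $\mu=B^{*}\lambda_{\D}$, note that both have constant curvature $-4$ and that the generalized Nehari--Schwarz lemma from \cite{K2013,KR2013} gives $\lambda\preceq\mu$, then invoke the boundary rigidity result for conformal pseudometrics (Theorem~\ref{thm:ahlforsdiskgeneral2} with $c=4$) to conclude $\lambda\equiv\mu$, hence $f=T\circ B$. Two small points of presentation: what you call ``the boundary Hopf lemma of Theorem~\ref{thm:ahlforsdiskgeneral2}'' is actually Theorem~\ref{lem:hopf} (Theorem~\ref{thm:ahlforsdiskgeneral2} is the rigidity statement itself, which is deduced from Theorem~\ref{lem:hopf}); and there is no need to introduce a limit point $p=\lim z_n$, since the hypothesis is only $|z_n|\to1$ and the Hopf--type conclusion (\ref{eq:hopf3}) is a $\limsup$ over the whole boundary, not at a single point.
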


In particular,  if the Blaschke product $B$ in Theorem \ref{thm:branching} is
a finite Blaschke product, then $f$  is also a finite Blaschke product (with the same critical points as
$B$). This provides a boundary version of the case of equality in Nehari's
generalization of the Schwarz--Pick lemma. 
The case  $B=\textrm{id}$ is exactly the boundary Schwarz--Pick lemma, so  Theorem \ref{thm:branching} generalizes Theorem \ref{thm:maindisk}.

\section{Rigidity of conformal pseudometrics} \label{sec:3}

In this section we state one of the main results of the present paper, a
boundary rigidity theorem for conformal
pseudometrics. It includes all the one--dimensional results of
Section \ref{sec:2} as special cases. 

It is convenient to introduce the following terminology.

  \begin{definition}
    Let $\lambda(z) \, |dz|$ and $\mu(z) \, |dz|$ be conformal pseudometrics
    defined on a domain $G$ in $\C$. We say that \textit{$\lambda$ is dominated by
    $\mu$} and write $\lambda \preceq \mu$ whenever
    \smallskip
    \begin{itemize}
    \item[(i)] $\kappa_{\lambda} \le \kappa_{\mu}$ in $G$, and\\[-2mm]
      \item[(ii)] $\mu(z) \, |dz|$ has only isolated zeros in $G$ and 
$\lambda/\mu$ has a continuous extension from $\{z \in G \, : \,
\mu(z)\not=0\}$ to $G$ with values in $[0,1]$.
\end{itemize}

\medskip
If $\lambda \preceq \mu$,  we write $\lambda(z)/\mu(z)$ for the value of the continuous
extension at any point $z \in G$.
\end{definition}

If $f$ and $g$ are holomorphic selfmaps of $\D$ and $f$ is not constant, then
it is clear that $g^h \le f^h$ if and only if $g^* \lambda_{\D} \preceq
f^*\lambda_{\D}$.

\medskip

We can now state our main boundary rigidity result for conformal pseudometrics.

 \hide{\begin{theorem}[Boundary ridigity for conformal metrics with variable negative curvature] \label{thm:ahlforsdiskgeneral}
Let $\kappa : \D \to \R$ be a continuous function such that $-c \le \kappa(z) \le -4$ for all $z \in \D$ for some $c>0$.
Suppose  $\lambda(z) \, |dz|$ is   a conformal pseudometric on $\D$ with curvature
$\kappa_{\lambda} \le \kappa$ and  $\mu(z) \, |dz|$ is a conformal metric on $\D$ with  curvature
$\kappa_{\mu} =\kappa$.
If
$$ \lambda(z) \le \mu(z) \quad \text{ for all } z \in \D $$
and
\begin{equation} \label{eq:rigcond}
  \frac{\lambda(z_n)}{\mu(z_n)}=1+ o\left((1-|z_n|)^{c/2}\right)
  \end{equation}
for some sequence $(z_n)$ in $\D$ tending to $\partial \D$, then
$$ \lambda=\mu\, .$$
\end{theorem}}

  \begin{theorem}[Boundary ridigity for conformal pseudometrics with variable
    negative curvature] \label{thm:ahlforsdiskgeneral2}
    Let
$\lambda(z) \, |dz|$ and $\mu(z) \, |dz|$ be   conformal pseudometrics on $\D$
such that $\kappa_{\mu}=\kappa$ for some   locally
H\"older continuous function $\kappa : \D \to \R$  with 
$-c \le \kappa(z) \le -4$ in $\D$ for some $c>0$.
Suppose  $\lambda\preceq\mu$ and
\begin{equation} \label{eq:bdd23}
  \frac{\lambda(z_n)}{\mu(z_n)}=1+ o\left((1-|z_n|)^{c/2}\right)
  \end{equation}
for some sequence $(z_n)$ in $\D$ such that $|z_n| \to 1$. Then
$$ \lambda=\mu\, .$$
\end{theorem}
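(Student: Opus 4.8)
The plan is to exploit the curvature hypotheses to reduce the rigidity statement to a comparison between subharmonic and harmonic functions, and then to apply a boundary Hopf-type argument along the sequence $(z_n)$. Set $u := \log(\mu/\lambda)$ on the open set $\Omega := \{z\in\D : \lambda(z)\mu(z)>0\}$. Since $\lambda\preceq\mu$ we have $u\ge 0$ there, and since $\kappa_\lambda\le\kappa=\kappa_\mu$, the standard computation with \eqref{def:curvature} gives
\[
\Delta u = \Delta\log\mu - \Delta\log\lambda = -\kappa_\mu\mu^2 + \kappa_\lambda\lambda^2 \le -\kappa\,\mu^2 + \kappa\,\lambda^2 = (-\kappa)(\mu^2-\lambda^2)\ge 0,
\]
using $-\kappa\ge 4>0$ and $\mu\ge\lambda\ge 0$. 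So $u$ is a nonnegative subharmonic function on $\Omega$. The first technical point is to handle the zeros of $\mu$ (which are isolated by definition of $\preceq$) and of $\lambda$: near an isolated zero of $\mu$ of order $m$ one has $\mu\asymp |z-a|^m(\log\cdots)$-type behavior forced by $\kappa_\mu$ bounded; since $\lambda\le\mu$ and $\lambda/\mu$ extends continuously into $[0,1]$, $u$ extends to a subharmonic function across such points (a removable-singularity argument for the subharmonic $u$, bounded below), so in fact $u$ is subharmonic and $\ge 0$ on all of $\D$ minus the (isolated) zeros of $\lambda$, and extends as a subharmonic function with values in $[0,+\infty]$; the key inequality $\Delta u\ge 0$ survives in the distributional sense.

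Next I would quantify the lower curvature bound. The hypothesis $\kappa\ge -c$ is what pins the exponent $c/2$ in \eqref{eq:bdd23}, and the mechanism is an \emph{a priori} interior estimate: because $\kappa_\mu=\kappa$ with $-c\le\kappa\le -4$, the metric $\mu$ is squeezed between fixed multiples of $\lambda_\D$ in the hyperbolic-distance sense, and the subharmonic function $u$ cannot decay to $0$ at the boundary faster than a definite rate unless it vanishes identically. Concretely, I would build a suitable superharmonic (or harmonic) barrier comparison near a boundary point: if $u(z_n)\to 0$ like $o((1-|z_n|)^{c/2})$, then using the Harnack-type inequality for conformal pseudometrics — which is exactly the content the authors advertise as part of Theorem~\ref{thm:ahlforsdiskgeneral2} itself, so I would instead derive it inline from a Green's-function representation of the subharmonic $u$ on disks $D(z_n, \tfrac12(1-|z_n|))$ — one propagates the smallness of $u(z_n)$ to smallness of $u$ on a whole hyperbolic ball, and then via a chain/Harnack argument to smallness of $u$ at a fixed interior point $z_0$. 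Letting $n\to\infty$ forces $u(z_0)=0$, and then the strong maximum principle for the subharmonic $u\ge 0$ (which attains its minimum $0$ at the interior point $z_0$) gives $u\equiv 0$, i.e. $\lambda=\mu$ on $\Omega$; the zeros are then handled by continuity of $\lambda/\mu$.

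The main obstacle I anticipate is precisely the passage from "$u(z_n)$ small" to "$u$ small at a fixed point," i.e. establishing the right Harnack-type inequality with the correct dependence on the exponent $c/2$: one must control how fast a nonnegative subharmonic function with $\Delta u = (-\kappa)(\mu^2-\lambda^2)$ — note the right-hand side is not bounded, it blows up like $\lambda_\D^2 \asymp (1-|z|)^{-2}$ near $\partial\D$ — can tend to zero along a boundary sequence. The blow-up of the Laplacian near the boundary is the reason the naive harmonic Hopf lemma gives exponent $1$ but here one only gets $c/2$; managing this requires a careful choice of comparison function adapted to the weight $(1-|z|)^{-2}$, e.g. conformally rescaling each boundary disk to unit size and tracking how the curvature bound $-c\le\kappa$ controls the rescaled data. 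A secondary subtlety is that $\mu$ need not be complete or bounded away from $0$ globally, so the "chain of Harnack balls" connecting $z_n$ to $z_0$ must be taken along a geodesic-like path where $u$ is finite, using the isolatedness of the zero sets to route around them. Once the quantitative propagation estimate is in hand, the rigidity conclusion follows from the strong maximum principle essentially for free.
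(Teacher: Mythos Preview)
Your opening computation contains a sign error that derails the whole plan. With $u=\log(\mu/\lambda)$ you obtain
\[
\Delta u=-\kappa_\mu\mu^2+\kappa_\lambda\lambda^2\;\le\;-\kappa\mu^2+\kappa\lambda^2=(-\kappa)(\mu^2-\lambda^2),
\]
which is an \emph{upper} bound on $\Delta u$, not a lower bound; it does \emph{not} give $\Delta u\ge 0$, so $u$ is not shown to be subharmonic. (Indeed $\log(\mu/\lambda)$ is a difference of two subharmonic functions and is generally neither sub- nor superharmonic.) The final step is also in the wrong direction: a subharmonic function attaining its \emph{minimum} at an interior point need not be constant; the strong maximum principle controls interior \emph{maxima}. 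So even if you could propagate $u(z_n)\to 0$ inward to get $u(z_0)=0$, nothing would follow.

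What the paper actually does is work with $u=\log(\lambda/\mu)\le 0$ and derive not subharmonicity but the weighted linear inequality
\[
\Delta u\;\ge\;\frac{2c}{(1-|z|^2)^2}\,u\qquad\text{on }\{\lambda>0\},
\]
using $e^{2y}-1\ge 2y$, the Ahlfors bound $\mu\le\lambda_\D$, and the lower curvature bound $\kappa\ge -c$. The exponent $c/2$ then falls out of an \emph{explicit} barrier: one checks by hand that $v_r(z)=(1-|z|^2)^{c/2}e^{(1-|z|^2)/r^2}$ satisfies the same inequality $\Delta v_r\ge \frac{2c}{(1-|z|^2)^2}v_r$ on the annulus $r\le|z|<1$, and a maximum-principle comparison between $u$ and a suitable multiple of $-v_r$ yields the Hopf-type estimate
\[
\frac{\log(\lambda(z)/\mu(z))}{(1-|z|^2)^{c/2}}\;\le\;\frac{C_r}{(1-r^2)^{c/2}}\max_{|\xi|=r}\log\frac{\lambda(\xi)}{\mu(\xi)}\,<0
\]
whenever $\lambda\not\equiv\mu$. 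This immediately contradicts the hypothesis \eqref{eq:bdd23}; there is no Harnack chain back to a fixed interior point, and no appeal to a strong maximum principle for $u$ itself. Your proposal correctly identifies that the blow-up $\asymp(1-|z|)^{-2}$ in the Laplacian is what produces the exponent $c/2$ rather than $1$, but the mechanism that exploits it---the explicit supersolution $v_r$ and the linear inequality above, not subharmonicity---is precisely the missing idea.
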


\begin{remark}
If the conformal pseudometric $\mu(z) \, |dz|$  in Theorem
 \ref{thm:ahlforsdiskgeneral2} is a conformal \textit{metric}, that is,
 $\mu$ is zerofree, then it suffices to assume
that $\kappa_{\mu}=\kappa$ for some continuous function $\kappa : \D \to \R$  with 
$-c \le \kappa(z) \le -4$ in $\D$ for some $c>0$. This will follow from the
proof of Theorem \ref{thm:ahlforsdiskgeneral2}. The H\"older--condition in Theorem \ref{thm:ahlforsdiskgeneral2} is a compromise between
generality and readability. It guarantees that we can always consider classical $C^2$--solutions of the corresponding curvature equation,
apply the usual maximum principle etc.. This assumption is sufficiently general for all applications in the present paper, so we  restrict ourselves to
this case here and do not strive for highest generality with respect to
regularity assumptions.
\end{remark}

\begin{remark}
We  do
\textit{not} need to assume a lower bound for the curvature of the pseudometric $\lambda(z) \, |dz|$ in Theorem \ref{thm:ahlforsdiskgeneral2}. This is in sharp contrast to Theorem \ref{thm:ahl124} below which deals with a variant of Theorem \ref{thm:ahlforsdiskgeneral2}
for \textit{sequences} of conformal pseudometrics.
\end{remark}

\begin{remark}  Theorem \ref{thm:ahlforsdiskgeneral2} includes:
  \begin{itemize}
  \item[(a)] The strong form of the Ahlfors--Schwarz lemma at the boundary (Theorem \ref{thm:ahlforsdisk}):    
    this is the special case $\mu=\lambda_{\D}$ of Theorem \ref{thm:ahlforsdiskgeneral2}, since $\kappa_{\lambda} \le
    -4$ automatically implies $\lambda \preceq \lambda_{\D}$.
    \item[(b)] The strong form of the generalized Nehari--Schwarz lemma at the
      boundary (Theorem \ref{thm:branching}): this is the special case
      $\mu=B^*\lambda_{\D}$ where $B$ is a maximal Blaschke product and $\lambda=f^*\lambda_{\D}$ for some
      holomorphic selfmap $f$ of $\D$ such that $f'/B'$ has a holomorphic
      extension to $\D$, so   \cite[Corollary 3.1]{KR2013} 
    shows that $f^h \le B^h$  and hence $\lambda \preceq \mu$.

\end{itemize}
\end{remark}

\begin{remark}

We wish to point out that if we assume in Theorem
\ref{thm:ahlforsdiskgeneral2} that  condition (\ref{eq:bdd23}) holds for a
sequence $(z_n)$ in $\D$ tending to some \textit{interior} point $z_0 \in \D$,
then the proof below will show that we also have $\lambda=\mu$, even under the
milder assumption that $\kappa$ is only bounded from above by $-4$. We
thereby obtain  an extension of the main result in \cite[Theorem 2.2]{KR2013}
which has covered the constant curvature case  $\kappa \equiv -4$. The main
new ingredient of Theorem  \ref{thm:ahlforsdiskgeneral2}, however,  is the fact
that the ``asymptotic'' equality (\ref{eq:bdd23}) for some \textit{boundary} sequence $(z_n)$ already forces $\lambda=\mu$.
\end{remark}

\begin{remark}
  The condition $-c \le \kappa_{\mu} \le -4$ for some constant $c>0$ in
  Theorem \ref{thm:ahlforsdiskgeneral2} appears frequently in studying complex
  geometric properties of domains in $\C^N$ or complex manifolds, see
  e.g.~\cite{BGZ}. It often goes by the name ``negatively pinched''. We see that
  the 
  pinching constant $c$ controls the error term in the asymptotic relation (\ref{eq:bdd23}).
  \end{remark}  

  It turns out that the behaviour of  a negatively pinched conformal pseudometric $\mu(z) \, |dz|$ at  its isolated zeros can be precisely described:

  \begin{lemma}[Isolated zeros of negatively pinched conformal pseudometrics,
      {\cite[Theorem 3.2]{KR2008}}] \label{lem:zerosisol}
  Let $\mu(z) \, |dz|$ be a conformal pseudometric on the disk $K_r(\xi)=\{z \in \C \, : \, |z-\xi|<r\}$
  with center $\xi \in \C$ and radius $r>0$ such that $\mu(\xi)=0$ and $\mu>0$ on $K_r(\xi)\setminus\{\xi\}$. 
 Suppose that   $-c \le \kappa_{\mu} \le -4$ for some constant $c<0$. Then there is unqiue real number $\alpha > 0$ such that the limit 
  $$\lim \limits_{z \to \xi} \frac{\mu(z)}{|z-\xi|^{\alpha}} \, $$
  exists and is non-zero.  
  \end{lemma}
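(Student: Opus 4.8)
This is \cite[Theorem~3.2]{KR2008}; here is the plan, which is local at $\xi$ and entirely potential-theoretic, so nothing about the global geometry of $\D$ enters. We may assume $\xi=0$, and we fix $\rho\in(0,r)$. Since $\mu$ is continuous on $\overline{K_\rho(0)}$ with $\mu(0)=0$, it is bounded there, say $\mu\le C_0$, and we set $u:=\log\mu$ on the punctured disk $K_\rho^*:=K_\rho(0)\setminus\{0\}$, where $\mu$ is positive and of class $C^2$. There the curvature equation $\Delta u=-\kappa_\mu\,\mu^2$ holds classically, and the pinching hypothesis $4\le-\kappa_\mu\le c$ yields $0<4\mu^2\le\Delta u\le c\,\mu^2\le cC_0^2$ on $K_\rho^*$. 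Hence $u$ is subharmonic on $K_\rho^*$, it is bounded above near $0$, and $\Delta u\in L^\infty(K_\rho^*)$.

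The idea is to read off the asymptotics from the structure of the isolated singularity of $u$ at $0$. A subharmonic function on a punctured disk that is bounded above near the puncture extends to a subharmonic function $\hat u$ on $K_\rho(0)$ (classical removable/logarithmic-singularity theory), and the Riesz measure of $\hat u$ equals $\tfrac1{2\pi}(\Delta u)\,dA$ on $K_\rho^*$ together with a point mass $\alpha:=\mu_{\hat u}(\{0\})\ge0$ at the origin. Writing the Riesz representation of $\hat u$ on $K_{\rho/2}(0)$ then gives, for $z\in K_{\rho/2}(0)\setminus\{0\}$,
\[
u(z)=\alpha\log|z|+\psi(z),\qquad \psi(z)=h(z)+c_0-\int_{K_{\rho/2}(0)}G(z,\zeta)\,\tfrac1{2\pi}\Delta u(\zeta)\,dA(\zeta),
\]
where $G$ is the Green function of $K_{\rho/2}(0)$, $h$ is harmonic on $K_{\rho/2}(0)$, and $c_0$ is a constant. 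Now the density $\tfrac1{2\pi}\Delta u=-\tfrac1{2\pi}\kappa_\mu\,\mu^2$ of the absolutely continuous part of $\mu_{\hat u}$ is bounded — this is exactly where the \emph{lower} curvature bound $\kappa_\mu\ge-c$ is used, the \emph{upper} bound $\kappa_\mu\le-4$ having served only to make $u$ subharmonic — so its Green potential is continuous (indeed of class $C^{1,\gamma}$), and therefore $\psi$ is continuous on $K_{\rho/2}(0)$. Consequently
\[
\frac{\mu(z)}{|z|^{\alpha}}=e^{\psi(z)}\longrightarrow e^{\psi(0)}\in(0,\infty)\qquad(z\to 0).
\]

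Finally, $\alpha>0$: otherwise $\mu(z)\to e^{\psi(0)}>0$ as $z\to0$, contradicting $\mu(0)=0$ and continuity of $\mu$. And $\alpha$ is unique, since if $\mu(z)/|z|^{\alpha'}$ also had a finite nonzero limit with $\alpha'\ne\alpha$, then $|z|^{\alpha-\alpha'}$ would have a finite nonzero limit as $z\to0$, which is impossible. The one genuinely structural step is the passage to $\hat u$ and its Riesz decomposition; the mild point to watch there is that the pinching constants enter only through the $L^\infty$ bound on the Riesz density, which is precisely what promotes the crude a priori bound ``$u\le\log C_0$'' to the sharp statement that $\mu/|z|^\alpha$ has a genuine, finite, nonzero limit.
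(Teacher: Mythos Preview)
The paper itself does not prove this lemma; it is quoted from \cite[Theorem~3.2]{KR2008}. Your argument is correct and follows the natural potential--theoretic route: extend $u=\log\mu$ subharmonically across the puncture (it is bounded above there), apply the Riesz decomposition to isolate the point mass $\alpha\ge 0$ of $\mu_{\hat u}$ at the origin, and then use the boundedness of the density $-\tfrac{1}{2\pi}\kappa_\mu\mu^2$ --- which is exactly where the lower curvature bound $\kappa_\mu\ge -c$ enters --- to see that the remaining Green potential is continuous, whence $\mu(z)/|z|^{\alpha}\to e^{\psi(0)}\in(0,\infty)$. The exclusion of $\alpha=0$ and the uniqueness are handled cleanly. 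This is in the same spirit as \cite{KR2008}; indeed the paper's own Poisson--Jensen formula for pseudometrics with isolated zeros (Lemma~\ref{lem:poissonjensen}) is essentially the global version of the local Riesz decomposition you write down, and it invokes \cite[Theorem~1.1]{KR2008} for precisely the regularity step you isolate.
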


  The number $\alpha>0$ is called the \textit{order} of the isolated zero $\xi$
  of the pseudometric $\mu(z) \, |dz|$. We understand  that,  whenever we speak of
  the order of an isolated zero of a conformal pseudometric $\mu(z) \, |dz|$, we always implicitly assume that
  $\mu(z) \, |dz|$ is negatively pinched in a punctured neighborhood of this
  zero as in Lemma \ref{lem:zerosisol}. Occasionally, it will be convenient to slightly abuse language and call  $\xi$ a zero of $\mu(z) \, |dz|$ of order $\alpha=0$ if $\mu(\xi)\not=0$. 
  Note that if $\mu(z) \,
  |dz|=f^*\lambda_{\D}(z)  \,
  |dz|$ for some nonconstant holomorphic selfmap $f$ of $\D$, then $\mu$ has a
  zero $\xi$ of order  $\alpha>0$  if and only if $\xi$ is a branch point of
  $f$ of order $\alpha>0$, that is, a zero of $f'$ order $\alpha>0$. 

\section{Rigidity of  sequences of conformal pseudometrics}
\label{sec:sequences} \label{sec:4}

We now turn to  rigidity results for {\it sequences} of conformal pseudometrics. 
These results will also be needed  for the proof of Theorem \ref{thm:mainball}. We are dealing with problems of the following type:

\begin{problem1}[The strong form of the Ahlfors--Schwarz lemma for sequences] \label{prob:ahlfors}
  Let $(\lambda_n)$ be a sequence of conformal pseudometrics such that $\kappa_{\lambda_n} \le -4$ in $\D$. Suppose that
either
  \begin{itemize}
  \item[(i)] (Interior case)\
    $$\lambda_n(z_0) \to \lambda_{\D}(z_0) \text{ for some interior point } z_0\, ,$$
  \end{itemize}
  or
  \begin{itemize}
    
    \item[(ii)] (Boundary case)
      $$ \frac{\lambda_n(z_n)}{\lambda_{\D}(z_n)}=1+o \left( (1-|z_n|)^2 \right)$$ for some sequence $(z_n)$ in $\D$ tending to $\partial \D$.
      \end{itemize}
  Does it follow that $\lambda_n \to \lambda_{\D}$  locally uniformly in $\D$\,?
\end{problem1}

By the strong form of the Ahlfors--Schwarz lemma at the boundary for a single conformal pseudometric (cf.~Remark \ref{rem:as} for the interior case and
Theorem \ref{thm:ahlforsdisk} for the boundary case) the answer is ``yes'' if $\lambda_1=\lambda_2=\ldots$. However, in general, the answer is ``No'' in both cases! The following two examples illustrate that there
are at least two phenomena which play a role here.

\begin{example} \label{exa:ahlforssequence}
    Consider
    $$ s_n(z):=-1-\frac{1}{n!}+\left( |z|^2+\frac{1}{n!} \right)^{1/n} \, , \quad n \in \N, \, z \in \D \, .$$
    Then each $s_n$ is a negative smooth subharmonic function in $\D$ and therefore 
    $$ \lambda_n(z) \, |dz|=e^{s_n(z)} \lambda_{\D}(z) \, |dz|$$
    is a conformal metric on $\D$ such that $\kappa_{\lambda_n} \le -4$. Note that $s_n(z_n) \to 0$ for any sequence $(z_n)$ in $\D$ which is bounded way from the origin. In fact, one can show that for each $N \in \N$ there is a sequence $(z_n)$ in $\D$ such that $|z_n| \to 1$ and
    $$ \frac{\lambda_n(z_n)}{\lambda_{\D}(z_n)}=1+o \left( (1-|z_n|)^N \right) \, .$$
         However, $\lambda_n(0) \to \lambda_{\D}(0)/e$, whereas $\lambda_n \to \lambda_{\D}$ locally uniformly in $\D \setminus \{0\}$.  Note that $\kappa_{\lambda_n}(0) \to -\infty$.
  \end{example}
  
     \begin{example}\label{exa:ahlforssequence0}
  Let $(\alpha_n)$ be a sequence of positive real numbers $\alpha_n<1$ such that $\alpha_n \to 0$. Then
       $$ \mu_n(z)\, |dz|:=\frac{(1+\alpha_n) |z|^{\alpha_n}}{1-|z|^{2 (1+\alpha_n)}}\, |dz| $$
       are conformal pseudometrics on $\D$ such that $\kappa_{\mu_n} \equiv -4$.
       Note that $\mu_n(z) \to \lambda_{\D}(z)$ for any $z \in \D \setminus \{0\}$ whereas $\mu_n(0)=0$ for each $n=1,2, \ldots$.
       Now we choose $z_n \in \D\setminus \{0\}$ such that $z_n \to 0$ and $|z_n|^{\alpha_n} \to 1$, and consider the unit disk automorphisms
       $$ T_n(z):=\frac{z_n-z}{1-\overline{z_n} z} \, .$$
       Then
       $$ \lambda_n(z) \, |dz|:=T^*_n \mu_n(z) \, |dz|$$
       are conformal pseudometrics in $\D$ with $\kappa_{\lambda_n} \equiv -4$. Moreover,
       $$ \lambda_n(0)=\frac{(1+\alpha_n) |z_n|^{\alpha_n}}{1-|z_n|^{2 (1+\alpha_n)}} \left(1-|z_n|^2 \right) \to 1= \lambda_{\D}(0) \, .$$
       We note that $\lambda_n \to \lambda_{\D}$ \textit{pointwise}, but not locally uniformly in $\D$.
            \end{example}
     
  These examples show in particular that there is no full
  sequential version of the strong form of the Ahlfors--Schwarz lemma.
 Note that in Example \ref{exa:ahlforssequence}  there is no (locally uniform) lower bound on the
    curvatures, while in  Example \ref{exa:ahlforssequence0} the zeros of $(\lambda_n)$ ``disappear'' as $n \to \infty$.
The following result shows that a certain control over the curvature as well as the potential zeros does give a version of the strong Ahlfors--Schwarz lemma for sequences. 
This is our main result for sequences of conformal pseudometrics.

\hide{    \begin{theorem}[The strong Ahlfors--Schwarz lemma for sequences] 
      \label{thm:ahl123}
      Let $(\lambda_n)$ be a sequence of conformal pseudometrics on $\D$ such that $\kappa_{\lambda_n} \le -4$ on $\D$ such that
      \begin{equation} \label{eq:b}
\frac{\lambda_n(z_n)}{\lambda_{\D}(z_n)}=1+o \left( (1-|z_n|^2 )^2 \right) \text{ for some sequence } (z_n) \text{ in } \D\, . 
        \end{equation}
      Suppose that $(\kappa_{\lambda_n})$ is uniformly bounded below in $\D$ and that each $(\lambda_n)$  can  have only isolated zeros in $\D$. Then either
    \begin{itemize}
      \item[(i)]  $\lambda_n/ \lambda_{\D} \to 1$ locally uniformly on $\D$, or 
     \item[(ii)] there is a subsequence $(\lambda_{n_k})$ such that every $\lambda_{n_k}$ has a zero $\xi_{n_k} \in \D$ of order $\alpha_{n_k}>0$ such that $\xi_{n_k} \to\zeta_0 \in \D$ and $\alpha_{n_k} \to 0$.
\end{itemize}
\end{theorem}}

    \begin{theorem}[Boundary and interior rigidity for sequences of conformal pseudometrics] 
      \label{thm:ahl124}
Let $\lambda_n(z) \, |dz|$, $n=1,2,\ldots$,  be conformal
pseudometrics on $\D$ with only isolated zeros so that 
$(\kappa_{\lambda_n})$ is uniformly bounded below in $\D$. Let
$\mu(z) \, |dz|$ be a conformal pseudometric with
 $\kappa_{\mu}=\kappa$ for
      some locally H\"older continuous function $\kappa : \D \to \R$ with $-c
      \le \kappa(z) \le -4$  for some $c>0$.
Suppose  $\lambda_n \preceq \mu$ for any $n=1,2,\ldots$ and
    \begin{equation} \label{eq:b}
      \frac{\lambda_n(z_n)}{\mu(z_n)}=1+o \left( (1-|z_n| )^{c/2} \right)
      \end{equation}
for some sequence $(z_n)$ in $\D$ which  either is compactly contained in $\D$ (interior case) or else $|z_n| \to 1$ (boundary case). Then there are the following alternatives:
 either
    \begin{itemize}
      \item[(i)]  $\lambda_n/\mu \to 1$ locally uniformly on $\D$, or 
     \item[(ii)] there is a point $\xi_0 \in \D$ and a subsequence $(\lambda_{n_k})$ such that every $\lambda_{n_k}$ has a zero $\xi_{n_k} \in \D$ of order $\alpha_{n_k}>0$ such that $\xi_{n_k} \to\xi_0$ and $\alpha_{n_k} \to 0$.
\end{itemize}
\end{theorem}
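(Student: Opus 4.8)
The plan is to reduce the sequential statement to the single-metric rigidity result Theorem \ref{thm:ahlforsdiskgeneral2} by extracting a suitable limit pseudometric, and to isolate the two failure modes (collapsing curvature, vanishing zeros) as the only obstructions. First I would set $u_n:=\log(\lambda_n/\mu)$ on $\{z:\mu(z)\lambda_n(z)\neq 0\}$; by the hypothesis $\lambda_n\preceq\mu$ we have $u_n\le 0$, and where defined $\Delta u_n=-\kappa_{\lambda_n}\lambda_n^2+\kappa\mu^2\ge 4\lambda_n^2-c\mu^2$, which combined with $\lambda_n\le\mu$ shows $\Delta u_n\ge (4-c)\mu^2\cdot(\text{bounded})$ — more usefully, I would invoke the Harnack-type inequality and boundary Hopf lemma that are the content of Theorem \ref{thm:ahlforsdiskgeneral2} (these are exactly the tools the introduction advertises) to get, from the asymptotic equality \eqref{eq:b} along $(z_n)$, a uniform lower bound on $u_n$ on compact subsets: concretely, that for each compact $K\subset\D$ there is $\varepsilon_K\to 0$ with $u_n\ge -\varepsilon_K$ on $K$ minus a controlled neighborhood of the zero set of $\lambda_n$. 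The point is that the Harnack estimate propagates the smallness of $1-\lambda_n/\mu$ from the point $z_n$ (interior or boundary) to all of $\D$, uniformly in $n$, \emph{provided} the curvatures $\kappa_{\lambda_n}$ stay bounded below (which is assumed) and the zeros of $\lambda_n$ do not accumulate with order tending to $0$.

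Next I would run a normal families / subsequence argument. Since the $\lambda_n$ are locally uniformly bounded above by $\mu$ and (by the uniform curvature bounds plus standard elliptic estimates for the curvature equation $\Delta\log\lambda_n=-\kappa_{\lambda_n}\lambda_n^2$) locally equicontinuous away from their zeros, a diagonal argument extracts a subsequence $\lambda_{n_k}$ converging locally uniformly on $\D\setminus Z$ to a limit $\lambda_\infty$, where $Z$ is the set of accumulation points of the zero sets $\{z:\lambda_{n_k}(z)=0\}$. Here the Hurwitz-type theorem for conformal pseudometrics (Theorem \ref{prop:zeros}) enters: it forces $Z$ to be discrete and, crucially, shows that a point $\xi_0\in Z$ arises either as an honest isolated zero of $\lambda_\infty$ of some order $\alpha_0>0$ equal to a limit of orders $\alpha_{n_k}>0$ of zeros $\xi_{n_k}\to\xi_0$, or as a point where $\alpha_{n_k}\to 0$ — which is precisely alternative (ii). So I would argue: if alternative (ii) fails, then after passing to subsequences all zero orders stay bounded away from $0$, $Z$ consists of genuine zeros of $\lambda_\infty$ with positive orders, and $\lambda_\infty(z)\,|dz|$ is a conformal pseudometric on $\D$ with $\kappa_{\lambda_\infty}\le -4$ (lower semicontinuity of curvature in the limit, or the distributional inequality $\Delta\log\lambda_\infty\ge 4\lambda_\infty^2$), hence $\lambda_\infty\preceq\mu$.

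Finally I would apply Theorem \ref{thm:ahlforsdiskgeneral2} to the pair $(\lambda_\infty,\mu)$. The asymptotic relation \eqref{eq:b} along $(z_n)$, passed to the limit via the uniform Harnack estimate (in the interior case one simply evaluates at $z_0$; in the boundary case one uses that the Harnack bound localizes the defect $(1-\lambda_{n_k}/\mu)$ near the boundary and transfers it to $\lambda_\infty$), gives either $\lambda_\infty(z_0)=\mu(z_0)$ at the interior limit point or the boundary asymptotic $\lambda_\infty(w_m)/\mu(w_m)=1+o((1-|w_m|)^{c/2})$ for a suitable boundary sequence; in both cases Theorem \ref{thm:ahlforsdiskgeneral2} (respectively its interior variant noted in the remark) yields $\lambda_\infty=\mu$. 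Since every subsequence of $(\lambda_n)$ that does not exhibit alternative (ii) has a further subsequence converging locally uniformly to $\mu$, a standard subsequence-of-subsequence argument gives alternative (i): $\lambda_n/\mu\to 1$ locally uniformly on $\D$.

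The main obstacle I anticipate is the third step in the boundary case — establishing that the asymptotic defect at the boundary sequence $(z_n)$ genuinely survives the limit rather than being washed out. This requires the quantitative Harnack inequality of Theorem \ref{thm:ahlforsdiskgeneral2} to be \emph{uniform in $n$} and sensitive enough to relate the defect $1-\lambda_{n}(z_n)/\mu(z_n)$ at a boundary-approaching point to the defect of $\lambda_\infty$ along some boundary sequence, all while controlling the (possibly moving) zeros of $\lambda_n$; handling the interaction between the boundary behavior and the zero set is the delicate point, and is exactly where the hypotheses ``uniform lower curvature bound'' and ``only isolated zeros'' are used, with their failure producing alternative (ii) via Theorems \ref{prop:zeros} and the behavior in Examples \ref{exa:ahlforssequence} and \ref{exa:ahlforssequence0}.
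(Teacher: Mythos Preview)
Your overall strategy—Harnack propagation, Hurwitz-type control of zeros, normal families—matches the paper's toolkit, but the architecture differs in one essential respect, and your version carries a real gap.

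The paper does \emph{not} construct a limit pseudometric $\lambda_\infty$ and apply the single-metric Theorem~\ref{thm:ahlforsdiskgeneral2} to it. Instead it works entirely with the ratios $\lambda_n/\mu$. For the boundary case the Harnack inequality (Theorem~\ref{lem:hopf}) is used as you suggest, but in a sharper and more direct way: from \eqref{eq:b} and \eqref{eq:hopf2} one reads off immediately that $\max_{|\xi|=r}\log(\lambda_n(\xi)/\mu(\xi))\to 0$ for each fixed $r\in(0,1)$, so there is an \emph{interior} sequence $(z_n')$ on $|z|=r$ with $\lambda_n(z_n')/\mu(z_n')\to 1$, and the boundary case reduces completely to the interior case for the same sequence $(\lambda_n)$. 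This cleanly sidesteps your acknowledged ``main obstacle'' of transferring the boundary asymptotic to a limit object—no such transfer is ever attempted.

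For the interior case, the paper again avoids $\lambda_\infty$. The dichotomy is: either some subsequence of $(\lambda_n)$ has zeros in $\{\mu>0\}$ accumulating at a point of $\D$, in which case Theorem~\ref{prop:zeros} forces the orders to tend to $0$ (alternative (ii)); or eventually $\lambda_n$ has no extraneous zeros on any fixed $\overline{K_R(0)}$. In the latter case, normality of $\{\lambda_n/\mu\}$ comes from the Poisson--Jensen representation (Lemma~\ref{lem:poissonjensen}), which decomposes $\log(\lambda_n/\mu)$ into a zero-divisor term, a harmonic part, and a Green potential, each separately a normal family (Theorem~\ref{prop:zeros}(a) controls the divisor coefficients). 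Then the quantitative Harnack bound of Corollary~\ref{cor:hopf}, with its explicit constant depending only on $\kappa_\mu$, is applied to each $\lambda_n$ \emph{individually} and passed to the limit, forcing any subsequential limit $g$ of $\lambda_n/\mu$ with $g(z_0)=1$ to be identically $1$. No curvature inequality for a limit metric is ever needed.

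Your route via $\lambda_\infty$ is not hopeless, but it has a genuine gap: to invoke Theorem~\ref{thm:ahlforsdiskgeneral2} you need $\lambda_\infty$ to be a $C^2$ pseudometric with $\kappa_{\lambda_\infty}\le\kappa$, yet the hypotheses give only uniform $L^\infty$ \emph{bounds} on $\kappa_{\lambda_n}$, not regularity. Standard elliptic estimates on $\Delta\log\lambda_n=-\kappa_{\lambda_n}\lambda_n^2$ with merely bounded right-hand side yield at best $C^{1,\alpha}$ convergence, so the curvature inequality for $\lambda_\infty$ is only distributional, and the paper's framework (which explicitly assumes $C^2$ densities) does not directly apply. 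The paper's approach is designed precisely to bypass this regularity issue by never leaving the class of ratios $\lambda_n/\mu$.
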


Condition (ii) in Theorem \ref{thm:ahl124} roughly says that there is an accumulation of zeros which fade
away as $k \to \infty$. In the interior case of Theorem \ref{thm:ahl124} we shall see that it suffices to assume that $(\kappa_{\lambda_n})$ is \textit{locally} uniformly bounded below in $\D$.

\begin{remark}[The strong Ahlfors--Schwarz lemma for sequences]
Choosing $\mu=\lambda_{\D}$ and  a constant sequence $(z_n)=(z_0)$ for some fixed point $z_0 \in \D$, condition (\ref{eq:b}) just means that
  $\lambda_n(z_0) \to \lambda_{\D}(z_0)$, so this special case of Theorem \ref{thm:ahl124} gives a solution to Problem
  \ref{prob:ahlfors} for the interior case. If $(z_n)$ tends to $\partial \D$, then Theorem \ref{thm:ahl124} gives a solution to Problem
  \ref{prob:ahlfors} for the boundary case. As illustrated by Examples
  \ref{exa:ahlforssequence} and \ref{exa:ahlforssequence0}, Theorem
  \ref{thm:ahl124} is essentially  best possible.
      \end{remark}

Under the conditions of  Theorem \ref{thm:ahl124}, we see that if $(\lambda_n)$ is a sequence of \textit{metrics} (i.e.~none of the $\lambda_n$'s has a zero) or if there is a constant $\alpha>0$ such that each $\lambda_n$ has only zeros of order at least $\alpha$, then  $\lambda_n \to \mu$ locally uniformly in $\D$.  This applies in particular to the case
     $$ \mu(z) \, |dz|=\lambda_{\D}(z) \, |dz|\, , \qquad \lambda_n(z)\, |dz| =f_n^* \lambda_{\D}(z) \, |dz|$$
     for holomorphic maps $f_n : \D \to \D$, since any $\lambda_n$ has only zeros of order at least $1$. We therefor  arrive at the following full sequential
    version of the Schwarz--Pick lemma at the boundary (Theorem \ref{thm:maindisk}).

  \begin{corollary}[The sequential Schwarz--Pick lemma at the boundary] \label{thm:schwarzpicksequence}
    Let $(f_n)$ be a sequence of holomorphic maps $f_n : \D \to \D$ such that
    $$ f_n^h(z_n)=1+o \left( (1-|z_n|)^2\right)$$
    for some sequence $(z_n)$ in $\D$ with $|z_n| \to 1$. Then
    \begin{itemize}
      \item[(a)]
    $f^h_n \to 1$ locally uniformly in $\D$, and
\item[(b)] every locally uniformly subsequential limit of
    $(f_n)$ is either a unit disk automorphism or a unimodular constant.
\end{itemize}
  \end{corollary}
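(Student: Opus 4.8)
The plan is to deduce this corollary directly from Theorem \ref{thm:ahl124}, with the choices $\mu=\lambda_{\D}$ (so that $\kappa\equiv-4$, hence $c=4$ and the error term $(1-|z_n|)^{c/2}=(1-|z_n|)^2$ matches the hypothesis) and $\lambda_n=f_n^*\lambda_{\D}$, the pulled--back pseudometric given by $\lambda_n(z)=|f_n'(z)|/(1-|f_n(z)|^2)$. First I would dispose of the trivial case where some $f_n$ is constant: then $\lambda_n\equiv 0$ and $f_n^h\equiv 0$, so such terms cannot satisfy the hypothesis for large $n$ (since $(1-|z_n|)^2\to 0$), and after discarding finitely many indices we may assume every $f_n$ is nonconstant. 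Each such $\lambda_n$ is then a genuine conformal pseudometric on $\D$ with $\kappa_{\lambda_n}\equiv-4$, so in particular $(\kappa_{\lambda_n})$ is uniformly bounded below; moreover $\lambda_n$ has only isolated zeros (the zeros of $f_n'$), and since $\kappa_{\lambda_n}\le-4=\kappa_{\lambda_{\D}}$ the ordinary Ahlfors--Schwarz lemma gives $\lambda_n\le\lambda_{\D}$, i.e. $\lambda_n\preceq\mu$. Finally, $\lambda_n(z_n)/\mu(z_n)=f_n^h(z_n)=1+o((1-|z_n|)^2)$ is precisely hypothesis \eqref{eq:b}. Thus Theorem \ref{thm:ahl124} applies.

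Now the key observation is that alternative (ii) of Theorem \ref{thm:ahl124} is impossible here: every zero of $\lambda_n=f_n^*\lambda_{\D}$ is a branch point of $f_n$, hence a zero of $f_n'$, which has integer order, so every zero of $\lambda_n$ has order $\alpha_{n_k}\ge 1$; this contradicts $\alpha_{n_k}\to 0$. (This is exactly the remark made in the paragraph preceding the corollary: a sequence of pullback pseudometrics has all zeros of order $\ge 1$.) Therefore alternative (i) must hold, which says $\lambda_n/\mu\to 1$ locally uniformly in $\D$; since $\lambda_n/\mu=f_n^h$, this is precisely statement (a).

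For statement (b), let $g$ be any locally uniform limit of a subsequence $(f_{n_k})$ (such subsequences exist by Montel's theorem, since the $f_n$ map into the bounded set $\D$); then $g:\D\to\oD$ is holomorphic. If $g$ is nonconstant, then $g(\D)\subset\D$ by the open mapping theorem, and $f_{n_k}'\to g'$ locally uniformly, so $f_{n_k}^h\to g^h$ locally uniformly in $\D$; combined with (a) this gives $g^h\equiv 1$ on $\D$, whence $g\in\Aut(\D)$ by the strong form of the classical Schwarz--Pick lemma (equality at an interior point). If $g$ is constant, say $g\equiv\zeta$, I claim $|\zeta|=1$: otherwise $\zeta\in\D$ and $f_{n_k}'\to 0$ locally uniformly, so $f_{n_k}^h\to 0$ locally uniformly, contradicting (a). Hence $g$ is a unimodular constant. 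This establishes the dichotomy in (b).

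I expect no serious obstacle here: the entire content has been pushed into Theorem \ref{thm:ahl124}, and the only point requiring a moment's care is the exclusion of alternative (ii), which rests on the integrality of orders of zeros of $f_n'$ — a consequence of the fact that $f_n'$ is holomorphic and not identically zero.
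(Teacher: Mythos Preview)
Your proof is correct and follows precisely the approach indicated in the paper: part (a) is exactly the observation made in the paragraph preceding the corollary (apply Theorem \ref{thm:ahl124} with $\mu=\lambda_{\D}$, $\lambda_n=f_n^*\lambda_{\D}$, and rule out alternative (ii) because all zeros of $f_n^*\lambda_{\D}$ have integer order $\ge 1$), while part (b) is the standard normal--families argument the paper leaves to the reader. There is nothing to correct.
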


    \begin{remark}[The strong Nehari--Schwarz lemma for sequences]
Choosing $\mu(z) \, |dz|$ as the pullback of $\lambda_{\D}(z) \, |dz|$ under
some finite (or maximal) Blaschke product in Theorem \ref{thm:ahl124} we
obtain in a similar way a sequential version of the Nehari--Schwarz lemma. We leave it to the interested reader to provide the corresponding statement and proof.
    \end{remark}

\section{Proof of Theorem \ref{thm:ahlforsdiskgeneral2}: A Harnack inequality for conformal pseudometrics}
\label{sec:5}

The  main ingredient for the proofs of the results in Section \ref{sec:3}  is the following 
Harnack--type estimate for negatively curved conformal pseudometrics.

  \begin{theorem}[Boundary Harnack inequality and higher--order Hopf lemma
    for pseudometrics] \label{lem:hopf}
    For any $r \in (0,1)$ there is a universal constant $C_r>0$ with the
    following property.    Let
    $\lambda(z) \, |dz|$ and $\mu(z) \, |dz|$ be   conformal pseudometrics on $\D$
    with $\kappa_{\mu}=\kappa$ for some   locally
H\"older continuous function $\kappa : \D \to \R$  such that
$-c \le \kappa(z) \le -4$  for some $c>0$. 
Suppose that $\lambda \preceq \mu$. Then
    \begin{equation} \label{eq:hopf2}
    \log \frac{\lambda(z)}{\mu(z)} \le  \frac{C_r}{(1-r^2)^{c/2}} \cdot \left(\max \limits_{|\xi|=r}
       \log \frac{\lambda(\xi)}{\mu(\xi)} \right) \left(
      1-|z|^2 \right)^{c/2} \,  \quad \text{ on }  \quad r \le |z|<1 \, .
    \end{equation}
    In particular, if $\lambda(z_0)/\mu(z_0)<1$ for one point $z_0 \in \D$, 
    then $\lambda(z)/\mu(z)<1$ for all $z \in \D$ and 
\begin{equation} \label{eq:hopf3}
  \limsup \limits_{|z| \to 1} \frac{\log
    \frac{\lambda(z)}{\mu(z)}}{(1-|z|)^{c/2}} <0 \, .
  \end{equation}
  \end{theorem}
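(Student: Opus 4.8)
The plan is to translate the statement into a comparison argument for a Schrödinger operator with non‑negative potential, and then to supply an explicit radial barrier.

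\emph{First, a differential inequality.} I would set $u:=\log(\lambda/\mu)$, interpreted via the continuous extension of $\lambda/\mu$ provided by $\lambda\preceq\mu$; then $u$ is upper semicontinuous on $\D$ with values in $[-\infty,0]$, and on $\{\lambda>0\}\cap\{\mu>0\}$ it is $C^2$ with $\Delta u=-\kappa_\lambda\lambda^2+\kappa\mu^2$. Using $\kappa_\lambda\le\kappa$ (hence $-\kappa_\lambda\lambda^2\ge-\kappa\lambda^2$), then $\kappa<0$, $\mu^2-\lambda^2=\mu^2(1-e^{2u})\ge0$ and $1-e^{2u}\le-2u$, then $-\kappa\le c$ and $u\le0$, and finally the Ahlfors lemma $\mu\le\lambda_\D$ (which uses only $\kappa_\mu\le-4$), I obtain
\[
\Delta u\ \ge\ \kappa\mu^2(1-e^{2u})\ \ge\ -2\kappa\mu^2u\ \ge\ 2c\,\mu^2u\ \ge\ 2c\,\lambda_\D^2\,u .
\]
By Lemma~\ref{lem:zerosisol} and elliptic regularity this inequality carries across the isolated common zeros of $\lambda$ and $\mu$ (at the other zeros of $\lambda$, $u=-\infty$, which is harmless). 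So $u$ is a subsolution on all of $\D$ of $L:=\Delta-2c\,\lambda_\D^2$, whose potential is $\ge0$; in particular $L$ enjoys the weak and strong maximum principles on relatively compact subdomains.

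\emph{Second, the radial barrier — the crux.} Fix $r\in(0,1)$ and put $M:=\max_{|\xi|=r}u(\xi)\le0$; if $M=0$ there is nothing to prove, so assume $M<0$. I would take $G_r$ to be the solution of the Dirichlet problem $LG_r=0$ in $A_r:=\{r<|z|<1\}$, $G_r=1$ on $|z|=r$, $G_r\to0$ at $|z|=1$. Because $2c\lambda_\D^2\ge0$, this solution exists (exhaust $A_r$ by $\{r<|z|<\rho\}$, $\rho\uparrow1$), is continuous up to $|z|=r$ and \emph{strictly positive} in $A_r$, and is squeezed, $0\le G_r\le(1-|z|^2)^{\gamma_+}/(1-r^2)^{\gamma_+}$, by the explicit supersolution $(1-|z|^2)^{\gamma_+}$ of $L$, where $\gamma_+:=\tfrac{1+\sqrt{1+2c}}2$; in particular $G_r\to0$ at $|z|=1$. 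By rotational symmetry $G_r(z)=g_r(|z|)$, where $g_r$ solves $[t\,g_r']'=\tfrac{2ct}{(1-t^2)^2}\,g_r$ with $g_r(r)=1$ and $g_r$ bounded; the regular singular point $t=1$ has indicial roots $\gamma_\pm=\tfrac{1\pm\sqrt{1+2c}}2$, so $g_r(t)\sim\kappa_r(1-t)^{\gamma_+}$ as $t\to1$ for some $\kappa_r>0$. Since $c\ge4$ forces $\gamma_+\le c/2$, the ratio $g_r(t)/(1-t^2)^{c/2}$ is bounded below on $[r,1)$ by a constant $c_r'>0$. The hard part is precisely this: \emph{no single power} $(1-|z|^2)^\beta$ is at once a radial subsolution of $\Delta-2c\lambda_\D^2$ on all of $[r,1)$ and of decay no faster than $(1-|z|^2)^{c/2}$, uniformly in $r$ and in $c\ge4$ (the borderline case $c=4$, where $\gamma_+=c/2$, being the tightest); this is what forces the Dirichlet/Sturm--Liouville construction, or equivalently a careful finite combination of the elementary barriers $e^{-\gamma v}$ ($v:=\log\tfrac{1+|z|}{1-|z|}$) and $(1-|z|^2)^\beta$, each of which solves the required inequality only on a suitable sub‑annulus.

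\emph{Third, the comparison and the Hopf conclusion.} Now $h_r:=M\,g_r(|z|)$ solves $Lh_r=0$ in $A_r$, satisfies $u\le M=h_r$ on $|z|=r$, and $\limsup_{|z|\to1}(u-h_r)\le0$ since $u\le0$ and $g_r(|z|)\to0$. Applying the weak maximum principle for $L$ on $\{r<|z|<\rho\}$ and letting $\rho\uparrow1$ gives $u\le h_r$ on $\{r\le|z|<1\}$, and together with $g_r(t)\ge c_r'(1-t^2)^{c/2}$ and $M<0$ this is exactly
\[
\log\frac{\lambda(z)}{\mu(z)}\ \le\ M\,c_r'\,(1-|z|^2)^{c/2}\ =\ \frac{C_r}{(1-r^2)^{c/2}}\Bigl(\max_{|\xi|=r}\log\frac{\lambda(\xi)}{\mu(\xi)}\Bigr)(1-|z|^2)^{c/2}
\]
with $C_r:=c_r'(1-r^2)^{c/2}>0$ independent of $\lambda,\mu$, which is (\ref{eq:hopf2}). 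Finally, if $\lambda(z_0)/\mu(z_0)<1$, i.e.\ $u(z_0)<0$, the strong maximum principle for $L$ prevents $u$ from attaining $0$ at an interior point unless $u\equiv0$, so $u<0$ throughout $\D$; taking any $r$ with $|z_0|<r<1$ we get $M=\max_{|\xi|=r}u<0$, and (\ref{eq:hopf2}) with $(1-|z|^2)^{c/2}/(1-|z|)^{c/2}\to2^{c/2}$ yields $\limsup_{|z|\to1}\bigl(\log\tfrac{\lambda(z)}{\mu(z)}\bigr)/(1-|z|)^{c/2}\le 2^{c/2}c_r'M<0$, which is (\ref{eq:hopf3}).
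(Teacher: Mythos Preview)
Your overall strategy---derive the differential inequality $\Delta u\ge\frac{2c}{(1-|z|^2)^2}u$ for $u=\log(\lambda/\mu)$, build a positive radial barrier on the annulus $\{r\le|z|<1\}$, and apply the maximum principle---is exactly the paper's. The differences are in the execution of the barrier and in the treatment of the zeros of $\mu$, and both leave gaps relative to the statement as written.

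First, your constant $C_r:=c_r'(1-r^2)^{c/2}$ is not shown to be independent of $c$; but the theorem asserts a \emph{universal} $C_r$, chosen before $\lambda,\mu,\kappa,c$. The paper bypasses your abstract Dirichlet/Sturm--Liouville solution entirely by exhibiting the explicit barrier
\[
v_r(z)=(1-|z|^2)^{c/2}\,e^{(1-|z|^2)/r^2}
\]
and checking by a direct polynomial calculation that $\Delta v_r\ge\frac{2c}{(1-|z|^2)^2}v_r$ on $r\le|z|<1$. The exponential factor is precisely the ``careful correction'' you allude to, and it yields $C_r=e^{1-1/r^2}$ at once. So your claim that the failure of the pure power ``forces the Dirichlet/Sturm--Liouville construction'' is not right: multiplying by $e^{(1-|z|^2)/r^2}$ already suffices. (Your $g_r$ does dominate $v_r/v_r(r)$ by the comparison principle for $L$, so your argument can be repaired to give the universal constant, but you have not carried this out.)

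Second, your sentence ``by Lemma~\ref{lem:zerosisol} and elliptic regularity this inequality carries across the isolated common zeros of $\lambda$ and $\mu$'' glosses over exactly the step where the local H\"older continuity of $\kappa$ is actually used. The difficulty is that the interior maximum of $w_r:=u+\varepsilon v_r$ may fall at a zero $z_0$ of $\mu$, where $u$ is not $C^2$ and one cannot simply evaluate $\Delta w_r(z_0)$. The paper resolves this by solving, on a small disk $K\ni z_0$, the auxiliary equation $\Delta\log v=-\kappa(z)\,|z-z_0|^{2\alpha}v^2$ with boundary data $\lambda/|z-z_0|^{\alpha}$ (existence uses H\"older continuity of $\kappa$), proving the sandwich $\lambda/|z-z_0|^{\alpha}\le v\le \mu/|z-z_0|^{\alpha}$ on $K\setminus\{z_0\}$, and then running the maximum principle on the genuinely $C^2$ function $\tilde w_r:=\log v-\log(\mu/|z-z_0|^{\alpha})+\varepsilon v_r$ on $K$. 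This is not a formality, and your proposal does not supply an argument at this level.
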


  \begin{remark}
Inequality (\ref{eq:hopf2}) is a Harnack--type estimate for conformal pseudometrics ``up to the
boundary''. Condition (\ref{eq:hopf3}) might be viewed as boundary Hopf--lemma of higher order.
  \end{remark}

  \begin{remark} \label{rem:hopf}
    The proof will show that we can take for instance 
\begin{equation} \label{eq:harnackconst}
    C_r=e^{1-1/r^2} \, .
    \end{equation}
     The proof of Theorem 
    \ref{eq:hopf2} will also show that if $\mu(z) \, |dz|$ is a conformal
    \textit{metric}, so $\mu>0$ throughout $\D$, then the statement of Theorem
    \ref{lem:hopf} stays valid if  $\kappa : \D \to \R$
    is merely continuous instead of being locally H\"older continuous. 
\end{remark}

A straightforward consequence of Theorem \ref{lem:hopf} is the
following result.

\begin{corollary} \label{cor:hopf}
Let $\kappa : \D \to \R$ be a locally
H\"older continuous function  with
$\kappa(z) \le -4$ for all $z \in \D$  and let
$0<r<R<1$. Then there is a positive constant $C=C(r,R,\kappa)$ such that
\begin{equation} \label{eq:hopf4}
\frac{\lambda(z)}{\mu(z)} \le \max \limits_{|\xi|=r} \left(
  \frac{\lambda(\xi)}{\mu(\xi)} \right)^C\,  \quad \text{ on }  \quad r \le
|z| \le R
\end{equation}
for all conformal pseudometrics $\lambda(z) \, |dz|$ and $\mu(z) \, |dz|$ with
$\kappa_{\mu}=\kappa$ and  $\lambda \preceq \mu$.
  In particular, if $\lambda(z_0)/\mu(z_0)<1$ for one point $z_0 \in \D$, 
    then $\lambda(z)/\mu(z)<1$ for every $z \in \D$.
  \end{corollary}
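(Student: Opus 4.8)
The plan is to derive Corollary~\ref{cor:hopf} from Theorem~\ref{lem:hopf} by localizing to a subdisk compactly contained in $\D$. The only gap between the two statements is that Theorem~\ref{lem:hopf} requires a two-sided bound $-c\le\kappa\le-4$ on all of $\D$, whereas in the corollary only $\kappa\le-4$ is assumed; but on any closed disk $\{|z|\le R'\}$ with $R<R'<1$ the locally Hölder (hence continuous) function $\kappa$ is automatically bounded below, which is exactly what is needed.

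Concretely, I would fix once and for all $R':=(1+R)/2$, so that $0<r<R<R'<1$, and pass to the disk $K_{R'}(0)$ via the scaling $\phi(w)=R'w$, which is a biholomorphism of $\D$ onto $K_{R'}(0)$. Set $\tilde\lambda:=\phi^*\lambda$ and $\tilde\mu:=\phi^*\mu$, i.e. $\tilde\lambda(w)=R'\lambda(R'w)$, $\tilde\mu(w)=R'\mu(R'w)$; these are conformal pseudometrics on $\D$. Since curvature transforms by composition, $\kappa_{\tilde\mu}(w)=\kappa(R'w)=:\tilde\kappa(w)$ is locally Hölder continuous on $\D$, and $-\tilde c\le\tilde\kappa\le-4$ with $-\tilde c:=\min_{|z|\le R'}\kappa(z)$, a finite number by continuity of $\kappa$ on the compact set $\{|z|\le R'\}$; note $\tilde c\ge4>0$. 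Because $\preceq$ is conformally invariant (curvature pulls back by composition, $\tilde\lambda/\tilde\mu=(\lambda/\mu)\circ\phi$, and $\tilde\mu$ inherits isolated zeros from $\mu$ via the biholomorphism $\phi$), we have $\tilde\lambda\preceq\tilde\mu$. Now I apply Theorem~\ref{lem:hopf} to the pair $\tilde\lambda,\tilde\mu$ with pinching constant $\tilde c$ and radius parameter $\rho:=r/R'\in(0,1)$, obtaining \eqref{eq:hopf2} for $\rho\le|w|<1$.

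Translating this back via $w=z/R'$: for $r\le|z|\le R$ one has $\rho\le|w|\le R/R'<1$, $\tilde\lambda(w)/\tilde\mu(w)=\lambda(z)/\mu(z)$, and $\max_{|\eta|=\rho}\log(\tilde\lambda(\eta)/\tilde\mu(\eta))=\max_{|\xi|=r}\log(\lambda(\xi)/\mu(\xi))=:M_r\le0$, the last inequality since $\lambda\preceq\mu$ forces $\lambda/\mu\le1$. The coefficient $C_\rho/(1-\rho^2)^{\tilde c/2}$ is a positive constant, $M_r\le0$, and $(1-|w|^2)^{\tilde c/2}$ stays between the positive constant $(1-(R/R')^2)^{\tilde c/2}$ and $1$ on this annulus; since a nonpositive quantity times a factor bounded below by a positive constant is $\le$ the quantity times that constant, the right-hand side of \eqref{eq:hopf2} is bounded above by $C\,M_r$, where $C:=C_{r/R'}\left(\tfrac{1-(R/R')^2}{1-(r/R')^2}\right)^{\tilde c/2}>0$ depends only on $r,R$ and $\kappa$. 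Hence $\log(\lambda(z)/\mu(z))\le C\,M_r=\log\max_{|\xi|=r}(\lambda(\xi)/\mu(\xi))^{C}$, and exponentiating yields \eqref{eq:hopf4}, the case $\lambda(z)=0$ being trivial as the right-hand side is $\ge0$. The ``in particular'' assertion follows the same way: if $\lambda(z_0)/\mu(z_0)<1$, then for an arbitrary $z\in\D$ choose $R'$ with $\max(|z_0|,|z|)<R'<1$; the rescaled pair $\tilde\lambda\preceq\tilde\mu$ satisfies the hypotheses of Theorem~\ref{lem:hopf} and $\tilde\lambda(z_0/R')/\tilde\mu(z_0/R')<1$, so by the ``in particular'' clause there $\tilde\lambda/\tilde\mu<1$ throughout $\D$, whence $\lambda(z)/\mu(z)<1$.

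I do not expect a serious obstacle here, since all the analytic content sits in Theorem~\ref{lem:hopf}. The two points that need a little care are (i) checking that the relation $\preceq$ and the curvature bounds survive the holomorphic change of variables $\phi$ — routine once one recalls that curvature is a conformal invariant — and (ii) getting the sign bookkeeping right when converting the $(1-|z|^2)^{c/2}$-type boundary factor in \eqref{eq:hopf2} into the honest exponent $C$ in \eqref{eq:hopf4}, which works precisely because $M_r\le0$.
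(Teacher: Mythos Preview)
Your proof is correct and follows essentially the same route as the paper's: both rescale by a factor $\varrho\in(R,1)$ (you fix $\varrho=R'=(1+R)/2$, the paper leaves $\varrho$ arbitrary) so that the pulled-back curvature is pinched on all of $\D$, apply Theorem~\ref{lem:hopf} at radius $r/\varrho$, and then use $M_r\le 0$ together with the lower bound $(1-|w|^2)^{\tilde c/2}\ge(1-(R/\varrho)^2)^{\tilde c/2}$ on the annulus to absorb the boundary factor into the constant $C$. Your resulting constant $C=C_{r/R'}\big((1-(R/R')^2)/(1-(r/R')^2)\big)^{\tilde c/2}$ agrees with the paper's $C_{r/\varrho}\big((\varrho^2-R^2)/(\varrho^2-r^2)\big)^{c_\varrho/2}$ after clearing the $\varrho^2$.
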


 \begin{remark}   \label{rem:hopf1}
Under the assumptions of Corollary \ref{cor:hopf} the conclusion that
$\lambda(z_0)/\mu(z_0)=1$ for one point $z_0 \in \D$ implies $\lambda=\mu$
everywhere has been proved before in \cite[Theorem 2.2]{KR2013}  in the special case
$\kappa=-4$. This has provided the main step for handling the case of interior
equality in the general Nehari--Schwarz lemma. 
 The main new aspect of
    Corollary \ref{cor:hopf} is twofold. First, it extends the main result of \cite{KR2013}  to the case of
    \textit{variable} (strictly negative) curvature. Second, it  provides the  quantitative bound
    (\ref{eq:hopf4}), which is essential for proving the  \textit{sequential} version of the strong
    form of Nehari--Schwarz lemma. For such purposes, good control of the
    ``Harnack constant'' $C(r,R,\kappa)$ will be essential. In fact, the proof
    will show that the Harnack constant $C(r,R,\kappa)$ in Corollary \ref{cor:hopf} 
    can be chosen as
    $$ C(r,R,\kappa):=\exp \left( 1-\frac{\varrho^2}{r^2} \right) \left( \frac{\varrho^2-R^2}{\varrho^2-r^2} \right)^{c_{\varrho}(\kappa)/2} \, , \qquad
 c_{\varrho}(\kappa):= -\min \limits_{|z| \le \varrho} \kappa(z)>0 \, , $$ for some fixed
    $\varrho \in (R,1)$. For later purpose we note that this Harnack constant  is monotonically decreasing with respect to $c_{\varrho}(\kappa)$.
      \end{remark}

\begin{proof}[Proof of Theorem \ref{lem:hopf}]
  The proof is divided into several steps.

  \medskip
(i)   We consider
    
  $$ u_1(z):=\log \lambda(z) \, , \quad  u_2(z):=\log \mu (z)$$
  and
  $$ u(z):=u_1(z)-u_2(z)=\log \frac{\lambda(z)}{\mu(z)} \le 0  \, $$
  for $z \in \D$.
Let $D:=\D \setminus \{ z \in \D
  \, : \, \lambda(z)=0\}$. We show that $u$ is a solution of the differential inequality $\Delta u \ge 2c \lambda_{\D}(z)^2 u$ on $D$.
  Since $$\Delta u_1 =-\kappa_{\lambda}(z) e^{2u_1} \ge -\kappa(z)  e^{2 u_1}
\quad \text{ on } D $$ and
$$\Delta u_2 =-\kappa(z) e^{2 u_2} \quad \text{ on } D \, , $$ we have
$$ \Delta u \ge -\kappa(z) \left( e^{2 u_1}-e^{2 u_2} \right) =-\kappa(z) e^{2
  u_2} \left( e^{2 u}-1 \right) \quad \text{ on } \,D \, .$$
Using the elementary inequality $e^{2 y}-1 \ge 2 y$, which is valid for all
 $y \in \R$, together with $\kappa(z) \le 0$, we obtain
$$ \Delta u \ge -2 \kappa(z) e^{2 u_2} u \quad \text{ on } \, D \, .$$
Since 
$$ e^{u_2(z)}=\mu(z)  \le \lambda_{\D}(z)=\frac{1}{1-|z|^2} \,  $$
by the Ahlfors--Schwarz lemma, we deduce from  $\kappa(z) \ge -c$ and $u
\le 0$ that
\begin{equation} \label{eq:1}
\Delta u \ge  \frac{2 c }{(1-|z|^2)^2} u \quad \text{ on } \, D \, . 
\end{equation}
(ii) 
We now fix $r \in (0,1)$ and show that
$$ v_r(z)=(1-|z|^2)^{c/2} e^{(1-|z|^2)/r^2} \,  $$
is an explicit solution of the partial
differential inequality (\ref{eq:1}) on the annulus $r\le |z|<1$.
In order to prove
\begin{equation} \label{eq:2}
\Delta v_r \ge \frac{2 c}{(1-|z|^2)^2} v_r  \qquad \text{ for all } r \le |z| <1 \, ,
\end{equation}
we first observe 
$$ \frac{\Delta v_r(z)}{v_r(z)} (1-|z|^2)^2=f(|z|^2)$$
where
$$ f(x):=\frac{4 x^3-4 (2+(1+c) r^2) x^2+(4 +4(2+c) r^2+c^2 r^4) x- 2r^2 (2+
  cr^2)}{r^4} \, .$$
 Now
$$ f'(x)=\frac{12 x^2-8 (2+(1+c)r^2) x+c^2 r^4+4 (2+c)r^2+4}{r^4}$$
has a zero at the point
$$ x_r:=\frac{4+2 (1+c) r^2+\sqrt{4+4 (c-2)r^2+(4+8 c+c^2) r^4}}{6}\, .$$
Since $c \ge 4$ and $r > 0$, we see that
$$x_r\ge \frac{2+5
  r^2+\sqrt{1+2 r^2+13 r^4}}{3} >1 \, .$$
This means that the cubic polynomial $f$ has its unique local minimum at the
point $x_r>1$. Using again $c \ge 4$, we have
$$ f(r^2)=2c+c (c-4) r^2 \ge 2c \quad \text{ and } \quad f(1)=(c-2) c \ge 2c
\, .$$
Hence 
we get that $f(x) \ge 2c $ for all
$r^2 \le x \le 1$ proving (\ref{eq:2}).

\medskip

(iii) We introduce an auxiliary function $w_r$ and show that the claim (\ref{eq:hopf2}) follows from $w_r(z) \le 0$ for all $r \le |z| <1$.
Let
$$ \eps:=\eps_r:=-\frac{1}{v_r(r)} \cdot\max \limits_{|\xi|=r} u(\xi) \ge 0 \, $$
and
$$ w_r:=u+\eps v_r \, .$$
By construction, 
$$ w_r(z) \le 0 \quad \text{ for all } |z|=r $$
and since $v_r(z)=0$ for $|z|=1$, we also have
$$\limsup_{|z| \to 1} w_r(z) =\limsup_{|z| \to 1} u(z) \le 0\, .$$
We claim that
\begin{equation} \label{eq:claim}
w_r(z)  \le 0  \quad \text{ for all } r \le |z|<1 \, .
\end{equation}
This gives
$$ \frac{u(z)}{(1-|z|^2)^{c/2}}=\frac{w_r(z)}{(1-|z|^2)^{c/2}}-\eps \frac{v_r(z)}{(1-|z|^2)^{c/2}} \le
-\eps \frac{v_r(z)}{(1-|z|^2)^{c/2}}
= -\eps e^{(1-|z|^2)/r^2} \le -\eps$$
for all $r \le |z|<1$, which is the estimate (\ref{eq:hopf2}).

\medskip

In order to prove (\ref{eq:claim}), we assume that
 $w_r$ is  positive somewhere in $r \le |z| <1$. Then $w_r$ 
attains its maximal value $w_r(z_0)>0$ on $r \le |z| \le 1$ at some point $z_0$ with
$r<|z_0|<1$. The goal is to show that this is not possible.

\medskip

(iv) We first derive a contradiction for the case that $z_0$ is not a zero of $\mu$. In this case $z_0
\in D$ and so
$$ 0 \ge \Delta w_r(z_0)=\Delta u(z_0)+\eps \Delta v_r(z_0) \ge
\frac{2c }{(1-|z_0|^2)^2} w_r(z_0)>0 \, ,$$
where we have used (\ref{eq:1}) and (\ref{eq:2}). This contradiction shows
that (\ref{eq:claim}) holds. In particular, if $\mu(z) \, |dz|$ is
a conformal metric, the proof of (\ref{eq:hopf2}) is finished, and we do
not have to use the local H\"older continuity of $\kappa$.

\medskip

(v) We now show that $w_r(z_0)>0$ is not possible when $z_0$ is a zero of $\mu(z) \, |dz|$ of order $\alpha>0$, say. In view of $\lambda
\preceq \mu$, the function 
$ \lambda/\mu$ has a continuous extension to $z_0$. Since $w_r(z_0)>0$, we see
that
$$ \lim \limits_{z \to z_0} \frac{\lambda(z)}{\mu(z)}>0 \, .$$
In particular, $z_0$ is an isolated zero of $\lambda(z) \, |dz|$ of order $\alpha$, so there is
an open
disk $K$, which is compactly contained in $r <|z|<1$ such
that $z_0 \in K$ and $\lambda>0$ on $\overline{K} \setminus \{z_0\}$.
We are confronted with the problem that we do not know a priori whether
$\lambda/\mu$ has an $C^2$--extension to $z_0$. In order to deal with this problem
we  make use of the
assumption  that
$\kappa : \D \to \R$ is locally H\"older continuous. Then standard elliptic
PDE theory shows that there is a unique continuous positive function $v : \overline{K}
\to \R$, which is of class $C^2$ in $K$, such that
$$ \Delta \log v=-\kappa(z) |z-z_0|^{2 \alpha} v(z)^2 \, , \qquad z \in K \, , $$
and 
  $$ v(\xi)=\frac{ \lambda(\xi)}{|\xi-z_0|^{\alpha}} \, , \qquad \xi \in
  \partial K \, .$$
  We claim that
  \begin{equation} \label{eq:claim2}
 \frac{\lambda(z)}{|z-z_0|^{\alpha}} \le v(z) \le
 \frac{\mu(z)}{|z-z_0|^{\alpha}}  \, , \qquad z \in K \setminus\{z_0\} \, .
\end{equation}
For the proof of the inequality on the right--hand side we note  that
$$ \log \frac{\mu(z)}{|z-z_0|^{\alpha}} \, , \qquad z \in K \setminus\{z_0\}
\, , $$
has a $C^2$--extension $\tilde{u}: K \to \R$ by \cite[Theorem 1.1]{KR2008} and therefore
$$ \Delta \tilde{u}(z)=-\kappa(z) |z-z_0|^{2 \alpha} e^{2 \tilde{u}(z)} \, ,\qquad z
\in K \, .$$
Hence $\tilde{u}$ and $\log v$ are solutions to the same PDE.
Since $\tilde{u}(\xi) \ge \log v(\xi)$ for each $\xi \in \partial K$,  the
maximum principle applied to this PDE easily shows that $v(z) \le
e^{\tilde{u}(z)}$ for all $z \in K$.  This proves the right--hand side of (\ref{eq:claim2}). In order to prove the inequality on the left--hand side of
 (\ref{eq:claim2}), we consider
$$ s(z):=\max \left\{0, \log \frac{\lambda(z)}{|z-z_0|^{\alpha} v(z)} \right\}
\, , \qquad z \in K\setminus\{z_0\} \, .$$
If $z \in K \setminus \{z_0\}$ such that $s(z)>0$, then
$$ \Delta s(z)=\Delta \log \lambda(z)-\Delta \log  v(z) \ge -\kappa(z) \left[
  \lambda(z)^2-|z-z_0|^{2\alpha} v(z)^2 \right] \ge 0 \, ,$$
and  $s$ is subharmonic on $K \setminus \{z_0\}$. Moreover, $s$ is  bounded above at $z_0$ since
$$ \limsup \limits_{z \to z_0} s(z)  \le \limsup \limits_{z \to z_0} \log
\frac{\lambda(z)}{|z-z_0|^{\alpha} v(z)}<\infty \, , $$
which follows from the facts that $\lambda(z) \, |dz|$ has a zero of order
$\alpha$ at $z_0$ and $v(z_0)>0$.
Therefore, the function $s$ has a subharmonic extension to $K$ with vanishing boundary values, so $s \le
0$ by  the maximum principle for subharmonic functions (\cite[Theorem 2.3.1]{Ransford1995}). The proof of (\ref{eq:claim2}) is complete. 

\medskip

We now consider the auxiliary function
$$ \tilde{w}_r(z):=\log v(z)-\tilde{u}(z)+\eps v_r(z) \, \qquad z \in
\overline{K} \, ,$$
and observe that $\tilde{w}_r$ is continuous on $\overline{K}$ and of class
$C^2$ in $K$. In addition,
$\tilde{w}_r=w_r$ on $\partial K$ as well as $\tilde{w}_r \ge
w_r$ in $K$ in view of the left--hand side of (\ref{eq:claim2}).
Since we have assumed that $w_r$  attains its  positive maximal value on $\overline{K}$
at $z_0 \in K$, we see that $\tilde{w}_r$ attains its positive maximal value on $\overline{K}$
at some point $z_1 \in K$. Now the same computation as in (iv) leads to
$$ 0 \ge \Delta \tilde{w}_r(z_1) \ge \frac{2c}{(1-|z_1|^2)^2} \tilde{w}_r(z_1) >0
\, ,$$
a contradiction. Hence (\ref{eq:claim}) is proved also in the case that $z_0$
is a zero of $\mu(z) \, |dz|$, and
we have completely proved (\ref{eq:hopf2}).

\medskip

(vi) We now prove that either $\lambda/\mu \equiv 1$ in $\D$ or $\lambda/\mu<1$ troughout $\D$.
If $\lambda(z_0)/\mu(z_0)<1$ for one point $z_0 \in \D$,  we
put
$$ T(z):=\frac{z_0-z}{1-\overline{z_0} z}$$
and consider 
$$ \tilde{\mu}(z) \, |dz|:=T^* \mu(z) \, |dz| \, , \quad \tilde{\lambda}(z) \,
|dz|:=T^* \lambda(z) \, |dz| \, .$$
Then $\tilde{\mu}(z) \, |dz|$ is a conformal pseudometric with curvature
$\kappa(T(z)) \in [-c,-4]$ and $\tilde{\lambda}(z) \, |dz|$ is a conformal pseudometric
with curvature $\kappa_{\lambda}(T(z)) \le \kappa_{\tilde{\mu}}(z)$ such that  
$$\frac{\tilde{\lambda}(0)}{\tilde{\mu}(0)}=\frac{\lambda(z_0) |T'(z_0)|}{\mu(z_0)
|T'(z_0)|}<1\, .$$ By continuity,  $\tilde{\lambda}(\xi)/\tilde{\mu}(\xi)<1$ on
each circle $|\xi|=r$ provided that $r \in (0,1)$ is sufficiently
small. Hence, we can apply  (\ref{eq:hopf2}) for $\tilde{\lambda}$ and
$\tilde{\mu}$ and deduce
$$\tilde{\lambda}(z)/\tilde{\mu}(z)<1 \quad \text{ for all } r \le |z| <1$$
for any sufficiently small $r>0$. This proves $\lambda/\mu<1$ throughout
$\D$ and (\ref{eq:hopf3}) follows immediately from (\ref{eq:hopf2}).
\end{proof}

\begin{proof}[Proof of Corollary \ref{cor:hopf}]
  Choose $\varrho \in (R,1)$. Then Theorem \ref{lem:hopf} applied to the conformal pseudometrics
  $$ \varrho \lambda(\varrho z) \, |dz| \quad \text{ and } \quad \varrho \mu(\varrho z) \, |dz| $$ implies
  $$ \log \frac{\lambda(\varrho z')}{\mu(\varrho z')} \le C_{r/\varrho} \left( \frac{1-|z'|^2}{1-(r/\varrho)^2} \right)^{c_{\varrho}/2} \max \limits_{|\xi|=r/\varrho} \log \frac{\lambda(\varrho \xi)}{\mu(\varrho \xi)} \, , \qquad r/\varrho \le |z'|<1 \, , $$
  where
  $$c_{\varrho}=-\min \limits_{|z'| \le 1} \kappa_{\mu}(\varrho z')=-\min \limits_{|z| \le \varrho } \kappa(z) \, .$$
  Replacing $\varrho z'$ by $z$ so that $r \le |z| \le R<\varrho$ proves (\ref{eq:hopf4}) with
  $$C=C_{r/\varrho} \left(\frac{\varrho^2-R^2}{\varrho^2-r^2}
  \right)^{c_{\varrho}/2}= \exp \left( 1-\frac{\varrho^2}{r^2} \right) \left(\frac{\varrho^2-R^2}{\varrho^2-r^2}
  \right)^{c_{\varrho}/2}
  $$
by (\ref{eq:harnackconst}).

  \end{proof}

\begin{proof}[Proof of Theorem \ref{thm:ahlforsdiskgeneral2}]
 If $\lambda \not \equiv \mu$, then 
Theorem \ref{lem:hopf} implies 
$$ \limsup \limits_{n \to \infty} \frac{\log
  \frac{\lambda(z_n)}{\mu(z_n)}}{\left(1-|z_n|^2 \right)^{c/2}} <0 \, , $$
which contradicts $$ \frac{\lambda(z_n)}{\mu(z_n)}=1+ o\left((1-|z_n|)^{c/2}\right)\, .$$
\end{proof}

\begin{remark} \label{rem:golusin}
  Theorem \ref{lem:hopf}  extends a series of earlier results due to Golusin
  (see \cite[Theorem 3]{Golusion1945} or \cite[p.~335]{Go}, and  Yamashita
  \cite{Yamashita1994,Yamashita1997}, Beardon  \cite{Beardon1997}, Beardon \&
  Minda \cite{BeardonMinda2004}) and Chen \cite{Chen}. All these results are
  concerned with the special case $\mu=\lambda_{\D}$ and either
  $\kappa_{\lambda} \equiv -4$ (Golusin, Yamashita, Beardon, Beardon--Minda)
  or $\kappa_{\lambda} \le -4$ (Chen) of Theorem \ref{lem:hopf}, and their
  proofs make essential  use of the situation that the dominating metric is $\lambda_{\D}(z) \, |dz|$. In particular, these results do not cover for instance the case of the Nehari--Schwarz lemma, but Theorem \ref{lem:hopf} does.
They are sufficient, however,  for proving for instance the boundary
Schwarz--Pick lemma (Theorem \ref{thm:maindisk}) in the same way as Theorem
\ref{lem:hopf} can be used to prove Theorem \ref{thm:ahlforsdiskgeneral2}. The referee has kindly pointed out that Theorem \ref{thm:maindisk} can also be deduced from the main result in \cite{CS}.

  In fact, the result of Golusin \cite{Golusion1945} is of a sligthly  different nature when compared to  (\ref{eq:hopf2}).
  It can be rephrased as
  \begin{equation} \label{eq:golusin}
 \frac{\lambda(z)}{\lambda_{\D}(z)} \le \frac{\lambda(0)+\displaystyle \frac{2
     |z|}{1+|z|^2}}{1+\lambda(0)\displaystyle  \frac{2 |z|}{1+|z|^2}}   \quad
 \text{ for all } |z|<1\,  ,
 \end{equation}
 for every conformal pseudometric $\lambda(z) \, |dz|$ with  curvature
$\kappa_{\lambda}=-4$. This inequality has been rediscovered many years later
by Yamashita \cite{Yamashita1994, Yamashita1997} and independently by Beardon
\cite{Beardon1997} and Beardon--Minda \cite{BeardonMinda2004} as part of their
elegant work on multi--point Schwarz--Pick lemmas. With hindsight, Golusin's
inequality (\ref{eq:golusin}) is exactly the case $w=0$ in Corollary 3.7 of
\cite{BeardonMinda2004}. Note that (\ref{eq:golusin}) gives an estimate for
$\lambda/\lambda_{\D}$ on the entire unit disk in terms of its values at the
origin while (\ref{eq:hopf2}) provides an estimate for $\lambda/\lambda_{\D}$
``only'' on each annulus $r \le |z|<1$ in terms of  its maximal value on the inner circle $|z|=r$.  The reason for this difference is that (\ref{eq:hopf2}) is valid for all pseudometrics $\lambda(z) \, |dz|$ with curvature $\le -4$ while Golusin's inequality only holds for  pseudometrics $\lambda(z) \, |dz|$ with constant curvature $-4$. 
In fact, the following result  shows that for the case $\kappa_{\lambda} \le
-4$ it  is not possible to prove an upper bound for $\lambda/\lambda_{\D}$  on
the entire disk in terms of $z$ and of $\lambda(0)$ which for $\lambda(0)<1$ is better than the
estimate $\lambda/\lambda_{\D} \le 1$ coming from Ahlfors' lemma.  In particular, 
  there is no ``Ahlfors extension'' of the
Golusin--Yamashita--Beardon--Minda inequality to conformal pseudometrics with
curvature $\le -4$ in the same sense as Ahlfors' lemma extends the Schwarz--Pick inequality.
\end{remark}

 \begin{proposition} \label{prop:5.7}
   For each $a \in (0,1]$ let
   $$ \mathcal{F}_a:=\left\{ \lambda(z) |dz| \, : \, \lambda(z) |dz| \text{ conformal pseudometric on } \D \text{ s.t. } \kappa_{\lambda} \le -4 \text{ and } \lambda(0) \le a \right\}\, .$$
   Then
$$   \sup \limits_{\lambda \in \mathcal{F}_a} \lambda(z)=\lambda_{\D}(z)$$
for every $z \in \D\setminus\{0\}$.
\end{proposition}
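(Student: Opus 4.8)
The plan is to prove the two inequalities $\sup_{\lambda\in\mathcal F_a}\lambda(z)\le\lambda_{\D}(z)$ and $\sup_{\lambda\in\mathcal F_a}\lambda(z)\ge\lambda_{\D}(z)$ separately, for each fixed $z\in\D\setminus\{0\}$. The first is nothing but Ahlfors' lemma: every $\lambda(z)\,|dz|\in\mathcal F_a$ has curvature $\kappa_\lambda\le-4$, hence $\lambda(w)\le\lambda_{\D}(w)$ for all $w\in\D$, and in particular $\lambda(z)\le\lambda_{\D}(z)$. So the whole content of the proposition lies in the reverse estimate, i.e.\ in exhibiting, for fixed $z\neq0$, conformal pseudometrics in $\mathcal F_a$ whose value at $z$ comes arbitrarily close to $\lambda_{\D}(z)$.

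The key idea is to use pseudometrics that \emph{vanish} at the origin: this makes the normalization $\lambda(0)\le a$ automatic (indeed $\lambda(0)=0$), so one is free to arrange $\lambda$ to be ``as large as possible'' away from $0$, namely locally isometric to the Poincar\'e metric. The natural candidates are precisely the family already encountered in Example~\ref{exa:ahlforssequence0},
$$
\mu_\alpha(w)\,|dw|:=\frac{(1+\alpha)\,|w|^{\alpha}}{1-|w|^{2(1+\alpha)}}\,|dw|\,,\qquad\alpha\in(0,1)\,.
$$
Since $\mu_\alpha$ is the local pull--back of $\lambda_{\D}$ under the (locally defined) map $w\mapsto w^{1+\alpha}$, which is a local biholomorphism on $\D\setminus\{0\}$, one has $\kappa_{\mu_\alpha}\equiv-4$ wherever $\mu_\alpha>0$; moreover $\mu_\alpha$ is continuous on $\D$, smooth and positive on $\D\setminus\{0\}$, and $\mu_\alpha(0)=0$. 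Hence $\mu_\alpha(w)\,|dw|$ is a conformal pseudometric in the sense of this paper with $\kappa_{\mu_\alpha}\le-4$ and $\mu_\alpha(0)=0\le a$, so $\mu_\alpha\in\mathcal F_a$ for every $\alpha\in(0,1)$.

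It then remains to let $\alpha\to0^{+}$: for a fixed $z\in\D\setminus\{0\}$ we have $(1+\alpha)\to1$, $|z|^{\alpha}\to1$ and $|z|^{2(1+\alpha)}\to|z|^{2}$, whence
$$
\mu_\alpha(z)\longrightarrow\frac{1}{1-|z|^{2}}=\lambda_{\D}(z)\qquad(\alpha\to0^{+})\,.
$$
Therefore $\sup_{\lambda\in\mathcal F_a}\lambda(z)\ge\lambda_{\D}(z)$, and combined with Ahlfors' lemma this yields the asserted equality $\sup_{\lambda\in\mathcal F_a}\lambda(z)=\lambda_{\D}(z)$.

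I do not expect any genuine obstacle once the right family $\{\mu_\alpha\}$ is identified; the only points worth a line of verification are that each $\mu_\alpha$ really is an admissible conformal pseudometric (continuity at $0$, $C^2$ and curvature $-4$ off the origin) and that the defining inequality ``$\kappa_\lambda\le-4$'' is tested only at points where $\lambda>0$, so that the zero of $\mu_\alpha$ at the origin — which has fractional order $\alpha<1$ — is harmless. It is precisely this fractional order that lets $\mu_\alpha$ stay away from the origin while remaining curvature $-4$, and hence that prevents any Golusin--type improvement of the bare bound $\lambda\le\lambda_{\D}$ coming from~(\ref{eq:golusin}); recording this phenomenon is exactly what Proposition~\ref{prop:5.7} is meant to do.
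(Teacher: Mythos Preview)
Your argument is correct and complete. The upper bound is indeed just Ahlfors' lemma, and for the lower bound you exhibit the explicit family $\mu_\alpha$ (already appearing in Example~\ref{exa:ahlforssequence0}) of constant curvature $-4$ pseudometrics vanishing at the origin, so that $\mu_\alpha\in\mathcal F_a$ for all $\alpha>0$, and $\mu_\alpha(z)\to\lambda_\D(z)$ as $\alpha\to0^+$ for each fixed $z\neq0$.

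This is a genuinely different and more elementary route than the paper's. The paper works from the other end: it considers the supremum $\lambda_a(z):=\sup_{\lambda\in\mathcal F_a}\lambda(z)$ directly, invokes the Perron method to show that $\lambda_a(z)\,|dz|$ is itself a conformal metric of constant curvature $-4$ on the punctured disk $\D\setminus\{0\}$, then appeals to Nitsche's removable--singularity theorem to extend it across $0$, and finally uses both directions of Ahlfors' lemma (the standard one and the ``reverse'' variant of \cite{KRR06}) to conclude $\lambda_a=\lambda_\D$ on $\D\setminus\{0\}$. Your approach avoids all of this machinery by simply writing down near--extremal competitors. What the paper's argument buys is the extra structural information that the supremum is \emph{attained} in the sense that $\lambda_a$ is itself a constant--curvature metric on the punctured disk; your argument gives the value of the supremum but not that it is realized by any single element of $\mathcal F_a$ (and indeed it is not, since $\lambda_\D(0)=1>a$ when $a<1$).
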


\begin{proof}
  Let
$$  \lambda_a(z):=\sup \limits_{\lambda \in \mathcal{F}_a} \lambda(z) \, , \qquad z \in \D \, .$$
Since obviously $a \lambda_{\D}(z) \, |dz|$ belongs to $\mathcal{F}_a$ we have $\lambda_a(0)=a$ and
$\lambda_a(z) \ge a \lambda_{\D}(z)$ for all $z \in \D$.
Using the Perron machinery (see \cite[Section 2.5]{K2013}) it is not difficult to show that
$\lambda_a(z) \, |dz|$ is a conformal metric with constant curvature $-4$ on
the \textit{punctured disk} $\D\setminus \{0\}$. A result of Nitsche
\cite{Nit57} then implies that $\lambda_a(z) \, |dz|$ extends continuously to
a conformal metric $\mu_a(z) \, |dz|$ on the entire disk $\D$ with constant curvature $-4$ there.
By a variant of  Ahlfors lemma (\cite[Theorem 2.1]{KRR06} or \cite{Mashreghi}), we see $\mu_a(z) \ge \lambda_{\D}(z)$ for all $z \in \D$ whereas Ahlfors' lemma itself shows $\mu_a \le \lambda_{\D}$ in $\D$.
In particular, $\lambda_a=\mu_a=\lambda_{\D}$ on $\D \setminus \{0\}$.
  \end{proof}

\begin{problem}
Perhaps  there is a
sharpening of the Nehari--Schwarz lemma in the spirit of Golusin's sharpening
of the Schwarz--Pick lemma: Let $B$ be a
(finite or not) maximal Blaschke product and $f$ a holomorphic selfmap of $\D$
such that $f'/B'$ has a holomorphic extension to $\D$. Is there an upper bound
for $f^h(z)/B^h(z)$ in terms of $f^h(0)/B^h(0)$ and $|z|$ which holds for
\textit{all} $z \in \D$ and which is better than $f^h/B^h \le 1$ if $f^h(0)/B^h(0)<1$\,?
 \end{problem}

\section{Proof of  Theorem \ref{thm:ahl124}: Hurwitz's theorem for conformal pseudometrics} \label{sec:proofsequences} \label{sec:6}

The main additional technical tool for proving Theorem \ref{thm:ahl124} is the following
rigidity property of   zeros of a sequence of pseudometrics.
It is reminiscent of Hurwitz's theorem about preservation of zeros of holomorphic functions.

\begin{theorem}[Rigidity of zeros] \label{prop:zeros}
  Let $\lambda_n(z) \, |dz|$, $n=1,2,\ldots$, and $\mu(z) \,|dz|$ be conformal
  pseudometrics on $\D$ so that each $\lambda_n(z) \, |dz|$ has only isolated zeros and
  $\kappa_{\mu}=\kappa$ for some locally H\"older continuous function $\kappa
  : \D \to \R$ with $\kappa(z) \le -4$ in $\D$. Suppose that
  $$ \lambda_n \preceq \mu \quad \text{ for any } n \in \N \qquad \text{ and } \quad \lim \limits_{n \to \infty} \frac{\lambda_n(z_n)}{\mu(z_n)}=1 \, $$
  for some sequence $(z_n)$ in $\D$ with $z_n \to z_0 \in \D$.
Then for every $\xi \in \D$ the following hold:
\begin{itemize}
\item[(a)] If $\beta \ge 0$ resp.~$\beta_n\ge 0$ denotes the order of  $\xi$ as a zero of 
 of $\mu(z) \,
|dz|$ resp.~$\lambda_n(z) \, |dz|$, then
$$\beta_n \to \beta \, .$$
\item[(b)] If  $\lambda_n(z) \,
 |dz|$ has a zero $\xi_n \in \D \setminus \{ \xi\}$ of order $\alpha_n \ge 0$
 such that $\xi_n \to \xi$, then $$\alpha_n \to 0 \, .$$
\end{itemize}
\end{theorem}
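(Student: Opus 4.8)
The plan is to use the quantitative Harnack inequality of Corollary \ref{cor:hopf} (in the form of Remark \ref{rem:hopf1}) to propagate the asymptotic equality $\lambda_n(z_n)/\mu(z_n)\to 1$ from the sequence $(z_n)$ to a full circle, and then to analyze the local behaviour near a prospective zero. First I would fix a radius $R\in(0,1)$ with $z_0,\xi\in K_R(0)$ and, after applying a disk automorphism moving $z_0$ to the origin (which only changes $\kappa$ by precomposition with a M\"obius map and preserves the pinching bounds on a slightly smaller disk), reduce to the case $z_n\to 0$. By Corollary \ref{cor:hopf} applied on $r\le|z|\le R$, for $z$ near $0$ we have $\log(\lambda_n(z)/\mu(z))\ge C\log(\min_{|\xi|=r}\lambda_n(\xi)/\mu(\xi))$; running this in the other direction (taking $z=z_n$ on the inner side and a generic point on an intermediate circle $|z|=\rho$ on the outer side) shows $\lambda_n/\mu\to1$ \emph{uniformly} on each circle $|z|=\rho$ with $0<\rho<R$, hence locally uniformly on the punctured disk $K_R(0)\setminus\{0\}$ — at least away from the (isolated, but possibly moving) zeros of $\lambda_n$. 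The key point is that the Harnack constant $C=C(r,R,\kappa)$ is uniform in $n$ because it depends only on $\kappa=\kappa_\mu$, not on $\lambda_n$.

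The second ingredient is a local comparison near $\xi$. Suppose $\mu(z)\,|dz|$ has a zero of order $\beta\ge0$ at $\xi$; by Lemma \ref{lem:zerosisol} (and \cite[Theorem 1.1]{KR2008}), $\log(\mu(z)/|z-\xi|^\beta)$ extends $C^2$ across $\xi$. Near $\xi$ choose a small disk $K=K_\delta(\xi)\subset K_R(0)$ on whose boundary $\lambda_n/\mu\to1$ uniformly. On $K$, $\lambda_n\preceq\mu$ forces $\lambda_n/\mu\le1$, so the subharmonic function $s_n(z):=\log(\lambda_n(z)/\mu(z))\le0$ (extended across the isolated zeros of $\lambda_n$, where it tends to $-\infty$, and across $\xi$ where it is bounded above by $\lambda_n\preceq\mu$) satisfies a differential inequality of the type $\Delta s_n\ge -2\kappa(z)\mu(z)^2 s_n\ge0$ on $\{s_n<0\}$, exactly as in the proof of Theorem \ref{lem:hopf}(v). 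Thus $s_n$ is subharmonic on $K$ with boundary values tending uniformly to $0$, and the maximum principle gives $s_n\to0$ uniformly on compact subsets of $K$ — provided $\lambda_n$ has \emph{no} zero in that compact subset. To extract the statements about orders: if $\xi_n\to\xi$ is a zero of $\lambda_n$ of order $\alpha_n$, then for $z$ on a small circle $|z-\xi|=t$ (with $t$ fixed, $\xi_n$ eventually inside) the mean value of $s_n$ over this circle is $\ge \alpha_n\log t + O(1)$ only if $\alpha_n$ stays bounded; more precisely, comparing $\log\lambda_n$ with the explicit model metric of constant curvature $-4$ with a zero of order $\alpha_n$ at $\xi_n$ (whose behaviour is $\sim|z-\xi_n|^{\alpha_n}$), and using that $s_n\to0$ on the outer circle, a maximum-principle sandwich shows $|z-\xi_n|^{\alpha_n}$ must be comparable to $|z-\xi|^\beta$ uniformly near $\xi$, which is only possible if $\alpha_n\to\beta$ (case (a), when $\xi_n=\xi$) and $\alpha_n\to0$ (case (b), when $\xi_n\ne\xi$, $\xi_n\to\xi$, so $|z-\xi_n|^{\alpha_n}\to1$ near a fixed $z$, forcing $\beta=0$ as well and $\alpha_n\to0$).

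The main obstacle I expect is the bookkeeping around the moving zeros $\xi_n$: the function $s_n$ is not defined (or tends to $-\infty$) at $\xi_n$, so one cannot naively apply the maximum principle on a disk containing $\xi_n$. The clean way around this is the device already used in step (v) of the proof of Theorem \ref{lem:hopf}: replace $\lambda_n$ near its zero $\xi_n$ by the quantity $\lambda_n(z)/|z-\xi_n|^{\alpha_n}$, which is a genuine conformal metric (no zero) satisfying a curvature equation with the weight $|z-\xi_n|^{2\alpha_n}$ in it, compare it via standard elliptic theory (local H\"older continuity of $\kappa$) with the solution of the model equation having the prescribed boundary values, and then let the auxiliary metric play the role of $\lambda_n$ in the maximum-principle argument. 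A secondary technical point is verifying that the reduction $z_0\mapsto 0$ does not spoil the local H\"older regularity of $\kappa$ and keeps $-c\le\kappa\le-4$ on a fixed subdisk; this is routine since M\"obius maps are smooth diffeomorphisms up to the boundary of any compactly contained disk. Finally, to conclude $\beta_n\to\beta$ rather than merely $\limsup\beta_n\le\beta$ or the like, one uses both inequalities in the sandwich $\lambda_n\preceq\mu$ (giving $s_n\le0$, hence an upper bound on $\alpha_n$ via the order at which $\lambda_n$ can vanish while staying $\le\mu$) together with the lower Harnack bound from Corollary \ref{cor:hopf} that prevents $\lambda_n/\mu$ from decaying faster than a fixed power, which pins the order from below.
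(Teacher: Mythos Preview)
There is a genuine gap in your propagation step. Corollary~\ref{cor:hopf} is a one-sided estimate: it bounds $\log(\lambda/\mu)\le 0$ on an annulus $r\le|z|\le R$ from above by $C\cdot\max_{|\xi|=r}\log(\lambda/\mu)$. It does not provide the lower bound ``$\log(\lambda_n(z)/\mu(z))\ge C\log(\min_{|\xi|=r}\lambda_n(\xi)/\mu(\xi))$'' that you invoke, and running it with inner radius $r=|z_n|\to 0$ yields nothing beyond the trivial bound, since the Harnack constant $C(|z_n|,R,\kappa)=\exp(1-\varrho^2/|z_n|^2)\cdot(\ldots)$ tends to~$0$. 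So you cannot conclude $\lambda_n/\mu\to 1$ uniformly on any circle from the single-point hypothesis, and your local analysis near $\xi$ never gets started because you cannot assert $s_n\to 0$ on~$\partial K$. There is also a sign error in that local analysis: with $\kappa<0$ and $s_n\le 0$ one has $-2\kappa\mu^2 s_n\le 0$, so $s_n=\log(\lambda_n/\mu)$ is \emph{not} subharmonic, and the maximum-principle step would fail even if the boundary data were available.

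The paper's proof runs in the opposite direction and argues by contradiction, using as its key extra tool a Poisson--Jensen formula for pseudometrics with isolated zeros (Lemmas~\ref{lem:poissonjensen} and~\ref{lem:poissonjensencor}): a zero of $\lambda_n$ of excess order $\alpha-\beta>0$ at a point $\zeta$ forces
\[
\log\frac{\lambda_n(z)}{\mu(z)}\;\le\; -(\alpha-\beta)\,g_r(z,\zeta)+\frac{r^2 c_r}{4(1-r^2)^2}\qquad (|z|<r)\, .
\]
If the excess order stays bounded away from $0$ along a subsequence, this gives a fixed negative upper bound for $\log(\lambda_n/\mu)$ on a small circle near~$\zeta$; \emph{then} Corollary~\ref{cor:hopf} is applied (after an automorphism placing that small circle on the inside and $z_n$ in the outer annulus, with Harnack constant uniformly bounded below via Remark~\ref{rem:hopf1}) to propagate the negative bound outward to~$z_n$, contradicting $\lambda_n(z_n)/\mu(z_n)\to 1$. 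Your idea of comparing with the explicit maximal metric $(1+\beta_n)|z|^{\beta_n}/(1-|z|^{2(1+\beta_n)})$ is correct and is exactly how the paper handles the easy case $\xi=z_0$; for $\xi\ne z_0$ the Poisson--Jensen estimate is what replaces the propagation you were unable to carry out.
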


The proof of Theorem \ref{prop:zeros}  is based on the  Harnack inequality of Theorem \ref{lem:hopf} and 
the following auxiliary Lemma \ref{lem:poissonjensencor}, which is a consequence of the Poisson--Jensen formula. Recall that for  any $R \in (0,1)$ Green's function $g_R(z,w)$ for the disk $K_R(0)=\{z \in \C \, : \, |z|<R\}$ is given by
$$ g_{R}(z,w):=-\log \left| \frac{R (z-w)}{R^2-\overline{w} z} \right|\, .$$

We start with the following variant of the Poisson--Jensen formula.

\begin{lemma}[Poisson--Jensen formula for conformal pseudometrics with isolated~zeros] \label{lem:poissonjensen}
  Let $\lambda(z) \, |dz|$ be a conformal pseudometric in $\D$ with $\kappa_{\lambda} \le -4$ and  only isolated zeros $\xi_1,\xi_2,\ldots\in \D$ with orders
$\alpha_1>0, \alpha_2>0, \ldots$.
  Then, for any $R \in (0,1)$, the subharmonic function 
  $\log \lambda$ has a least harmonic majorant $h_R$ on $K_R(0)$  such that
  $$ h_R(z) \le \log \frac{1}{1-R^2} \, , \qquad |z|<R \, , $$ and
  $$ \log \lambda(z)=-\sum \limits_{|\xi_j| <R} \alpha_j \, g_{R}(z,\xi_j)+h_R(z)+\frac{1}{2\pi} \, \iint \limits_{K_R(0)} g_{R}(z,w) \kappa_{\lambda}(w) \lambda(w)^2 \, dA_w \, , \qquad |z|<R \, .$$
\end{lemma}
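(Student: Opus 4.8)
The plan is to reduce the statement to the classical Riesz decomposition / Poisson--Jensen formula for subharmonic functions applied to $u:=\log\lambda$ on the disk $K_R(0)$, and then to identify the three terms. First I would recall that $u=\log\lambda$ is subharmonic on $K_R(0)$ (it is $-\infty$ exactly at the isolated zeros $\xi_j$, and away from those it is $C^2$ with $\Delta u=-\kappa_\lambda\lambda^2\ge 0$, so the Riesz measure of $u$ is well defined). By the Riesz representation theorem on $\overline{K_{R'}(0)}$ for $R'<R$, one has
\[
u(z)=-\iint_{K_{R'}(0)} g_{R'}(z,w)\,d\nu_u(w)+h_{R'}(z),
\]
where $\nu_u$ is the Riesz measure of $u$ and $h_{R'}$ is the least harmonic majorant of $u$ on $K_{R'}(0)$ (equivalently the Poisson integral of the boundary values of $u$ on $|w|=R'$). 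The key computational point is that the Riesz measure splits as
\[
d\nu_u=\frac{1}{2\pi}\,\Delta u\,dA \;=\; \sum_{j}\alpha_j\,\delta_{\xi_j}\;-\;\frac{1}{2\pi}\,\kappa_\lambda(w)\lambda(w)^2\,dA_w,
\]
because near each isolated zero $\xi_j$ of order $\alpha_j$ the function $u(z)-\alpha_j\log|z-\xi_j|$ extends $C^2$ across $\xi_j$ (this is precisely the content of the description of isolated zeros; compare Lemma \ref{lem:zerosisol} and \cite[Theorem 1.1]{KR2008}), so the distributional Laplacian of $\alpha_j\log|z-\xi_j|$ contributes the point mass $2\pi\alpha_j\delta_{\xi_j}$, while on the complement $\Delta u=-\kappa_\lambda\lambda^2$. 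Substituting this into the Riesz formula and letting $R'\uparrow R$ (the integrals converge since $\kappa_\lambda\lambda^2$ is integrable on $K_R(0)$ — indeed $0\le-\kappa_\lambda\lambda^2$ and $\lambda\le\lambda_\D$ is bounded on compacta, and $\lambda^2$ is integrable since the total Riesz mass is finite by the a priori bound below) yields the stated identity with $h_R$ the least harmonic majorant of $\log\lambda$ on $K_R(0)$.

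It remains to prove the upper bound $h_R(z)\le\log\frac{1}{1-R^2}$ for $|z|<R$. Here I would invoke the Ahlfors--Schwarz lemma: since $\kappa_\lambda\le-4$ we have $\lambda(z)\le\lambda_\D(z)=\frac{1}{1-|z|^2}$ on $\D$, hence $u(z)=\log\lambda(z)\le\log\frac{1}{1-|z|^2}\le\log\frac{1}{1-R^2}$ on $\overline{K_R(0)}$. The constant function $\log\frac{1}{1-R^2}$ is harmonic and majorizes $u$ on $K_R(0)$, so by minimality the least harmonic majorant satisfies $h_R\le\log\frac{1}{1-R^2}$ there, and in particular $h_R$ exists and is finite (so $\log\lambda$ has a harmonic majorant on $K_R(0)$, which is what one needs for the Riesz--Poisson--Jensen decomposition to be valid with $h_R$ harmonic rather than merely superharmonic).

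The main obstacle is the bookkeeping at the zeros: one must be sure that the subharmonic function $\log\lambda$ has the "right" Riesz mass at each $\xi_j$, namely exactly the point mass $\alpha_j$, with \emph{no} absolutely continuous part piling up at $\xi_j$ and no other singular contribution. This is where the local description of isolated zeros of negatively pinched pseudometrics enters decisively: it tells us $\log\lambda(z)=\alpha_j\log|z-\xi_j|+($a $C^2$ function$)$ near $\xi_j$, so the decomposition of $d\nu_u$ displayed above is legitimate. Everything else — convergence of the Green potential $\iint g_R(z,w)\kappa_\lambda\lambda^2\,dA_w$, the passage $R'\uparrow R$, and the identification of $h_R$ with the Poisson integral of the boundary trace of $\log\lambda$ — is standard potential theory once this local structure is in hand.
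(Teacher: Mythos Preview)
Your proposal is correct and takes essentially the same approach as the paper: both use the Ahlfors--Schwarz lemma for the bound on $h_R$ and \cite[Theorem~1.1]{KR2008} for the local structure $\log\lambda(z)=\alpha_j\log|z-\xi_j|+(\text{a }C^2\text{ function})$ near each zero. The paper's only organizational difference is that it defines $v(z):=\log\lambda(z)+\sum_{|\xi_j|<R}\alpha_j\, g_R(z,\xi_j)$, notes that $v$ is $C^2$ on $K_R(0)$ with $\Delta v=-\kappa_\lambda\lambda^2$ and with the same least harmonic majorant $h_R$, and then applies the standard Poisson--Jensen formula (\cite[Proposition~4.1]{KR2008}) directly to $v$, thereby avoiding your Riesz--measure splitting and the limit $R'\uparrow R$.
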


Recall our convention that $\kappa_{\lambda}$ is bounded (below) in some some disk $K_r(\xi)\setminus \{\xi\}$ for any of its isolated zeros $\xi$. In particular, the  area integral converges.

\begin{proof} Since $u:=\log \lambda$ is $C^2$ on $\D \setminus \{\xi_1,\xi_2,\ldots \}$ and $\Delta u =-\kappa_{\lambda}(z) e^{2 u} \ge 4 e^{2 u} \ge 0$ there, $u$ is subharmonic on the entire disk $\D$. By the Ahlfors--Schwarz lemma, $$\lambda(z) \le \lambda_{\D}(z)=\frac{1}{1-|z|^2} \, , \qquad z \in \D \, .$$  Thus on the disk $K_R(0)$ the function  $u(z)$ has  the (constant) harmonic majorant $-\log (1-R^2)$, and hence a least harmonic majorant $h_R : K_R(0) \to \R$, say. We now consider
    $$ v(z):=u(z)+\sum \limits_{|\xi_j| <R} \alpha_j \, g_{R}(z,\xi_j)\,.$$
    By Theorem 1.1 in \cite{KR2008}, which requires that $\kappa_{\lambda}$ is  bounded  on $K_R(0) \setminus\{\xi_1, \xi_2, \ldots\}$, 
    the subharmonic function 
 $v$ is of class $C^2$ on $K_R(0)$ and $\Delta v=-\kappa_{\lambda} \lambda^2 $ on  $K_R(0) \setminus \{\xi_1,\xi_2, \ldots\}$. It is easy to see that  $h_R$ is the least harmonic majorant for $v$ on $K_R(0)$, so the standard Poisson--Jensen formula for $v$ (see, for instance, \cite[Proposition 4.1]{KR2008}) proves the lemma.
\end{proof}

  \begin{lemma} \label{lem:poissonjensencor}
      Let $\lambda(z) \, |dz|$ and $\mu(z) \, |dz|$ be conformal pseudometrics on $\D$ with only isolated zeros  and $\kappa_{\mu}=\kappa$ for some continuous function $\kappa : \D \to \R$  with $\kappa(z) \le -4$ in $\D$. Suppose $\lambda$ is dominated by $\mu$.
      Then for any $r \in (0,1)$ and  $\xi \in K_r(0)$, 
      $$ \log  \frac{\lambda(z)}{\mu(z)} \le -\left(\alpha-\beta\right) \cdot g_r(z,\xi)  +\frac{r^2 \, c_r}{4 (1-r^2)^2}  \, , \qquad |z|<r \, , $$
      where $\alpha \ge 0$ resp.~$\beta \ge 0$ denote  the order of $\xi$ as a zero of $\lambda(z) \, |dz|$ resp.~$\mu(z) \, |dz|$, and 
      $$ c_r:=-\min \limits_{|z| \le r} \kappa(z) \, .$$
      \end{lemma}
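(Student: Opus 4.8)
The plan is to apply the Poisson--Jensen formula of Lemma \ref{lem:poissonjensen} to both $\lambda$ and $\mu$ on the disk $K_r(0)$ and subtract, tracking the contributions of the Green's function terms, the harmonic majorants, and the curvature area integrals separately. First I would invoke Lemma \ref{lem:poissonjensen} for $\lambda$ (whose curvature satisfies $\kappa_\lambda\le -4$ by hypothesis (i) of domination) to write
$$\log\lambda(z)=-\sum_{|\xi_j|<r}\alpha_j\, g_r(z,\xi_j)+h^\lambda_r(z)+\frac1{2\pi}\iint_{K_r(0)}g_r(z,w)\,\kappa_\lambda(w)\lambda(w)^2\,dA_w,$$
and similarly for $\mu$ with curvature exactly $\kappa$. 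Since Green's function is nonnegative on $K_r(0)$ and $\kappa_\lambda\le 0$, the area integral in the $\lambda$-formula is $\le 0$; since $\kappa=\kappa_\mu\ge-c_r$ on $\overline{K_r(0)}$ and $\mu\le\lambda_\D$ by Ahlfors' lemma, the area integral in the $\mu$-formula is bounded below by $-\frac{c_r}{2\pi}\iint_{K_r(0)}g_r(z,w)\lambda_\D(w)^2\,dA_w\ge -\frac{c_r}{2\pi(1-r^2)^2}\iint_{K_r(0)}g_r(z,w)\,dA_w$. The remaining step is to observe that $\frac1{2\pi}\iint_{K_r(0)}g_r(z,w)\,dA_w$ is bounded above by its value at the center, namely $\frac1{2\pi}\iint_{K_r(0)}g_r(0,w)\,dA_w=\frac{r^2}{4}$ (a direct computation using $g_r(0,w)=-\log(|w|/r)$ and integrating in polar coordinates), which accounts for the term $\frac{r^2 c_r}{4(1-r^2)^2}$ in the claimed estimate.

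Next I would deal with the harmonic majorant terms. From Lemma \ref{lem:poissonjensen} we have $h^\lambda_r(z)\le\log\frac1{1-r^2}$. To control $h^\mu_r$ from below I would use that $h^\mu_r$ is the least harmonic majorant of $\log\mu$ on $K_r(0)$, hence $h^\mu_r\ge\log\mu$ pointwise; more to the point, writing $\log\mu$ via its own Poisson--Jensen representation and using $\kappa\le-4$ together with domination, one sees that the difference $h^\lambda_r-h^\mu_r$ of the two harmonic functions has nonpositive boundary values, since $\log(\lambda/\mu)$ extends continuously to values in $(-\infty,0]$ (this is exactly condition (ii) of $\lambda\preceq\mu$, giving $\lambda/\mu\le1$), and on $\partial K_r(0)$ the boundary values of $\log\lambda-\log\mu$ equal those of $h^\lambda_r-h^\mu_r$ minus the (nonnegative, but boundary-vanishing) Green's terms. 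By the maximum principle $h^\lambda_r-h^\mu_r\le 0$ throughout $K_r(0)$, so this difference contributes nothing positive to the estimate. Finally, subtracting the Green's function sums leaves $-\sum_{|\xi_j|<r}\alpha_j g_r(z,\xi_j)+\sum_{|\eta_j|<r}\beta_j g_r(z,\eta_j)$; since all these terms are $\le 0$ except possibly cancellations, and the only point where we need to retain information is $\xi$ itself, I would bound the whole expression by keeping just the term at $\xi$, namely $-(\alpha-\beta)g_r(z,\xi)$ (the contributions at the other zeros of $\lambda$ are $\le0$ and can be discarded; the contributions at the other zeros of $\mu$ are absorbed because $\log(\lambda/\mu)\le0$ already).

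Assembling these three bounds yields exactly
$$\log\frac{\lambda(z)}{\mu(z)}\le -(\alpha-\beta)\,g_r(z,\xi)+\frac{r^2\,c_r}{4(1-r^2)^2},\qquad |z|<r.$$
I expect the main obstacle to be the bookkeeping around the harmonic majorants: one must justify carefully that $h^\lambda_r\le h^\mu_r$, which hinges on correctly matching boundary behavior of the two Poisson--Jensen decompositions and on the fact that $\lambda\preceq\mu$ forces $\lambda/\mu$ to extend continuously with values in $[0,1]$ up to (and through) the isolated zeros; the continuity at the zeros is what lets us pass from the inequality on $\{\mu\neq0\}$ to one valid on all of $K_r(0)$. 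The curvature-integral estimate and the elementary identity $\frac1{2\pi}\iint_{K_r(0)}g_r(0,w)\,dA_w=r^2/4$ are routine by comparison; the one subtle point there is replacing $g_r(z,w)$ by $g_r(0,w)$, which requires that for fixed $w$ the map $z\mapsto g_r(z,w)$ is bounded above by $g_r(0,w)$ — this follows from the fact that $g_r(\cdot,w)$ is the Green's function, superharmonic with a logarithmic pole at $w$, but for a clean bound one can instead integrate the known upper bound coming from the explicit formula and Jensen's inequality applied to the concave function $t\mapsto-\log t$.
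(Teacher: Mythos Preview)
Your overall strategy---apply the Poisson--Jensen formula (Lemma~\ref{lem:poissonjensen}) to both $\lambda$ and $\mu$ on $K_r(0)$, subtract, and bound the three pieces (Green's sums, harmonic majorants, curvature integrals) separately---is exactly the paper's approach. Two of your justifications, however, are not right as stated.

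\medskip

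\textbf{(1) The harmonic majorants.} Your argument that $h^\lambda_r-h^\mu_r$ has nonpositive \emph{boundary values} is neither justified nor needed: the least harmonic majorant of a subharmonic function bounded above need not have well-defined boundary values on $\partial K_r(0)$, and your attempt to read them off from the Poisson--Jensen decomposition is circular. The correct argument is much simpler. From $\lambda\preceq\mu$ you have $\log\lambda\le\log\mu$ on $K_r(0)$, so every harmonic majorant of $\log\mu$ is automatically a harmonic majorant of $\log\lambda$; taking the least one gives $h^\lambda_r\le h^\mu_r$ immediately. This is what the paper does in one line.

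\medskip

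\textbf{(2) The Green's function sums.} Your sentence ``the contributions at the other zeros of $\mu$ are absorbed because $\log(\lambda/\mu)\le 0$ already'' is circular: those contributions are of the form $+\beta'\,g_r(z,\eta)\ge 0$, and you are in the middle of trying to bound $\log(\lambda/\mu)$, so you cannot invoke that bound to discard positive terms. The fix is to observe that $\lambda\preceq\mu$ (specifically, the continuous extension of $\lambda/\mu$ with values in $[0,1]$) forces every zero of $\mu$ to be a zero of $\lambda$ of at least the same order. Hence the zero sets can be indexed by the zeros $\xi_j$ of $\lambda$, with orders $\alpha_j$ for $\lambda$ and $\beta_j\le\alpha_j$ for $\mu$; the difference of the Green's sums is then $-\sum_j(\alpha_j-\beta_j)\,g_r(z,\xi_j)\le -(\alpha-\beta)\,g_r(z,\xi)$, keeping only the term at $\xi$. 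This is the paper's bookkeeping.

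\medskip

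A minor point: for the curvature integral you do not need any argument about $z\mapsto g_r(z,w)$ being maximized at the center. One has the exact identity
$\frac{1}{2\pi}\iint_{K_r(0)} g_r(z,w)\,dA_w=\tfrac{r^2-|z|^2}{4}\le \tfrac{r^2}{4}$,
which gives the constant directly.
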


      \begin{proof} Let $\xi_1, \xi_2, \ldots$ denote the pairwise distinct zeros of $\lambda(z) \, |dz|$ with positive order $\alpha_1,\alpha_2,\ldots$. Lemma \ref{lem:poissonjensen} shows that for any $r \in (0,1)$, 

        $$ \log \lambda(z)=-\sum \limits_{|\xi_j| <r} \alpha_j \, g_{r}(z,\xi_j)+h_{\lambda,r}(z)+\frac{1}{2\pi} \, \iint \limits_{K_r(0)} g_{r}(z,w) \kappa_{\lambda}(w) \lambda(w)^2 \, dA_w \, , \qquad |z|<r \, ,$$
        where $h_{\lambda,r}$ denotes the least harmonic majorant of $\log \lambda$ on $K_r(0)$.
In a similar way, we have
$$ \log \mu(z)=-\sum \limits_{|\xi_j| <r} \beta_j \, g_{r}(z,\xi_j)+h_{\mu,r}(z)+\frac{1}{2\pi} \, \iint \limits_{K_r(0)} g_{r}(z,w) \kappa_{\mu}(w) \mu(w)^2 \, dA_w \, , \qquad |z|<r \, ,$$
where $\beta_j \ge 0$ is the order of $\xi_j$ as a zero of $\lambda(z) \, |dz|$ and
 $h_{\mu,r}$ denotes the least harmonic majorant of $\log \mu$ on $K_r(0)$.
From $\lambda \preceq \mu$ we get  $h_{\lambda,r} \le h_{\mu,r}$ and $\beta_j \le \alpha_j$ for $j=1,2,\ldots$. Therefore, if $|\xi_j|<r$, then
\begin{eqnarray*}
 \log \frac{\lambda(z)}{\mu(z)} & \le &  -\left( \alpha_j-\beta_j\right)
g_{r}(z,\xi_j)+\frac{1}{2 \pi} \iint \limits_{K_{r}(0)}
g_{r}(z,w) \left[ \kappa_{\lambda}(w) \lambda_n(w)^2-\kappa_{\mu}(w)
                                              \mu(w)^2 \right] \, dA_w\\
& \le &   -(\alpha_j-\beta_j) g_r(z,\xi_j)-\frac{1}{2\pi} \iint \limits_{K_r(0)}
        g_r(z,w) \kappa_{\mu}(w) \mu(w)^2 \, dA_w  \\
& \le & - (\alpha_j-\beta_j) g_r(z,\xi_j) +\frac{1}{2\pi} \iint
        \limits_{K_r(0)} g_r(z,w) \, dA_w \cdot \frac{c_r}{(1-r^2)^2} 
  \, , 
                                              \end{eqnarray*}
                                              where $c_r:=-\min \limits_{|z| \le r} \kappa(z)$. In the last step we have used Ahlfors' lemma
                                              which gives
                                              $$\mu(w) \le \lambda_{\D}(w)
                                              =\frac{1}{1-|w|^2} \le
                                              \frac{1}{1-r^2} \quad \text{ for
                                                all } |w| \le r \, . $$
                                              Since 
                                              $$ \frac{1}{2\pi} \iint
        \limits_{K_r(0)} g_r(z,w) \, dA_w =
      \frac{r^2-|z|^2}{4}  \, , $$
      the proof is complete.
    \end{proof}

  \begin{proof}[Proof of Theorem \ref{prop:zeros}]
    Replacing $\lambda_n$ by $T^* \lambda_n$ and $\mu$ by $T^*\mu$ where   $$T(z)=\frac{z_0-z}{1-\overline{z_0} z} \, ,$$
    we may assume from now on that $z_n \to z_0=0$.

\medskip

\textit{Case $\xi=0$:}
Since $\lambda_n \preceq \mu$, we have $\beta_n \ge \beta$. Since $\xi=0$ we have
$$ \lambda_n(z) \le (1+\beta_n) \frac{|z|^{\beta_n}}{1-|z|^{2 (1+\beta_n)}} \, ,\qquad |z|<1 \, .$$
This  follows from the easily verified fact that
$$ (1+\beta_n) \frac{|z|^{\beta_n}}{1-|z|^{2 (1+\beta_n)}} \, |dz|$$
is the \textit{maximal} conformal pseudometric on $\D$ with curvature $=-4$ and a zero of order $\beta_n$ at $z=0$. 
Hence we get
$$ \frac{\lambda_n(z_n)}{\mu(z_n)} \le |z_n|^{\beta_n-\beta} \frac{|z_n|^{\beta}}{\mu(z_n)} \,\frac{1+\beta_n}{1-|z_n|^{2(1+\beta_n)}} \, .$$
Therefore,  in view of $\beta_n \ge \beta$ and Lemma \ref{lem:zerosisol},  the condition $\lambda_n(z_n)/\mu(z_n) \to 1$ implies  $\beta_n \to \beta$, and (a) is proved. In order to prove
(b)  we fix $r \in (0,1)$ and use Lemma \ref{lem:poissonjensencor}. Note that the assumption $\lambda_n \preceq \mu$ implies that
either $\mu(\xi)\not=0$ or $\xi$ is an isolated zero of $\mu(z) \, |dz|$, so $\mu(\xi_n)\not=0$ for all but finitely many $n$. We therefore get from Lemma \ref{lem:poissonjensencor}, 

$$  \log \frac{\lambda_n(z_n)}{\mu(z_n)} \le -\alpha_n g_r(z_n,\xi_n) + \frac{r^2c_r}{4 (1-r^2)^2} =\alpha_n \log \left| \frac{r (z_n-\xi_n)}{r^2-\overline{z_n} \xi_n} \right| + \frac{r^2 c_r}{4 (1-r^2)^2}  \, .$$
Since $\lambda_n(z_n)/\mu(z_n) \to 1$, $\xi_n \to 0$ and $z_n \to 0$, we deduce  $\alpha_n
\to 0$.

\medskip

\textit{Case  $\xi\not=0$:} This case is more involved and requires the Harnack estimate of Corollary \ref{cor:hopf} with good control of the Harnack constant.
We start by proving (a), and consider the unit disk automorphism
$$ T_n(z):=\displaystyle \frac{z_n-\displaystyle\frac{\xi-z}{1-\overline{\xi} z}}{1-\overline{z_n} \displaystyle\frac{\xi-z}{1-\overline{\xi} z}} \, ,$$
for which $T_n(\xi)=z_n \to 0$ and $T_n(z_n) \to -\xi$. Then
$ \tilde{\lambda}_n:=(T_n^{-1})^* \lambda_n \preceq  \mu_n:=(T_n^{-1})^* \mu$.
  Finally we  fix two  positive constants $R<1$ and $\varrho<1$ such  that $0<|\xi|<R<\varrho<1$ 
    and observe 
    $$ \gamma:=\sup \limits_{n \in \N} \left(-\min \limits_{|z| \le \varrho} \kappa_{\mu_n}(z) \right) = \sup \limits_{n \in \N} \left(-\min \limits_{|z| \le \varrho} \kappa_{\mu}(T_n^{-1}(z)) \right) <\infty \, .$$
In order to prove $\beta_n \to \beta$ we argue by contradiction and, recalling $\beta_n \ge \beta$,  we  assume
    $$ \beta':=\limsup \limits_{n \to \infty} \beta_n > \beta \, .$$
First we  choose $\tilde{r}>0$ so that $\tilde{r}<|\xi|$  and
\begin{equation} \label{eq:prop1a}
   \left(\frac{3}{4} \right)^{\beta'-\beta} \exp \left( \frac{\tilde{r}^2 \gamma}{4 (1-\tilde{r}^2)^2}  \right)<1 \, .
\end{equation}
  Next we choose  a positive integer $N$ such that
$$ 
  |T_n(\xi)| < \frac{2 \tilde{r}}{11} \quad \text{ and } \quad \frac{\tilde{r}}{2} \le |T_n(z_n)| <R   \quad \text{ for all } n \ge N \, .
  $$
  This is possible since $T_n(\xi) \to 0$ and $T_n(z_n) \to -\xi$, where $0<|\xi|<R$.
    It is easy to see that this choice of $N$ ensures in particular
    $$ \exp\left(- g_{\tilde{r}}(z,T_n(\xi)) \right) \le \frac{3}{4} $$
    for all $|z| \le \tilde{r}/2$ and all $n \ge N$.
Since $T_n(\xi) \in K_{\tilde{r}}(0)$ is a zero of $\tilde{\lambda}_n(z) \, |dz|$ of order $\beta_n$ and a zero of $\mu_n(z) \, |dz|$ of order $\beta$,  Lemma \ref{lem:poissonjensencor} applied to the disk $K_{\tilde{r}}(0)$ therefore implies
$$  \frac{\tilde{\lambda}_n(z)}{\mu_n(z)} \le \left( \frac{3}{4} \right)^{\beta_n-\beta}
\exp \left( \frac{\tilde{r}^2\gamma}{4 (1-\tilde{r}^2)^2}  \right)  \, , \quad |z| \le \tilde{r}/2, \, n \ge N \, .$$
In view of  $r:=\tilde{r}/2\le |T_n(z_n)|  <R$ for all $n \ge N$,     Corollary \ref{cor:hopf} now gives us
\begin{equation} \label{eq:prop3}
    \log \frac{\lambda_n(z_n)}{\mu(z_n)}=\log \frac{\tilde{\lambda}_n(T_n(z_n))}{\mu_n(T_n(z_n))}  \le C_n \log 
    \left[  \left( \frac{3}{4} \right)^{\beta_n-\beta}
      \exp \left( \frac{\tilde{r}^2 \gamma}{4(1-\tilde{r}^2)^2}  \right) \right]
    \end{equation}
with  $C_n=C(r,R,\kappa_{\mu_n})$   bounded from below by the positive number
$$  \exp \left(1-\frac{\varrho^2}{r^2}\right) \cdot \left( \frac{\varrho^2-R^2}{\varrho^2-r^2} \right)^{\gamma/2} \, , $$
see Remark \ref{rem:hopf1}.
In view of (\ref{eq:prop1a}), we thus see that  inequality (\ref{eq:prop3}) contradicts $\lambda_n(z_n)/\mu(z_n) \to 1$. This shows $\beta_n \to \beta$.

In order to prove (b), we proceed in a similar way, but now we consider the unit disk automorphisms
$$ S_n(z):=\displaystyle \frac{z_n-\displaystyle\frac{\xi_n-z}{1-\overline{\xi_n} z}}{1-\overline{z_n} \displaystyle\frac{\xi_n-z}{1-\overline{\xi_n} z}} \, ,$$
for which $S_n(\xi_n)=z_n \to 0$ and $S_n(z_n) \to -\xi$. As above we have
$ \tilde{\lambda}_n:=(S_n^{-1})^* \lambda_n \preceq  \mu_n:=(S_n^{-1})^* \mu$.
We 
%
     fix   constants $R>0$ and $\varrho>0$ such that $0<|\xi|<R<\varrho<1$. Then, as above,
    
    $$ \gamma':=\sup \limits_{n \in \N} \left(-\min \limits_{|z| \le \varrho} \kappa_{\mu_n}(z)\right) =\sup \limits_{n \in \N} \left( -\min \limits_{|z| \le \varrho} \kappa_{\mu}(S_n^{-1}(z))\right) <\infty \, .$$
\medskip
\hide{In order to prove $\beta_n \to \beta$ we argue by contradiction and assume
    $$ \beta':=\limsup \limits_{n \to \infty} \beta_n > \beta \, .$$
First we  choose $0<r<|\xi|$  so that 
\begin{equation} \label{eq:prop1a}
   \left(\frac{3}{4} \right)^{\beta'-\beta} \exp \left( \frac{\gamma}{(1-r^2)^2} \frac{r^2}{4} \right)<1 \, .
\end{equation}
  Next we choose  a positive integer $N$ such that
$$ 
  |T_n(\xi)| < \frac{2 r}{11} \quad \text{ and} \quad 
     \frac{r}{2} < \frac{|\xi|}{2} < |\xi_n|<R  \quad \text{ for all } n \ge N \, .
     $$  
    It is easy to see that this choice of $N$ ensures
    $$ \exp\left(- g_{r}(z,T_n(\xi)) \right) \le \frac{3}{4} $$
    for all $|z| \le r/2$ and all $n \ge N$.
Since $T_n(\xi) \in K_r(0)$ is a zero of $\tilde{\lambda}_n(z) \, |dz|$ of order $\beta_n$ and a zero of $\mu_n(z) \, |dz|$ of order $\beta$,  Lemma \ref{lem:poissonjensencor} therefore implies
$$  \frac{\tilde{\lambda}_n(z)}{\mu_n(z)} \le \left( \frac{3}{4} \right)^{\beta_n-\beta}
\exp \left( \frac{\gamma}{(1-r^2)^2} \frac{r^2}{4} \right)  \, , \quad |z| \le \frac{r}{2}, \, n \ge N \, .$$
In view of  $r/2<|\xi_n|<R$,     Corollary \ref{cor:hopf} now gives us
\begin{equation} \label{eq:prop3}
    \log \frac{\lambda_n(z_n)}{\mu(z_n)}=\log \frac{\tilde{\lambda}_n(\xi_n)}{\mu_n(\xi_n)}  \le C_n \log 
    \left[  \left( \frac{3}{4} \right)^{\beta_n-\beta}
      \exp \left( \frac{\gamma}{(1-r^2)^2} \frac{r^2}{4} \right) \right]
    \end{equation}
with  $C_n=C(r/2,R,\kappa_{\mu_n})$  bounded from below by the positive number
$$  \exp \left(1-\frac{R'^2}{r'^2}\right) \cdot \left( \frac{R'^2-R^2}{R'^2-r'^2} \right)^{\gamma/2} \, $$
with $r':=r/2$ and $R':=(1+|\xi|)/2$, 
see Remark \ref{rem:hopf1}.
In view of (\ref{eq:prop1a}), we thus see that  inequality (\ref{eq:prop3}) contradicts $\lambda_n(z_n)/\mu(z_n) \to 1$. This shows $\beta_n \to \beta$.

\medskip}

In order to prove $\alpha_n \to 0$ we now  argue again by contradiction. Assuming
$$ \alpha':=\limsup \limits_{n \to \infty} \alpha_n>0 \, ,$$
we can choose $0<\tilde{r}<|\xi|$ such that
\begin{equation} \label{eq:prop1b}
\left( \frac{3}{4} \right)^{\alpha'}  \exp \left( \frac{\tilde{r}^2\gamma}{4 (1-\tilde{r}^2)^2}  \right)<1 \, .
\end{equation}
Having fixed $\tilde{r}>0$ in this way, we can  find a positive integer $N$ with 
$$|S_n(\xi_n)|< \frac{2 \tilde{r}}{11} \quad \text{ and } \quad \frac{\tilde{r}}{2} \le |S_n(z_n)| < R \quad \text{ for all } n \ge N \, , $$
as well as
\begin{equation} \label{eq:zerosexclude}
 \mu(\xi_n) \not=0 \quad \text{ for all } n \ge N \, .
\end{equation}
This is possible since $S_n(\xi_n)=z_n \to 0$ and $S_n(z_n) \to -\xi$ with $\tilde{r}<|\xi|<R$, and since   either $\mu(\xi)\not=0$ or $\xi$ is an isolated zero of $\mu(z) \, |dz|$ by our assumption $\lambda_n \preceq \mu$.
This choice of $N$ implies
$$ \exp \left(-g_{\tilde{r}}(z,z_n) \right) \le \frac{3}{4} \quad \text{ for all } |z| \le \frac{\tilde{r}}{2} \text{ and all } n \ge N \, .$$
Since $z_n=S_n(\xi_n) \in K_{\tilde{r}}(0)$ is a zero of order $\alpha_n \ge 0$ of $\tilde{\lambda}_n(z) \, |dz|$, but not a zero of $\mu_n(z) \, |dz|$ by (\ref{eq:zerosexclude}), 
 Lemma \ref{lem:poissonjensencor} applied for $K_{\tilde{r}}(0)$ implies
$$ \frac{\tilde{\lambda}_n(z)}{\mu_n(z)} \le \left( \frac{3}{4} \right)^{\alpha_n}\exp \left( \frac{\tilde{r}^2 \gamma}{4(1-\tilde{r}^2)^2}  \right) \, , \qquad |z| \le \tilde{r}/2, \, n \ge N \, .$$
 Since $r:=\tilde{r}/2<|S_n(z_n)|<R$,     Corollary \ref{cor:hopf} shows 
    $$ \log \frac{\lambda_n(z_n)}{\mu(z_n)}=\log \frac{\tilde{\lambda}_n(S_n(z_n))}{\mu_n(S_n(z_n))}  \le C_n \log 
    \left[  \left( \frac{3}{4} \right)^{\alpha_n}
\exp \left( \frac{\gamma}{(1-\tilde{r}^2)^2} \frac{\tilde{r}^2}{4} \right) \right] \,  , \quad n \ge N \, ,  $$
with  $C_n$ bounded below by some positive constant independent of $n$, as above. In view of (\ref{eq:prop1b}) this inequality contradicts $\lambda_n(z_n)/\mu(z_n) \to 1$, and we have shown $\alpha_n \to 0$.
    \end{proof}

    \begin{proof}[Proof of  Theorem \ref{thm:ahl124} for the  interior case: $(z_n)$ is compactly contained in $\D$]
      We  denote $G:=\{z \in \D \, : \, \mu(z)>0\}$, so $\D \setminus G$ is
      the set of zeros of $\mu(z) \, |dz|$, which are isolated by assumption.
\hide{by $\eta_1,\eta_2, \ldots$ 
      the pairwise distinct zeros of $\mu(z) \, |dz|$  which we arrange such that $|\eta_1| \le |\eta_2| \le \ldots$. Let $G:=\D \setminus \{\eta_j \, : \, j=1,2,\ldots\}$.}  We distinguish two cases.

      \medskip
      1.~Case: There is a subsequence $(n_k)$ such that $\lambda_{n_k}(z) \, |dz|$ has a zero
      $\xi_{n_k} \in G$ of order $\alpha_{n_k}>0$ such that $\xi_{n_k} \to \xi_0 \in \D$.
      Passing to another subsequence, if necessary, we may assume $z_{n_k} \to z_0 \in \D$.
      If $\xi_0 \in G$, then $\xi_0$ is not a zero of $\mu(z) \, |dz|$, so
      $\alpha_{n_k} \to 0$ by Theorem \ref{prop:zeros} (a) and (b). If $\xi_0
      \not\in G$, then $\xi_{n_k} \not=\xi_0$ for all but finitely many $k$,  so $\alpha_{n_k}
      \to 0$ by Theorem \ref{prop:zeros} (b). Hence alternative (ii) holds.

      \medskip
      
      2.~Case: For each $R \in (0,1)$ there is a positive integer $N=N(R)$
      such that for any $n \ge N$ the pseudometric $\lambda_n(z) \, |dz|$ has
      no zeros in $G_R:=\overline{K_R(0)} \cap G$. We show that (i) holds.

      \smallskip

We  first prove that $\{\lambda_n/\mu\, : \, n \in \N\}$ is a normal family of continuous functions on each disk $K_R(0)$.
      Fix $R \in (0,1)$, let $n \ge N(R)$ and let $\xi_1,\ldots, \xi_L$ be the zeros of $\mu(z) \, |dz|$ in $K_R(0)$ of order $\beta_1, \ldots, \beta_L>0$, say.
      Then the points $\xi_1, \ldots, \xi_L$ are exactly the zeros of $\lambda_n(z) \, |dz|$ in $K_R(0)$ of order $\beta_{1,n} \ge \beta_1, \ldots \beta_{L,n} \ge \beta_L$. Let $h_{n,R}$ and $h_R$ denote the least harmonic majorant of $\log \lambda_{n,R}$ and $\log \mu$ in $K_R(0)$. Lemma \ref{lem:poissonjensen} shows
      \begin{eqnarray*}
        \log  \frac{\lambda_n(z)}{\mu(z)} &=& -\sum \limits_{j=1}^L \left( \beta_{j,n}-\beta_j \right) g_R(z,\xi_j)+h_{n,R}(z)-h_R(z)\\ & & +\frac{1}{2\pi} \iint \limits_{K_R(0)} g_R(z,w) \left[ \kappa_{\lambda_n}(w) \lambda_n(w)^2-\kappa_{\mu}(w) \mu(w)^2 \right] \, dA_w \,
                                                       \\                                                                                     &=:& -\sum \limits_{j=1}^L \left( \beta_{j,n}-\beta_j \right) g_R(z,\xi_j)+h_{n,R}(z)-h_R(z)+p_n(z) \, , \qquad |z|<R \, ,
        \end{eqnarray*}
where  $h_{n,R}$ belongs to the set  of all harmonic
  functions in $K_R(0)$ which are bounded by $\log(1/(1-R^2)$, so $\{h_{n,R}\, : \, n \ge N(R)\}$ is a normal
   family on $K_R(0)$ (see \cite[Theorem 5.4.2]{Schiff}). Again using the Ahlfors--Schwarz lemma, we see that the Green
  potential $p_n(z)$  satisfies
  $$ |p_n(z_1)-p_n(z_2)| \le \frac{1}{2\pi} \iint \limits_{K_R(0)}
  |g_R(z_1,w)-g_R(z_2,w)| \, dA \cdot \frac{c_R}{(1-R^2)^2} \, $$
  as well as
  $$ |p_n(z)| \le \frac{1}{2\pi} \iint \limits_{K_R(0)}
  g_R(z,w) \, dA \cdot \frac{c_R}{(1-R^2)^2}$$
  for all $z_1,z_2,z \in K_R(0)$. Here $c_R$ is a positive constant independent of $n$ such that $|\kappa_{\mu}(z)| \le c_R$ and $|\kappa_{\lambda_n}(z)| \le c_R$ for all $n=1,2 \ldots$ and all $|z| \le R$.
Using the explicit expression for the Green's function $g_R$, we deduce that
the family $\{p_n \, : \, n\ge N(R)\}$ of all such Green potentials is locally uniformly equicontinuous and locally uniformly bounded on $K_R(0)$. Finally, there is  a subsequence $(z_{n_k})$ which tends to some $z_0 \in \D$, so
$\beta_{j,n_k} \to \beta_j$ as $k \to \infty$ for each $j=1,\ldots, L$ by Theorem \ref{prop:zeros} (a). We conclude that $\{\lambda_n/\mu \, : \, n \ge N(R)\}$ is a normal family in $K_R(0)$ for each $R \in (0,1)$ and hence on $\D$.

\smallskip
We now prove that $\lambda_n/\mu \to 1$ locally uniformly in $\D$.
There is a subsequence of $(\lambda_{n_k}/\mu)$ which converges locally uniformly in $\D$  to some continuous function $g : \D  \to \R$ with $0 \le g(z) \le 1$ for all $z \in \D$.   If $z_0 \in \D$ denotes a subsequential limit of $(z_{n_k})$, then (\ref{eq:b}) implies
$g(z_0)=1$. Suppose that $g(z_1)<1$ for some $z_1 \in \D$. Consider
$$ \tilde{g}:=g \circ T \quad \text{ with } \quad T(z):= \frac{z_1+z}{1+\overline{z_1} z} \, .$$
Then $\tilde{g}(0)<1$ and since $\tilde{g}$ is continuous there is $r \in (0,1)$ such that $\tilde{g}(z)<1$ for all $|z| \le r$. In particular, $R:=|T^{-1}(z_0)|>r$. We now use Corollary \ref{cor:hopf}, which shows that
$$ \frac{T^*\lambda_n(z)}{T^*\mu(z)} \le \max \limits_{|\xi|=r} \left( \frac{T^*\lambda_n(\xi)}{T^*\mu(\xi)} \right)^C \quad \text{ on } \quad r \le |z| \le R \, .$$
The Harnack constant $C>0$ does not depend on $n$. Hence we get
$$ \tilde{g}(z) \le \max  \limits_{|\xi|=r} \tilde{g}(\xi)^C<1 \quad \text{ on } \quad r \le |z| \le R \, .$$
Using this inequality for the point $T^{-1}(z_0)$ we have  $1=g(z_0)=\tilde{g}(T^{-1}(z_0))<1$, a contradiction. We have shown that every subsequential locally uniform limit function of $(\lambda_n/\mu)$ is the constant function $1$. Since $(\lambda_n/\mu)$ is a normal family on $\D$, we see that $\lambda_n/\mu \to 1$ locally uniformly in $\D$.
\end{proof}

    \begin{proof}[Proof of  Theorem \ref{thm:ahl124} for the boundary case: $|z_n| \to 1$]
  By assumption,
  $$ \lim \limits_{n \to \infty} \frac{\log \frac{\lambda_n(z_n)}{\mu(z_n)}}{\left(1-|z_n| \right)^{c/2}}=0 \, .$$
  Hence 
  Theorem \ref{lem:hopf} implies that for any $r \in (0,1)$, 
  $$ \lim \limits_{n \to \infty} \max \limits_{|\xi|=r} \log \frac{\lambda_n(\zeta)}{\mu(\zeta)}=0 \, .$$
  Thus for fixed $r=1/2$, say, and
  any $n=1,2, \ldots$ there is a point $z_n'$ such that $|z_n'|=r$ and
  $$ \lim \limits_{n \to \infty} \frac{\lambda_n(z_n')}{\mu(z_n')}=1 \, .$$
  Hence the boundary case of Theorem \ref{thm:ahl124} follows from the interior case of Theorem \ref{thm:ahl124}, which we have already established.
      \end{proof}

      It is clear that the two alternatives (i) and (ii) of Theorem \ref{thm:ahl124} are mutually exclusive. In fact, if both conditions (i) and (ii) hold, then this would imply $\mu(\xi_0)=0$. Since by assumption, $\mu(z) \, |dz|$ has only isolated zeros, $\mu(z)>0$ for all $z \in \overline{K_r(\xi_0)} \setminus \{\xi_0\}$ for some $r>0$. But then (i) forces that $\lambda_n/\mu$ never vanishes in $\overline{K_r(\xi_0)}\setminus\{0\}$ for all but finitely many $n$, contradicting (ii).

\section{Infinitesimal rigidity in strongly convex domains}
\label{sec:8}

\subsection{Complex geodesics in strongly convex domains}
In this section we recall some results on complex geodesics in bounded strongly convex domains with smooth boundary as needed for our aims. 

In all this section, $\Omega\subset \C^N$ is a bounded strongly convex domain with smooth boundary.

 For $\zeta\in \partial \Omega$, let us denote by $T_\zeta^\C\partial \Omega$ the complex tangent space of $\partial \Omega$ at $\zeta$. In other words, 
\[
T_\zeta^\C\partial \Omega:= T_\zeta\partial \Omega \cap i(T_\zeta\partial \Omega).
\]

We denote by $K_\Omega(z,w)$ the Kobayashi distance between $z, w\in \Omega$.  We refer the reader to the books \cite{Abate, Kob} for some definitions and details. 

We recall the classical boundary estimates of the Kobayashi distance in strongly (pseudo)\-convex domains (see, {\sl e.g.}, \cite[Thm. 2.3.51, Thm. 2.3.52]{Abate}).

\begin{proposition}\label{Prop:stime-Kob1}
Let $\Omega\subset \C^N$ be a bounded strongly convex domain with smooth boundary and let $p_0\in \Omega$. Then there exists $C>0$ such that for all $z\in \Omega$
\[
-\frac{1}{2}\log\delta_\Omega(z)-C\leq K_\Omega(p_0, z)\leq -\frac{1}{2}\log\delta_\Omega(z)+C
\]
\end{proposition}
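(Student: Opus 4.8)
The plan is to bound $K_\Omega(p_0,\cdot)$ above and below by the Kobayashi distances of two explicit model domains --- an interior ball and a supporting half--plane --- using only that the Kobayashi distance is monotone decreasing under inclusions and is contracted by holomorphic maps. Write $\delta(z):=\delta_\Omega(z)$ and fix $r_0>0$ small enough that (i) on the collar $\{z\in\Omega:\delta(z)<r_0\}$ the nearest boundary point $q=q(z)\in\partial\Omega$ is uniquely defined and $z=q+\delta(z)\,n_q$, where $n_q$ is the inner unit normal at $q$, and (ii) the uniform interior ball condition holds with radius $r_0$, i.e.\ $B(q+r_0 n_q,r_0)\subset\Omega$ for every $q\in\partial\Omega$ (this is where smoothness of $\partial\Omega$ is used). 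For $\delta(z)\ge r_0$ the point $z$ stays in a fixed compact subset of $\Omega$, where both sides of the claimed double inequality are bounded; hence, after enlarging $C$, it suffices to produce the two one--sided estimates for $\delta(z)<r_0$ with constants independent of $q$.

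For the upper bound I would use the interior ball $B':=B(q+r_0 n_q,r_0)\subset\Omega$, which contains $z$ and satisfies $\dist(z,\partial B')=\delta(z)$. Putting $w_q:=q+\tfrac{r_0}{2}n_q$ and using monotonicity under $B'\subset\Omega$,
\[
K_\Omega(p_0,z)\le K_\Omega(p_0,w_q)+K_\Omega(w_q,z)\le\Bigl(\sup_{q\in\partial\Omega}K_\Omega(p_0,w_q)\Bigr)+K_{B'}(w_q,z).
\]
The supremum is finite since $\{w_q:q\in\partial\Omega\}$ is a compact subset of $\Omega$; rescaling $B'$ onto $\B^N$, together with the triangle inequality and the formula $K_{\B^N}(0,\zeta)=\tfrac12\log\tfrac{1+|\zeta|}{1-|\zeta|}$, gives $K_{B'}(w_q,z)\le-\tfrac12\log\delta(z)+\mathrm{const}(r_0)$. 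This yields $K_\Omega(p_0,z)\le-\tfrac12\log\delta(z)+C$.

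For the lower bound I would use convexity to get a supporting real hyperplane at $q$, realized as $\{\Re\ell_q=0\}$ for the $\C$--linear functional $\ell_q(w):=\langle w-q,n_q\rangle$ (with $\langle\cdot,\cdot\rangle$ the Hermitian product and $n_q$ regarded as a vector of $\C^N$); then $\Re\ell_q>0$ on $\Omega$ and, since $z-q=\delta(z)n_q$, one has $\Re\ell_q(z)=\delta(z)$. Thus $\ell_q:\Omega\to\{\Re w>0\}$ is holomorphic, so by the contracting property of the Kobayashi distance
\[
K_\Omega(p_0,z)\ge K_{\{\Re w>0\}}(\ell_q(p_0),\ell_q(z))\ge\tfrac12\bigl|\log\Re\ell_q(p_0)-\log\delta(z)\bigr|\ge-\tfrac12\log\delta(z)-C,
\]
where the middle inequality follows by dominating the half--plane Poincar\'e metric $\tfrac{|dw|}{2\Re w}$ below by the pull--back of $\tfrac{dt}{2t}$ on $(0,\infty)$ under $w\mapsto\Re w$, and the last one from $\delta_\Omega(p_0)\le\Re\ell_q(p_0)\le\diam\Omega$ (the first inequality holding because the supporting hyperplane misses $\Omega$). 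Combining the two halves proves the proposition.

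The only genuinely delicate point is matching the \emph{sharp constant $\tfrac12$} on both sides --- i.e.\ that near $\partial\Omega$ the Kobayashi metric behaves like $\tfrac1{2\delta}$ in the normal direction, no faster and no slower --- and the attendant bookkeeping to make the constants \emph{uniform in the boundary point $q$} (the interior ball radius, the reference points $w_q$, the supporting hyperplanes). Strong convexity is not needed for this statement beyond boundedness, convexity, and smoothness of $\partial\Omega$; it is the standing hypothesis ensuring the finer, higher--order boundary behaviour exploited later in the section.
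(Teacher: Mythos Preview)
Your argument is correct. The paper does not prove this proposition at all; it is quoted as a known boundary estimate and referenced to Abate's book \cite[Thm.~2.3.51, Thm.~2.3.52]{Abate}. Your proof is in fact the classical one for convex domains: the upper bound via an interior tangent ball and monotonicity of the Kobayashi distance, the lower bound via a supporting real hyperplane and the holomorphic linear functional $\ell_q$ mapping $\Omega$ into a half--plane. You are also right that strong convexity is not used for this particular estimate; bounded, smooth and convex suffices, and the strong convexity hypothesis is there for Lempert's theory used elsewhere in the section.
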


We are now going to briefly recall Lempert's theory of complex geodesics in smooth bounded strongly convex domains (the basic results are contained in \cite{L1, L2, L3}, see also \cite[Chapter 2.6]{Abate} for a systematic exposition of the subject). A {\sl complex geodesic} is a holomorphic map $\v\colon \D\to
\Omega$ which is an isometry between the hyperbolic distance $K_\D$ of
$\D=\{\zeta\in \C\mid |\zeta|<1\}$ and the Kobayashi distance $K_D$ in
$D$. A holomorphic map $h\colon  \D \to \Omega$ is a complex geodesic if and
only if it is an infinitesimal isometry at one---and hence any---point between the Poincar\'e metric $k_\D$ of $\D$ and the Kobayashi metric $k_\Omega$ of $\Omega$ (see \cite[Th\'eor\`eme~2]{L1}).

By \cite[Th\'eor\`eme~3]{L1}, any complex
geodesic extends smoothly to the boundary of the disc and $\v(\de
\D)\subset \de \Omega$. Moreover, (see \cite[Prop.~1.7 and Prop.~1.8]{Ab4}, \cite[Theorem~2]{CHL}) given any two points $z,w\in
\overline{\Omega}$, $z\neq w$, there exists a complex geodesic $\v\colon \D\to
\Omega$ such that $z,w\in \v(\oD)$. Such a geodesic is unique up to
pre-composition with automorphisms of $\D$. Conversely, if $\v\colon \D \to \Omega$ is a holomorphic map such that $K_\Omega(\v(\zeta_1), \v(\zeta_2))=K_{\D}(\zeta_1,\zeta_2)$ for some $\zeta_1\neq \zeta_2\in \D$, then $\v$ is a complex geodesic.

If $\varphi:\D \to \Omega$ is a complex geodesic, then for every $\theta\in \R$, $\varphi'(e^{i\theta})\in \C^N\setminus T_{\varphi(e^{i\theta})}^\C \partial \Omega$. 
Conversely, if $z\in
\overline{\Omega}$ and $v\in \C^N\setminus\{O\}$ (and $v\not\in T^\C_z\de \Omega$
if $z\in \de \Omega$) there exists a unique (still, up to pre-composition
with automorphisms of $\D$) complex geodesic $\v:\D\to \Omega$ such that
$z\in \v(\oD)$ and $\v(\oD)$ is parallel to $v$ (the case $z\in \Omega$ is  contained in  \cite[Th\'eor\`eme~3]{L1}, while, the case $z\in\partial\Omega$ is in \cite[Theorem~2]{CHL}).

If $\v\colon \D\to \Omega$ is a complex geodesic then there exists a 
holomorphic map $\tilde{\rho}\colon \Omega\to \D$, smooth up to  $\de
\Omega$ such that $\tilde{\rho} \circ \varphi = {\sf id}_{\D}$ (see \cite{L1}.  The map $\tilde\rho$ is defined using the ``dual map'' of $\varphi$ (see \cite[p. 435]{L1} or \cite[Proposition~2.6.22]{Abate}) and it is unique. The map $\tilde{\rho}$ is called the {\em left inverse } of $\varphi$. 
It is known that $\tilde{\rho}^{-1}(e^{i\theta})=\{\v(e^{i\theta})\}$ for all $\theta\in \R$, while the fibers $\tr^{-1}(\zeta)$ are the intersection of $\Omega$ with affine complex hyperplanes  for all $\zeta\in \D$ (see, {\sl e.g.}, \cite[Section 3]{BPT}). The map $\rho:=\varphi\circ \tilde\rho:D \to \varphi(\D)$ is called the {\sl Lempert projection}.

In the sequel we shall use the following:
\begin{proposition}\label{geo-converge}
Let $\Omega\subset \C^N$ be a bounded strongly convex domain with smooth boundary. Let $\{\v_k\}_{k\in \N}$ be a family of complex geodesics of $\Omega$ parameterized so that $\delta_\Omega(\v_k(0))=\max_{\zeta\in \D}\delta_\Omega(\v_k(\zeta))$.
\begin{enumerate}
\item If there is $c>0$ such that $\delta_\Omega(\v_k(0))\geq c$ for all $k$, then, up to extracting a subsequence, $\{\v_k\}$ converges uniformly on $\oD$ together with all its derivatives to a complex geodesic $\v:\D\to \Omega$.
\item If there is a sequence $t_k\in (0,1)$ converging to $1$ such that $\lim_{k\to \infty}\v_k(t_k)=\xi\in\partial \Omega$ and $\lim_{k\to \infty}\frac{\varphi_k'(t_k)}{|\varphi_k'(t_k)|}= v\in \C^N\setminus T_\xi^\C\partial \Omega$, then there is $c>0$ such that $\delta_\Omega(\v_k(0))\geq c$ for all $k$.
\item If $\v_k$ converges uniformly on $\oD$  to a complex geodesic $\v:\D\to \Omega$, then  the sequence $\{\rho_k\}$ of Lempert projections converges uniformly in the $\mathcal C^1$-topology of $\overline{\Omega}$ to the Lempert projection of $\varphi$.
\end{enumerate}
\end{proposition}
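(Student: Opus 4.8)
The plan is to obtain the three assertions from Lempert's theory of complex geodesics, together with standard normal-family arguments and the boundary behaviour of the Kobayashi (pseudo)metric. Recall that a complex geodesic $\v_k$ is an infinitesimal isometry, $k_\Omega(\v_k(\zeta);\v_k'(\zeta))=(1-|\zeta|^2)^{-1}$ for all $\zeta\in\D$, that it extends smoothly to $\oD$ with $\v_k(\de\D)\subset\de\Omega$, and that, up to precomposition with an automorphism of $\D$, it is determined by the point and direction at a single boundary value. For \emph{(1)}, boundedness of $\Omega$ makes $\{\v_k\}$ a normal family, so a subsequence converges locally uniformly on $\D$ to a holomorphic $\v\colon\D\to\overline{\Omega}$; the hypothesis $\delta_\Omega(\v_k(0))\ge c$ confines $\v_k(0)$ to a compact subset of $\Omega$, so after a further subsequence $\v_k(0)\to z_0\in\Omega$, and for each fixed $\zeta$ the isometry property gives $K_\Omega(\v_k(\zeta),z_0)\le K_\D(0,\zeta)+o(1)$, so $\{\v_k(\zeta)\}$ stays in a closed Kobayashi ball of $\Omega$, which is relatively compact in $\Omega$; hence $\v(\D)\subset\Omega$ and, passing to the limit in the isometry identity, $\v$ is a complex geodesic. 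To upgrade this to convergence on $\oD$ together with all derivatives one uses that a complex geodesic with its dual disc is a stationary disc solving a nonlinear Riemann--Hilbert boundary value problem on $\de\Omega$ whose linearization is uniformly elliptic by strong convexity; the a priori bound $\delta_\Omega(\v_k(0))\ge c$ controls the data, yielding $C^m(\oD)$ bounds for $\v_k$ uniform in $k$ for every $m$ (see \cite{L1,L2,L3,Abate,BPT}), whence an Arzel\`a--Ascoli argument. This uniform boundary regularity is the one technical point in \emph{(1)}.

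For \emph{(2)} I would argue by contradiction, assuming $r_k:=\delta_\Omega(\v_k(0))\to 0$ along a subsequence; since $r_k$ is the maximal depth of $\v_k(\D)$, the whole image lies within $r_k$ of $\de\Omega$. The point is to show this is incompatible with $\v_k'(t_k)/|\v_k'(t_k)|\to v\notin T_\xi^\C\de\Omega$. The mechanism is the non-isotropic (Pinchuk-type) rescaling of $\Omega$ at $\xi$ with dilation $r_k^{-1}$ in the normal direction and $r_k^{-1/2}$ in the complex tangential ones: under it $\Omega$ converges to the model Siegel domain and $\v_k$ (after reparametrization by automorphisms of $\D$) to a complex geodesic of the model whose image has depth of order one; consequently $\diam\v_k(\D)=O(\sqrt{r_k})\to 0$, so $\v_k(\D)\to\{\xi\}$, and, because the rescaling contracts the complex tangential directions relative to the normal one by the factor $r_k^{-1/2}\to\infty$, a generic tangent direction of the model geodesic corresponds to a complex tangential direction of $\v_k$; hence $\v_k'(\zeta)/|\v_k'(\zeta)|$ accumulates in $T_\xi^\C\de\Omega$ uniformly for $\zeta$ in compact subsets of $\D$, and in a more careful version also at $\zeta=t_k$, contradicting the hypothesis. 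This blow-up analysis --- essentially \cite[Lemma 3.5]{BPT} --- is the main obstacle; the coarse estimate of Proposition~\ref{Prop:stime-Kob1} alone does not suffice, since it does not see complex tangential directions.

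For \emph{(3)}, let $\tr_k\colon\Omega\to\D$ be the normalized left inverse of $\v_k$, so $\tr_k\circ\v_k=\mathrm{id}_{\D}$ and $\rho_k=\v_k\circ\tr_k$. The family $\{\tr_k\}$ is normal; any locally uniform subsequential limit $\sigma\colon\Omega\to\oD$ satisfies $\sigma\circ\v=\mathrm{id}_{\D}$ upon letting $k\to\infty$, hence is nonconstant and $\D$-valued, so by uniqueness of the normalized left inverse $\sigma=\tr$, the left inverse of $\v$; thus $\tr_k\to\tr$ locally uniformly on $\Omega$. For the convergence on $\overline{\Omega}$ with all derivatives one uses that Lempert's construction produces $\tr_k$ from the dual disc of $\v_k$, itself built from the conormal field $e^{i\theta}\mapsto$ (complex normal to $\de\Omega$ at $\v_k(e^{i\theta})$) via Cauchy-type integral / $\overline{\de}$-solution operators depending continuously on the $C^\infty(\oD)$ data; since $\v_k\to\v$ in $C^\infty(\oD)$ by hypothesis, this gives $\tr_k\to\tr$ in $C^\infty(\overline{\Omega})$, and composing, $\rho_k=\v_k\circ\tr_k\to\v\circ\tr=\rho$ in $C^\infty(\overline{\Omega})$.
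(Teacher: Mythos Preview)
The paper does not actually give a proof of this proposition: immediately before the statement it writes that ``the following results follow from, e.g., \cite[Corollary~2.3, Lemma~3.5]{BPT}'', and then simply states the proposition without further argument. So there is no ``paper's own proof'' to compare against beyond that citation.

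Your sketch is essentially an outline of how those cited results are obtained. Part~(1) is the standard normal-family-plus-Lempert-regularity argument; your invocation of the stationary-disc/Riemann--Hilbert formulation for the uniform $C^m(\oD)$ estimates is the right mechanism and is exactly what underlies \cite[Corollary~2.3]{BPT}. For part~(2) you correctly identify the contrapositive and the non-isotropic rescaling as the key idea, and you yourself note that this is ``essentially \cite[Lemma~3.5]{BPT}''; the one soft spot is the clause ``and in a more careful version also at $\zeta=t_k$'', which is precisely where the work lies, but since you are pointing to the same lemma the paper cites, this is not a discrepancy. Part~(3) --- normal family for the left inverses, uniqueness of the normalized left inverse to identify the limit, then continuous dependence of Lempert's dual-disc construction on the $C^\infty(\oD)$ data of $\v_k$ --- is again the standard route.

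In short: you have expanded the paper's one-line citation into a reasonable proof sketch following the same sources the authors point to; there is no alternative approach in the paper to contrast with.
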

\begin{proof}
(1) By hypothesis, $\{\v_k\}$ is not compactly divergent. Since $\Omega$ is complete hyperbolic, hence taut, it follows that one can extract a subsequence converging uniformly on compacta. The limit is clearly a complex geodesic, since $K_\Omega$ is continuous on compacta of $\Omega$. By, {\sl e.g.},  \cite[Corollary 2.3]{BPT} we have the result.

(2) It follows from \cite[Corollary~1]{Huang}.

(3) It follow from, {\sl e.g.}, \cite[Lemma~3.4, Lemma~3.5]{BPT}.
\end{proof}

\subsection{Rigidity results}

If $\zeta \in \partial \Omega$ and $v\in \C^N$, we let $\Pi_\zeta(v)$ be the orthogonal projection of $v$ onto $T_\zeta^\C\partial \Omega$. 

Moreover, for $z\in \Omega$, we also let $\pi(z)\in \partial \Omega$ be such that $|z-\pi(z)|=\delta_\Omega(z)$. The choice of $\pi(z)$ might not be unique in general, but, if $z$ is sufficiently close to $\partial \Omega$, then $\pi(z)$ is uniquely defined. 

\begin{lemma}\label{lemma-slice-auto}
Let $D, D'\subset \C^N$ be two bounded strongly convex domains with smooth boundary. Let $F:D\to D'$ be holomorphic.  Let $p\in \partial D$. Let  $\varphi:\D \to D$ be a complex geodesic such that $\varphi(1)=p$. Suppose that there exists an increasing sequence $\{r_n\}\subset (0,1)$ converging to $1$ such that, 
\begin{itemize}
\item if $\{F(\varphi(r_{n_k}))\}$ is converging to some $q\in \partial D'$ then
\begin{equation}\label{eq:bound-hyp1}
\limsup_{k\to \infty}|\Pi_{\pi(F(\varphi(r_{n_k})))}(dF_{\varphi(r_{n_k})}(\varphi'(r_{n_k})))|<+\infty,
\end{equation}
\item and 
\begin{equation}\label{eq:same-hyp-be}
k_{D'}(F(\varphi(r_n)); dF_{\varphi(r_n)}(\varphi'(r_n)))=k_D(\varphi(r_n);\varphi'(r_n))+o\left(\delta_D(\varphi(r_n))\right).
\end{equation}
\end{itemize}
 Then  $\D\ni \zeta\mapsto F(\varphi(\zeta))\in D'$ is a complex geodesic in $D'$ and $F|_{\varphi(\D)}:\varphi(\D)\to F(\varphi(\D))$ is a biholomorphism.
  \end{lemma}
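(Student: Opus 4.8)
The plan is to reduce the assertion to the one--dimensional sequential Schwarz--Pick lemma at the boundary, Corollary~\ref{thm:schwarzpicksequence}, applied to the compositions of $g:=F\circ\varphi:\D\to D'$ with the left inverses of suitable complex geodesics of $D'$.

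First I would record the relevant normalizations. Since $\varphi$ is a complex geodesic it is an infinitesimal isometry, so $k_D(\varphi(r_n);\varphi'(r_n))=k_\D(r_n;1)=1/(1-r_n^2)$; also, applying Proposition~\ref{Prop:stime-Kob1} on both sides of $K_{D'}(F(\varphi(0)),F(\varphi(r)))\le K_D(\varphi(0),\varphi(r))=K_\D(0,r)$ yields $\delta_D(\varphi(r))\asymp 1-r$ and $\delta_{D'}(F(\varphi(r)))\gtrsim 1-r$ as $r\to1^-$. Thus \eqref{eq:same-hyp-be} reads
\[
(1-r_n^2)\,k_{D'}(g(r_n);g'(r_n))=1+o\bigl((1-r_n)^2\bigr),
\]
and in particular $g'(r_n)\neq 0$ for all large $n$. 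For each such $n$ let $\psi_n:\D\to D'$ be the complex geodesic of $D'$ through $g(r_n)$ parallel to $g'(r_n)$, normalized so that $\delta_{D'}(\psi_n(0))=\max_{\zeta\in\D}\delta_{D'}(\psi_n(\zeta))$, let $\tilde\rho_n:D'\to\D$ be its left inverse, and put $\Phi_n:=\tilde\rho_n\circ g:\D\to\D$. Writing $g(r_n)=\psi_n(\tau_n)$ and $\psi_n'(\tau_n)=\lambda_ng'(r_n)$, the identity $\tilde\rho_n\circ\psi_n=\mathrm{id}_\D$ together with the homogeneity of the Kobayashi metric gives $\Phi_n(r_n)=\tau_n$ and $|\Phi_n'(r_n)|/(1-|\tau_n|^2)=k_{D'}(g(r_n);g'(r_n))$, hence $\Phi_n^h(r_n)=(1-r_n^2)\,k_{D'}(g(r_n);g'(r_n))=1+o((1-r_n)^2)$.

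Corollary~\ref{thm:schwarzpicksequence} now applies to $(\Phi_n)$ along $(r_n)$ and tells us that $\Phi_n^h\to1$ locally uniformly and that every locally uniform subsequential limit of $(\Phi_n)$ is either an automorphism of $\D$ or a unimodular constant. The decisive step is then to show that, after passing to a subsequence, the geodesics $\psi_n$ converge uniformly on $\oD$ together with all derivatives to a complex geodesic $\psi_\infty$ of $D'$. Granting this, Proposition~\ref{geo-converge}(3) shows that the Lempert projections, and hence the left inverses $\tilde\rho_n$, converge to those of $\psi_\infty$ locally uniformly on $D'$, so $\Phi_n=\tilde\rho_n\circ g\to\Phi_\infty:=\tilde\rho_\infty\circ g$ locally uniformly in $\D$. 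Since $\Phi_\infty$ maps $\D$ into $\D$ it is not a unimodular constant, so $\Phi_\infty\in\Aut$. Setting $h:=g\circ\Phi_\infty^{-1}$ we get $\tilde\rho_\infty\circ h=\mathrm{id}_\D$, and sandwiching
\[
k_\D(\zeta;v)=k_\D\bigl(\tilde\rho_\infty(h(\zeta));d(\tilde\rho_\infty)_{h(\zeta)}(dh_\zeta(v))\bigr)\le k_{D'}(h(\zeta);dh_\zeta(v))\le k_\D(\zeta;v)
\]
between the distance--decreasing maps $\tilde\rho_\infty$ and $h$ shows that $h$ is an infinitesimal isometry, i.e.\ a complex geodesic of $D'$; hence so is $g=h\circ\Phi_\infty$. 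Finally a complex geodesic is injective, so $F|_{\varphi(\D)}=g\circ\varphi^{-1}$ is a biholomorphism onto $F(\varphi(\D))=g(\D)$, which is the remaining claim.

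I expect the subconvergence $\psi_n\to\psi_\infty$ to be the main obstacle. If $\delta_{D'}(g(r_n))$ stays bounded below along a subsequence, then $\delta_{D'}(\psi_n(0))\ge\delta_{D'}(g(r_n))$ is bounded below and Proposition~\ref{geo-converge}(1) gives the convergence at once. Otherwise $\delta_{D'}(g(r_n))\to0$; passing to a subsequence with $g(r_n)\to q\in\partial D'$, hypothesis~\eqref{eq:bound-hyp1} bounds $|\Pi_{\pi(g(r_n))}(g'(r_n))|$, and, using that the complex--tangential part of a direction contributes at most $O(\delta_{D'}(g(r_n))^{-1/2})=O((1-r_n)^{-1/2})=o((1-r_n)^{-1})$ to $k_{D'}$ while $k_{D'}(g(r_n);g'(r_n))\asymp(1-r_n)^{-1}$ and $\delta_{D'}(g(r_n))\gtrsim 1-r_n$, one deduces that the complex--normal component of $g'(r_n)$ is bounded away from $0$; consequently every subsequential limit of $g'(r_n)/|g'(r_n)|$ lies outside $T^\C_q\partial D'$. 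Proposition~\ref{geo-converge}(2) then prevents the $\psi_n$ from degenerating and Proposition~\ref{geo-converge}(1) yields the desired convergence. The delicate points in this last argument are the precise boundary asymptotics of the Kobayashi metric of a strongly convex domain and the bookkeeping of parametrizations needed in order to invoke Proposition~\ref{geo-converge}(2) at the parameter values $\tau_n$ with $\psi_n(\tau_n)=g(r_n)$.
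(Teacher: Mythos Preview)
Your proposal is correct and follows essentially the same route as the paper: construct complex geodesics $\psi_n$ of $D'$ through $g(r_n)=F(\varphi(r_n))$ in the direction $g'(r_n)$, compose $g$ with their left inverses to obtain selfmaps $\Phi_n$ of $\D$ satisfying $\Phi_n^h(r_n)=1+o((1-r_n)^2)$, apply the sequential Schwarz--Pick lemma (Corollary~\ref{thm:schwarzpicksequence}), and finish with the sandwich $k_\D\le k_{D'}(h;\cdot)\le k_\D$ once a limit geodesic $\psi_\infty$ has been produced. The only noteworthy difference is the case split: the paper divides according to whether $x_n:=|g'(r_n)|$ is bounded or tends to $\infty$, and in the bounded case it derives the non--tangentiality of $w_n=g'(r_n)/x_n$ from Aladro's estimates together with $\delta_{D'}(g(r_n))/\delta_D(\varphi(r_n))\ge c$ \emph{without} invoking hypothesis~\eqref{eq:bound-hyp1}; you instead split on whether $\delta_{D'}(g(r_n))$ stays bounded below, and in the boundary case you use~\eqref{eq:bound-hyp1} throughout to bound the tangential part and Aladro to force the normal part of $g'(r_n)$ to be bounded below. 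Both organizations lead to the same conclusion, and the delicate points you flag (the precise boundary asymptotics of $k_{D'}$ and the verification that the parameters $\tau_n$ with $\psi_n(\tau_n)=g(r_n)$ can be taken to approach $1$ so that Proposition~\ref{geo-converge}(2) applies) are exactly the places where the paper also relies on Aladro's estimates and on the cited properties of complex geodesics.
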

\begin{proof}
Let us denote   $v_n:=\varphi'(r_n)$ and $x_n:=|dF_{\varphi(r_n)}(\varphi'(r_n))|$. Note that, by \eqref{eq:same-hyp-be}, $x_n>0$ eventually. Therefore, we can define $w_n:=\frac{dF_{\varphi(r_n)}(v_n)}{x_n}$, $\delta_n:=\delta_D(\varphi(r_n))$ and $\delta'_n:=\delta_{D'}(F(\varphi(r_n)))$. 

We consider two cases,  which we can always reduce to: either there exists $C>0$ such that $x_n\leq C$ for all $n$, or $\lim_{n\to \infty}x_n=\infty$.

\medskip

\noindent {\sl Case 1. There exists $C>0$ such that $x_n\leq C$ for all $n$}. We know that $v_n\to v$ for some $v\in \C^N\setminus T_p^\C\partial D$, and 
\[
k_\D(r_n; 1)=k_D(\varphi(r_n);\varphi'(r_n)).
\]
Hence, $k_D(\varphi(r_n);\varphi'(r_n))\to \infty$ as $n\to \infty$, and therefore by \eqref{eq:same-hyp-be},
\[
k_{D'}(F(\varphi(r_n)); dF_{\varphi(r_n)}(\varphi'(r_n)))=x_n k_{D'}(F(\varphi(r_n)); w_n)
\]
converges to $\infty$ as well. Since $|w_n|\equiv 1$ and $x_n\leq C$, it follows that $\{F(\varphi(r_n))\}$ is not relatively compact in $D'$, and, up to subsequences, we can assume it converges to some $q\in \partial D'$. 

If $z\in D'$ is sufficiently close to $\partial D'$ there exists a unique $\pi(z)\in \partial D'$ closest to $z$, and for $w\in T_zD'=\C^N$, there is a unique orthogonal decomposition  $w=w^T+\Pi_{\pi(z)}(w)$. In order to avoid burdening notation, we set $w^\perp:=\Pi_{\pi(z)}(w)$ in the rest of the proof.

By Aladro's estimates (see \cite{Aladro}, see also \cite{Graham, Ma}), we have
\begin{equation}\label{Eq:Aladro1}
k_{D'}(F(\varphi(r_n)); w_n)\sim \left(\frac{|w^\perp_n|}{2\delta'_n}+ \frac{|w_n^T|^2}{4(\delta'_n)^2} \right)^{1/2},
\end{equation}
where, as customary, we use here the following notation: if $f(n)$ and $g(n)$ are functions depending on $n$, we write $f(n)\sim g(n)$ provided there exists $C>1$ such that 
\[
\frac{1}{C}\leq \frac{f(n)}{g(n)}\leq C
\]
for all $n$. By the same token, we have
\begin{equation}\label{Eq:Aladro2}
k_{D}(\varphi(r_n); v_n)\sim \left(\frac{|v^\perp_n|}{2\delta_n}+ \frac{|v_n^T|^2}{4(\delta_n)^2} \right)^{1/2}\sim \frac{1}{\delta_n},
\end{equation}
where for the last $\sim$ we used the fact that $v_n\to v$ for some $v\in \C^n\setminus T_p^\C\partial D$, hence $v_n^T\to v^T\neq 0$. 

By hypothesis, $k_{D'}(F(\varphi(r_n)); x_nw_n)\sim k_D(\varphi(r_n); v_n)$, hence, by \eqref{Eq:Aladro1} and \eqref{Eq:Aladro2},
\begin{equation}\label{Eq:same-beh}
 x_n\left(\frac{|w^\perp_n|\delta'_n}{2}+ \frac{|w_n^T|^2}{4} \right)^{1/2}\sim \frac{\delta'_n}{\delta_n}.
\end{equation}
Now, let $p_0\in D$ and $q_0:=F(p_0)\in D'$. By Proposition~\ref{Prop:stime-Kob1},  there exists $C\in \R$ such that for all $n$, 
\[
\frac{1}{2}\log\frac{\delta'_n}{\delta_n}\geq K_D(p_0, \varphi(r_n))-K_{D'}(q_0, F(\varphi(r_n)))+C\geq C>-\infty.
\]
Hence, there exists $c>0$ such that $\frac{\delta'_n}{\delta_n}\geq c$ for all $n$. Therefore, from \eqref{Eq:same-beh} (since $x_n$ is bounded and $|w_n^\perp|, |w_n|^T\leq 1$ for all $n$), we obtain that
\[
\liminf_{n\to \infty}x_n>0, \quad \liminf_{n\to \infty}|w_n^T|>0.
\]
In particular, up to subsequences, we can assume that $w_n\to w\in \C^N$, and, since $w_n^T\to w^T\neq 0$, it follows that  $w\not\in T_q^\C\partial D'$.

For every $n$, let $\eta_n:\D \to D'$ be a complex geodesic such that $\delta_{D'}(\eta_n(0))=\max_{\zeta\in \D} \delta_{D'}(\eta_n(\zeta))$, that $F(\varphi(r_n))=\eta_n(t_n)$ for some $t_n\in (0,1)$ and $\eta'_n(t_n)=\lambda_n w_n$ for some $\lambda_n> 0$. Let $\rho_n:D'\to \eta_n(\D)$ be the Lempert projection associated with $\eta_n$. 

Since $w_n\to w\in \C^N\setminus T_q^\C\partial D'$, by Proposition~\ref{geo-converge}, up to extracting subsequences, $\{\eta_n\}$ converges uniformly on $\oD$ to a complex geodesic $\eta:\D\to D'$. Since $\eta_n(t_n)\to q$, it follows that $\eta(1)=q$. Moreover, $\eta_n^{-1}\circ \rho_n$ converges uniformly on   
$\overline{D}$  to $\eta^{-1}\circ \rho$, where $\rho$ is the Lempert projection associated with $\eta$. Therefore, if we let 
\[
f_n:=\eta_n^{-1} \circ \rho_n \circ F\circ \varphi:\D \to \D,
\]
we have that $f_n$ converges uniformly on compacta of $\D$ to the  holomorphic function $f:=\eta^{-1} \circ \rho \circ F\circ \varphi:\D \to \D$. Moreover, taking into account that, by construction, $\rho_n(F(\varphi(r_n)))=F(\varphi(r_n))$ and 
\[
d(\rho_n)_{F(\varphi(r_n))}(dF_{\varphi(r_n)}(\varphi'(r_n)))=dF_{\varphi(r_n)}(\varphi'(r_n)),
\]
since $\eta_n$ are complex geodesics, we have by \eqref{eq:same-hyp-be},
\begin{equation}\label{Eq:good-beh}
\begin{split}
f^h_n(r_n)=\frac{k_\D(f_n(r_n); f_n'(r_n))}{k_\D(r_n;1)}&=\frac{k_{D'}(\rho_n(F(\varphi(r_n))); d(\rho_n)_{F(\varphi(r_n))}(dF_{\varphi(r_n)}(\varphi'(r_n))))}{k_{D}(\varphi(r_n);\varphi'(r_n))}\\
&=\frac{k_{D'}(F(\varphi(r_n)); dF_{\varphi(r_n)}(\varphi'(r_n)))}{k_{D}(\varphi(r_n);\varphi'(r_n))}=1+\frac{o(\delta_n)}{k_{D}(\varphi(r_n);\varphi'(r_n))}.
\end{split}
\end{equation}
By Proposition~\ref{Prop:stime-Kob1}, there exist $C_1, C_2\in \R$ such that for all $n$,
\begin{equation*}
\begin{split}
C_1&=[K_\D(0,r_n)-K_D(\varphi(0), \varphi(r_n))]-C_1\\&\leq \frac{1}{2}\log\frac{\delta_n}{1-r_n}\leq [K_\D(0,r_n)-K_D(\varphi(0), \varphi(r_n))]+C_2=C_2.
\end{split}
\end{equation*}
Therefore, $\delta_n\sim (1-r_n)$. Moreover,  
\[
k_{D}(\varphi(r_n);\varphi'(r_n))=k_\D(r_n;1)=\frac{1}{1-r_n^2}.
\]
Thus, by \eqref{Eq:good-beh}, we have
\[
f^h_n(r_n)=1+o((1-r_n)^2).
\]
Since we already know that the limit $f$ of $\{f_n\}$ is not an unimodular constant,  it follows from Corollary~\ref{thm:schwarzpicksequence} that  $f$ is an automorphism of $\D$. 

Since $f=\eta^{-1} \circ \rho \circ F\circ \varphi$, we have $\eta\circ f=\rho \circ F\circ \varphi$. Taking into account that $\eta\circ f$ is an isometry between $K_\D$ and $K_{D'}$, we have for all $\zeta, \zeta'\in \D$,
\begin{equation*}
\begin{split}
K_\D(\zeta, \zeta')&\geq K_{D'}(F(\varphi(\zeta)), F(\varphi(\zeta')))\geq K_{D'}(\rho(F(\varphi(\zeta))), \rho(F(\varphi(\zeta'))))\\&=K_{D'}(\eta(f(\zeta)), \eta(f(\zeta')))=K_\D(\zeta, \zeta').
\end{split}
\end{equation*}
Hence, $K_{D'}(F(\varphi(\zeta)), F(\varphi(\zeta')))=K_\D(\zeta, \zeta')$ for all $\zeta, \zeta'\in \D$, and $F\circ \varphi:\D\to D'$ is a complex geodesic and, clearly,  $F|_{\varphi(\D)}:\varphi(\D)\to F(\varphi(\D))$ is a biholomorphism. 

\medskip

\noindent{\sl Case 2. $\lim_{n\to \infty} x_n=\infty$}. We retain the notations introduced in Case 1.

If $\{F(\varphi(r_n))\}$ is relatively compact in $D'$ (a case that, {\sl a posteriori}, cannot occur), we can assume that $\{F(\varphi(r_n))\}$ converges to some $q\in D'$ and $w_n\to w\in \C^N$, $|w|=1$. Therefore,  $\{\eta_n\}$ converges uniformly on $\oD$ to a complex geodesic $\eta:\D\to D'$ such that $\eta(t_0)=q$, (where $t_0=\lim_{n\to \infty}t_n \in (0,1)$) and $\eta'(t_0)=\lambda w$ for some $\lambda>0$. Hence, arguing as in Case 1, we see that $\{f_n\}$ converges uniformly on compacta to an automorphism $f$ of $\D$, $F\circ \varphi:\D\to D'$ is a complex geodesic and $F|_{\varphi(\D)}:\varphi(\D)\to F(\varphi(\D))$ is a biholomorphism.

In case $\{F(\varphi(r_n))\}$ is not relatively compact in $D'$, up to passing to a subsequence, we can assume that $\{F(\varphi(r_n))\}$ converges to some $q\in \partial D'$ and \eqref{eq:bound-hyp1} holds for all $r_n$'s. Hence, $\lim_{n\to \infty}x_n= \infty$ and $\limsup_{n\to \infty}|\Pi_{F(\varphi(r_n))}(dF_{\varphi(r_n)}(v_n))|<+\infty$. Therefore,
\[
\lim_{n\to \infty}|w_n^\perp|=\lim_{n\to\infty}\frac{|\Pi_{F(\varphi(r_n))}(dF_{\varphi(r_n)}(v_n))|}{x_n}= 0.
\]
 It follows that $w_n\to w\in \C^N\setminus T_q^\C\partial D'$, and we can repeat the argument in Case 1 to end the proof.
\end{proof}

The same argument in the proof of Lemma~\ref{lemma-slice-auto} shows the following boundary infinitesimal characterization of complex geodesics:

\begin{proposition}\label{prop:char-geo}
Let $\Omega\subset \C^N$ be a bounded strongly convex domain with smooth boundary. Let $f:\D\to \Omega$ be holomorphic.  Then $f$ is a complex geodesic if and only if  there exists an increasing sequence $\{r_n\}\subset (0,1)$ converging to $1$ such that, 
\begin{itemize}
\item if $\{f(r_{n_k})\}$ is converging to some $q\in \partial \Omega$ then
\begin{equation*}
\limsup_{k\to \infty}|\Pi_{\pi(f(r_{n_k}))}(f'(r_{n_k}))|<+\infty,
\end{equation*}
\item and 
\begin{equation*}
k_{\Omega}(f(r_n); f'(r_n))=\frac{1}{1-r_n^2}+o\left(1-r_n\right).
\end{equation*}
\end{itemize}
  \end{proposition}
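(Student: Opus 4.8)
The plan is to obtain both implications from Lemma~\ref{lemma-slice-auto}, the non-trivial one being literally its special case $D=\D$, $\varphi=\id_\D$, $F=f$. The necessity of the two boundary conditions is immediate from the definition of a complex geodesic and from Lempert's theory: if $f$ is a complex geodesic it is an infinitesimal isometry, so $k_\Omega(f(\zeta);f'(\zeta))=k_\D(\zeta;1)=\tfrac{1}{1-|\zeta|^2}$ for every $\zeta\in\D$, and hence for $r_n:=1-\tfrac1n$ the second condition holds with vanishing error term; moreover $f$ extends smoothly to $\oD$ with $f(\de\D)\subset\de\Omega$ and $f'(e^{i\theta})\notin T^\C_{f(e^{i\theta})}\de\Omega$, so $f(r_n)\to f(1)\in\de\Omega$, $\pi(f(r_n))\to f(1)$, $f'(r_n)\to f'(1)$, and therefore $\Pi_{\pi(f(r_n))}(f'(r_n))\to\Pi_{f(1)}(f'(1))$ stays bounded --- along the whole sequence, a fortiori along every subsequence --- which is the first condition.

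For the converse, assume the two boundary conditions hold for some increasing $\{r_n\}\subset(0,1)$ with $r_n\to1$. I would apply Lemma~\ref{lemma-slice-auto} with $D=\D$ (a bounded strongly convex domain with smooth boundary in $\C$), $D'=\Omega$, $p=1$, and $\varphi=\id_\D\colon\D\to\D$, which is a complex geodesic with $\varphi(1)=1$. With this choice $\varphi'(r_n)=1$, so $dF_{\varphi(r_n)}(\varphi'(r_n))=f'(r_n)$, $k_D(\varphi(r_n);\varphi'(r_n))=k_\D(r_n;1)=\tfrac{1}{1-r_n^2}$, and $\delta_D(\varphi(r_n))=\delta_\D(r_n)=1-r_n$; therefore hypotheses~\eqref{eq:bound-hyp1} and~\eqref{eq:same-hyp-be} of that lemma become, term by term, the two conditions in the statement, and its conclusion says exactly that $\zeta\mapsto F(\varphi(\zeta))=f(\zeta)$ is a complex geodesic of $\Omega$ (and, as a bonus, injective).

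Equivalently --- and this is what the remark preceding the statement alludes to --- one may rerun the proof of Lemma~\ref{lemma-slice-auto} verbatim in this simpler situation, with $x_n:=|f'(r_n)|$ playing the role of $|dF_{\varphi(r_n)}(\varphi'(r_n))|$: the dichotomy $\limsup_n x_n<\infty$ versus $x_n\to\infty$, Aladro's boundary estimates for $k_\Omega$ combined with $k_\D(r_n;1)\sim\tfrac{1}{1-r_n}$, the lower bound on $\delta_\Omega(f(r_n))/(1-r_n)$ from Proposition~\ref{Prop:stime-Kob1}, the extraction via Proposition~\ref{geo-converge} of a limiting complex geodesic $\eta$ through the boundary cluster point of $\{f(r_n)\}$, and the final appeal to Corollary~\ref{thm:schwarzpicksequence} for $f_n=\eta_n^{-1}\circ\rho_n\circ f$ (whose locally uniform limit is not a unimodular constant because $f_n(0)\to\eta^{-1}(\rho(f(0)))\in\D$) all carry over unchanged. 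I expect no genuinely new obstacle here: the only thing to be careful about is the bookkeeping of the reduction --- that $D=\D$, $\varphi=\id_\D$ is an admissible instance of Lemma~\ref{lemma-slice-auto} and that the hypotheses coincide with the stated conditions --- since all the real analytic content (Aladro's asymptotics, stability of complex geodesics and Lempert projections, the sequential boundary Schwarz--Pick lemma) has already been established.
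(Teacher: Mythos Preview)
Your proposal is correct and matches the paper's own approach: the paper simply states that ``the same argument in the proof of Lemma~\ref{lemma-slice-auto}'' yields the proposition, and your write-up is a faithful elaboration of that remark, including the easy necessity direction. One cosmetic point: Lemma~\ref{lemma-slice-auto} as stated takes $D,D'\subset\C^N$ in the \emph{same} ambient dimension, so the literal instantiation $D=\D\subset\C$, $D'=\Omega\subset\C^N$ is not formally covered when $N>1$; but as you note, the proof of that lemma never uses equal dimensions, so your ``rerun the proof verbatim'' alternative is exactly what is intended and is what the paper does.
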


Now, we use the previous lemma to prove an infinitesimal boundary rigidity  principle.

Let $D\subset \C^N$ be a bounded domain with smooth boundary. Let $p\in \partial D$ and $v\in \C^N\setminus T_p^\C\partial D$, $|v|=1$ a vector pointing inside $D$ (namely, $p+rv\in D$ for small $r>0$). Given $\alpha\in (0,1)$,   the  {\sl cone} $C_D(p, v, \alpha)$ in $D$ of vertex $p$, direction $v$ and aperture $\alpha$ is 
\[
C_D(p, v, \alpha):=\{z\in D: \Re \langle z-p, \overline{v}\rangle >\alpha |z-p|\},
\]
where $\langle z,w\rangle=\sum_{j=1}^N z_j\overline{w_j}$ is the standard Hermitian product in $\C^N$.

\begin{theorem}\label{thm:convex-rigidity}
Let $D, D'\subset \C^N$ be two bounded strongly convex domains with smooth boundary. Let $F:D\to D'$ be holomorphic.  Let $p\in \partial D$. Then $F$ is a biholomorphism if and only if  for every  $v\in \C^N\setminus T^\C_p\partial D$, $|v|=1$ pointing inside $D$ there exist $\alpha\in (0,1)$ and $C>0$ such that for all $w\in \C^N$ with $|w|=1$ and $\Re\langle w, \overline{v}\rangle>\alpha$,
\begin{itemize}
\item[a)] if $\{z_k\}\subset C_D(p, v, \alpha)$ is a sequence converging to $p$ such that $\{F(z_k)\}$ has no accumulation points in $D'$ then 
\begin{equation*}
\limsup_{k\to \infty}|\Pi_{\pi(F(z_k))}(dF_{z_k}(w))|\leq C,
\end{equation*}
\item[b)] and 
\begin{equation*}
k_{D'}(F(z); dF_{z}(w))=k_D(z;w)+o\left(\delta_D(z)\right), 
\end{equation*}
when $\quad C_D(p, v, \alpha)\ni z\to p$, uniformly in $w$.
\end{itemize}
  \end{theorem}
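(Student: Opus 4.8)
Necessity is the easy direction: if $F$ is a biholomorphism, then by Fefferman's theorem (or directly from Lempert's theory in the strongly convex case) $F$ extends to a diffeomorphism of $\overline D$ onto $\overline{D'}$, so $dF$ is bounded up to the boundary and $F$ is a homeomorphism of the closures, which gives (1) and (2).a); moreover a biholomorphism is a Kobayashi isometry, so $k_{D'}(F(z);dF_z(w))=k_D(z;w)$ identically, which is (2).b) with vanishing error. So the plan is to prove the converse. Fix $p\in\partial D$ for which (1) and (2) hold, and fix a unit vector $v\in\C^N\setminus T^\C_p\partial D$ pointing inside $D$. By Lempert's theory there is a complex geodesic $\varphi_v\colon\D\to D$, smooth up to $\oD$, with $\varphi_v(1)=p$ and $\varphi_v'(1)$ a negative multiple of $v$; in particular $\varphi_v'(1)\ne 0$.

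Since $(\varphi_v(r)-p)/|\varphi_v(r)-p|\to v$ and $-\varphi_v'(r)/|\varphi_v'(r)|\to v$ as $r\uparrow 1$, for $r$ sufficiently close to $1$ one has $\varphi_v(r)\in C_D(p,v,\alpha)$ and $\Re\langle -\varphi_v'(r)/|\varphi_v'(r)|,\overline v\rangle>\alpha$, where $\alpha=\alpha(v)$ is given by hypothesis (2). Applying (2) with $z=\varphi_v(r)$ and $w=-\varphi_v'(r)/|\varphi_v'(r)|$, and exploiting the fact that the infinitesimal Kobayashi metric is insensitive to the sign of the tangent vector (so $k_D(z;-u)=k_D(z;u)$, $dF_z(-u)=-dF_z(u)$, $\Pi(-u)=-\Pi(u)$), together with $\delta_D(\varphi_v(r))\asymp 1-r$ (Proposition \ref{Prop:stime-Kob1}) and the two-sided bounds $0<\inf|\varphi_v'|\le\sup|\varphi_v'|<\infty$ near $1$, one recovers exactly conditions \eqref{eq:bound-hyp1} and \eqref{eq:same-hyp-be} of Lemma \ref{lemma-slice-auto} for $\varphi_v$ and any sequence $r_n\uparrow 1$. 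Lemma \ref{lemma-slice-auto} then yields that $\psi_v:=F\circ\varphi_v\colon\D\to D'$ is a complex geodesic of $D'$ and that $F|_{\varphi_v(\D)}\colon\varphi_v(\D)\to\psi_v(\D)$ is a biholomorphism.

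Next I would globalize. As $v$ ranges over the transverse inward unit vectors at $p$, the discs $\varphi_v(\D)$ foliate $D$ (every $z\in D$ lies on the complex geodesic joining it to $p$, and two such with distinct complex directions at $p$ are disjoint in $D$ by uniqueness of geodesics through two points); using Gromov hyperbolicity of $(D,K_D)$ one sees that $K_D(\varphi_{v_1}(r),\varphi_{v_2}(r))$ stays bounded as $r\uparrow 1$ because both rays land at $p$, hence so does $K_{D'}(\psi_{v_1}(r),\psi_{v_2}(r))$, which is impossible unless all the $\psi_v(\D)$ terminate at one common point $q\in\partial D'$; thus $F$ carries the foliation of $D$ by geodesics through $p$ into the foliation of $D'$ by geodesics through $q$. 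Let $\tilde\rho_v\colon D\to\D$ and $\tilde\rho'_v\colon D'\to\D$ be the Lempert left inverses of $\varphi_v$ and $\psi_v$. From $(\tilde\rho'_v\circ F)\circ\varphi_v=\id_\D$ the map $\varphi_v\circ\tilde\rho'_v\circ F$ is idempotent with image $\varphi_v(\D)$, i.e.\ a holomorphic retraction of $D$ onto $\varphi_v(\D)$; by uniqueness of the Lempert projection it equals $\varphi_v\circ\tilde\rho_v$, so $\tilde\rho'_v\circ F=\tilde\rho_v$ for every $v$. Since the (affine complex hyperplane) fibres of the family $\{\tilde\rho_v\}_v$ separate the points of $D$ — a structural property of the geodesic foliation of a strongly convex domain through a boundary point — the relation $\tilde\rho_v(z_1)=\tilde\rho'_v(F(z_1))$ forces $F$ to be injective, hence a biholomorphism onto the open set $F(D)\subseteq D'$. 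Finally $F$ is proper: if $z_n\to\zeta_0\in\partial D$ while $F(z_n)\to w_*\in D'$, write $z_n=\varphi_{v(z_n)}(\zeta_n)$; passing to a subsequence, either $v(z_n)$ tends to a transverse direction $v_*$, in which case $\psi_{v(z_n)}\to\psi_{v_*}$ uniformly on $\oD$ (Proposition \ref{geo-converge}) and $\zeta_n\to\partial\D$, so $F(z_n)$ accumulates on $\partial D'$, a contradiction; or $v(z_n)$ degenerates to $T^\C_p\partial D$, which forces $\zeta_0=p$ (for $\zeta_0\ne p$ the geodesics joining $z_n$ to $p$ converge to the one joining $\zeta_0$ to $p$, whose direction is transverse), and then the degenerating discs $\varphi_{v(z_n)}(\D)$ collapse toward $p$ in directions accumulating in $T^\C_p\partial D$, so hypothesis (1) gives $F(z_n)\to\partial D'$, again a contradiction. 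Thus $F(D)$ is both open and closed in $D'$, hence $F(D)=D'$ and $F$ is a biholomorphism.

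The hard parts are: the elementary but essential observation that $k_D$ does not see the sign of the tangent vector, which is what allows the transverse geodesic $\varphi_v$ to be fed into the cone hypothesis of (2); the structural input about the Lempert projection device (uniqueness of the retraction onto a complex geodesic and separation of points by the hyperplane fibres of $\{\tilde\rho_v\}_v$); and above all the properness step, where hypothesis (1) is used precisely to govern the complex–tangential approach to $p$ — the regime in which the geodesic foliation of $D$ through $p$ degenerates and Lemma \ref{lemma-slice-auto} yields no information.
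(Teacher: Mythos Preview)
Your first step—feeding the geodesic $\varphi_v$ into Lemma~\ref{lemma-slice-auto} via the cone hypothesis, using that $k_D(z;-u)=k_D(z;u)$ and $\delta_D(\varphi_v(r))\asymp 1-r$—is exactly what the paper does. The divergence begins afterwards.

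The paper's route from ``$F$ is a geodesic isometry on every leaf through $p$'' to ``$F$ is a biholomorphism'' is much shorter than yours: it proves only \emph{properness} and then invokes the theorem of Diederich--Forn{\ae}ss that a proper holomorphic map between strongly pseudoconvex domains is a biholomorphism. The properness argument is also cleaner than yours: given $z_k\to\xi\in\partial D$ with $\{F(z_k)\}$ relatively compact, one takes geodesics $\varphi_k$ through $z_k$ and $p$, normalised so that $\delta_D(\varphi_k(0))$ is maximal; hypothesis~(1) together with Proposition~\ref{geo-converge}(2) forces $\delta_D(\varphi_k(0))\ge c>0$, and then the single line
\[
K_D(\varphi_k(t_k),\varphi_k(0))=K_\D(t_k,0)=K_{D'}(F(\varphi_k(t_k)),F(\varphi_k(0)))
\]
gives the contradiction, since the left side tends to $\infty$ while the right stays bounded. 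No convergence of the $\psi_{v(z_n)}$ on $\oD$ is needed, and no Gromov hyperbolicity enters.

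Your attempt to bypass Diederich--Forn{\ae}ss by proving injectivity directly has a genuine gap. The argument hinges on two assertions: (i) the holomorphic retraction of $D$ onto $\varphi_v(\D)$ is unique, and (ii) the family $\{\tilde\rho_v\}_v$ of Lempert left inverses separates the points of $D$. Assertion~(i) is delicate—the paper itself says only that $\tilde\rho$ is unique ``when suitably normalized,'' and uniqueness of the retraction $\varphi_v\circ\tilde\rho_v$ among all holomorphic retractions onto $\varphi_v(\D)$ is not a statement you can simply quote. Assertion~(ii) is asserted as ``a structural property of the geodesic foliation'' with no proof and no reference; the fibres of each $\tilde\rho_v$ are affine hyperplanes, but they are not in general orthogonal to the disc direction, and showing that as $v$ varies these hyperplane foliations separate every pair of points of $D$ is a nontrivial claim. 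Without~(ii) your injectivity argument collapses, and with it the open-and-closed conclusion. The paper avoids both issues entirely by outsourcing injectivity to the Diederich--Forn{\ae}ss theorem.
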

\begin{proof}
Let $\varphi:\D \to D$ be a complex geodesic such that $\varphi(1)=p$. By Hopf's Lemma, $\varphi'(1)\in \C^N\setminus T_p^\C\partial D$. Moreover,  $\{\varphi(1-1/n)\}$ converges to $p$ non-tangentially since, as in the proof of the previous lemma, Proposition~\ref{Prop:stime-Kob1} implies that 
\[
\delta_D(\varphi(1-1/n))\sim \delta_\D(1-1/n)=|1-(1-1/n)|\sim |p-\varphi(1-1/n)|,
\]
where the last $\sim$ follows from  $\varphi$ extending smoothly on $\oD$. Therefore,  for $n$ sufficiently large, the sequence $\{r_n:=1-1/n\}$ satisfies the hypotheses of Lemma~\ref{lemma-slice-auto}. Hence, $F|_{\varphi(\D)}:\varphi(\D)\to F(\varphi(\D))$ is a biholomorphism. By the arbitrariness of $\varphi$, it follows that $F$ maps complex geodesics of $D$ containing $p$ in the closure onto complex geodesics of $D'$ acting  as an isometry on them. Hence, by \cite[Theorem~1.2]{BKZ}, $F$ is a biholomorphism.
\end{proof}

We can now prove Theorem~\ref{thm:mainball}:

\begin{proof}[Proof of Theorem~\ref{thm:mainball}] The result is essentially a consequence of Theorem~\ref{thm:convex-rigidity} and its proof. Indeed, the only issue is to see that for the ball one can reduce the hypothesis  by allowing only  $z\in (\C v+e_1)\cap\B^N$ converging to $e_1$ non-tangentially. 

However, we observe that  for all $v\in \C^N\setminus T_{e_1}^\C\partial \B^N$, the set $(\C v+e_1)\cap\B^N$ is (the image of) a complex geodesic in $\B^N$ (see, {\sl e.g.}, \cite{Abate}).  Therefore, in order to apply Lemma~\ref{lemma-slice-auto}, we just need the limit in b) for $z\in (\C v+e_1)\cap\B^N$ converging to $e_1$ non-tangentially. 
\end{proof}

\begin{remark}
It is very likely that one can relax the technical hypothesis of Theorem~\ref{thm:convex-rigidity}, namely, one can probably remove hypothesis a), although, at present, we need it in our proof. 

On the other hand, it seems however more complicated to extend the result to holomorphic maps between strongly {\sl pseudoconvex} domains, since our method is  based on complex geodesics and Lempert's theory: the original Burns-Krantz result for strongly (pseudo)convex domains was also based on complex geodesics, but, in that case, there was the advantage that in order to prove that a map is the identity is enough to prove it is the identity on an open set, and it is known that complex geodesics at a given boundary point of a strongly pseudoconvex domain exist and have holomorphic retractions for an open set of directions.
\end{remark}

\section{Appendix: Theorem \ref{thm:maindisk} vs.~Burns--Krantz} \label{sec:burnskrantz}

\begin{proposition}
  Let $f : \D \to \D$  be a holomorphic function such that
  $$ f(z)=z+o\left(|1-z|^3\right) \qquad \text{ as } z \to 1 \text{
    nontangentially} \, .$$
  Then
  $$ f^h(z)=1+o\left(|1-z|^2\right) \qquad \text{ as } z \to 1 \text{
    nontangentially} \, .$$
  \end{proposition}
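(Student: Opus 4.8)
The plan is to control the two factors in $f^h(z)=|f'(z)|\cdot\dfrac{1-|z|^2}{1-|f(z)|^2}$ separately and then multiply the resulting asymptotics. Write $g(z):=f(z)-z$, so that by hypothesis $|g(z)|=o(|1-z|^3)$ as $z\to 1$ nontangentially, and $f(z)=z+g(z)\to 1$, so $f^h$ is well defined near $1$. Fix a Stolz angle $S=\{w\in\D:|1-w|<M(1-|w|)\}$ at $1$ and work with $z\in S$, $z\to 1$; throughout we use the comparability $1-|z|\asymp|1-z|$ valid in $S$.

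First I would estimate $f'(z)=1+g'(z)$ by Cauchy's integral formula. Pick a small $\rho\in(0,1/M)$ and integrate $g(w)/(w-z)^2$ over the circle $\gamma_z=\{|w-z|=\rho|1-z|\}$. The elementary bounds $1-|w|>(1-\rho M)(1-|z|)>0$ and $|1-w|<(1+\rho)|1-z|<(1+\rho)M(1-|z|)$ for $w\in\gamma_z$ show that $\gamma_z$ (and the disk it bounds) lies in $\D$ and even in a fixed wider Stolz angle $S'$ with parameter $M'=(1+\rho)M/(1-\rho M)$, while $|1-w|\le(1+\rho)|1-z|\to 0$ along $\gamma_z$. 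Since the hypothesis holds nontangentially, hence on $S'$, we get $\max_{w\in\gamma_z}|g(w)|=o(|1-z|^3)$, and the standard Cauchy estimate yields
\[
|g'(z)|\le\frac{\max_{\gamma_z}|g|}{\rho\,|1-z|}=\frac{(1+\rho)^3}{\rho}\cdot o(|1-z|^2)=o(|1-z|^2)\,.
\]
Consequently $\big|\,|f'(z)|-1\,\big|\le|f'(z)-1|=|g'(z)|=o(|1-z|^2)$, i.e.\ $|f'(z)|=1+o(|1-z|^2)$. This derivative estimate, and in particular the care needed to keep the Cauchy circles inside a region on which the nontangential hypothesis applies (this is why $\rho<1/M$ and a slightly larger Stolz angle $S'$ are used), is the one genuinely nontrivial step; the rest is bookkeeping with $o(\cdot)$'s.

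For the denominator, expand $|f(z)|^2=|z+g(z)|^2=|z|^2+2\Re(\bar z\,g(z))+|g(z)|^2=|z|^2+o(|1-z|^3)$, so that $1-|f(z)|^2=(1-|z|^2)+o(|1-z|^3)$. Since nontangentially $1-|z|^2\ge 1-|z|\ge|1-z|/M$, dividing the error term by $1-|z|^2$ gives $o(|1-z|^2)$, whence
\[
\frac{1-|z|^2}{1-|f(z)|^2}=\frac{1}{1+o(|1-z|^2)}=1+o(|1-z|^2)\,.
\]
Multiplying this with $|f'(z)|=1+o(|1-z|^2)$ gives $f^h(z)=1+o(|1-z|^2)$ as $z\to 1$ nontangentially, as claimed.
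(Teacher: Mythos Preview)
Your proof is correct and follows essentially the same approach as the paper's: a Cauchy estimate over circles of radius comparable to $|1-z|$ that stay inside a slightly wider Stolz region gives $f'(z)=1+o(|1-z|^2)$, and a direct expansion handles the modulus factor $(1-|z|^2)/(1-|f(z)|^2)$. The only cosmetic differences are that the paper takes the Cauchy radius to be $\mathrm{dist}(z,\partial S')$ rather than a fixed multiple $\rho|1-z|$, and it computes $(1-|f(z)|)/(1-|z|)$ instead of expanding $|f(z)|^2$; neither choice changes the substance.
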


  \begin{proof}
Let $S$ be a sector in $\D$ with vertex at the point $1$ and opening angle $2 \alpha$ and $S'$ a slightly larger
sector with vertex at $1$ and opening angle $2 \beta$. If we denote for $z \in S$ by $C(z)$ the circle with center $z$ and
radius $r(z)=\dist(z,\partial S')$, then
\begin{eqnarray*}
f'(z) &=& \frac{1}{2 \pi i} \int \limits_{C(z)} \frac{w-1+(f(w)-w)}{(w-z)^2}
          \, dw \\
  &=& \frac{1}{2\pi i} \int \limits_{C(z)} \frac{dw}{w-z}+\frac{1}{2\pi i}
      \int \limits_{C(z)} \frac{z-1+f(w)-w}{(w-z)^2} \, dw \\
  &=& 1+\frac{1}{2\pi i} \int \limits_{C(z)} \frac{f(w)-w}{(w-z)^2} \, dw
      =:1+I(z) \, .
\end{eqnarray*}
Now for fixed $\eps>0$ by assumption there is $\delta>0$ such that $|f(w)-w| < \eps |1-w|^3$
for all $|w-1|<\delta$, $w \in S'$. It follows that
\begin{eqnarray*}
|I(z)| & \le & \frac{\eps}{2 \pi} \int \limits_{C(z)} \frac{|1-w|^3}{|w-z|^2}
               \, |dw| \le \frac{\eps}{r(z)} \max \limits_{w \in C(z)} |1-w|^3
               =\eps r(z)^2 \left(1+ \frac{|1-z|}{r(z)} \right)^3 \\
    & \le & \eps \left( 1+\csc (\beta-\alpha) \right)^3 r(z)^2 \le \eps \left(
            1+\csc (\beta-\alpha) \right)^3 |1-z|^2 \, .
\end{eqnarray*}
Hence we get
$$ f'(z)=1+o\left( |1-z|^2 \right) \qquad \text{ as } z \to 1 \text{
  nontangentially} \, .$$
Also by assumption, we have
$$ \frac{1-|f(z)|}{1-|z|}=\frac{1-|z|+o\left(|1-z|^3\right)}{1-|z|}=1+o\left(
  |1-z|^2 \right) \qquad \text{ as } z \to 1 \text{
  nontangentially} \, . $$
This implies
$$ |f'(z)| \frac{1-|z|^2}{1-|f(z)|^2}=1+o\left(|1-z|^2\right) \qquad \text{ as } z \to 1 \text{
    nontangentially} \, .$$
    \end{proof}

\hide{\section{Appendix: Further remarks} \label{sec:remarks}

The ``boundary Ahlfors lemma'' in \cite{KRR06} shows that the condition
\begin{equation} \label{eq:limit1} \lim \limits_{z \to 1} f^h(z) =1
  \end{equation}
(unrestricted approch!) is equivalent  to saying that $f$ has an analytic condition to some
open arc $\Gamma \subseteq \partial \D$ containing $1$ and $f(\Gamma) \subseteq \partial \D$.
The following example shows that if one only assumes that
\begin{equation} \label{eq:ang} \angle \lim \limits_{z \to 1} f^h(z)
= 1\,, \end{equation}
then $f$ does not even need to have an angular derivative at $z=1$.

\begin{example}[see \cite{Zorboska2015}]
Let $\Pi:=\{w \in \C \, : \, \Re w>0\}$ denote the right half--plane.
If $c>0$ is sufficiently small, then
$$g(w)=-c w \log w$$
maps the half--disk $\D \cap \Pi$ into itself. Let $\phi$ be a conformal map
from $\D \cap \Pi$ onto $\D$ such that $\phi(0)=1$. Then $f:=\phi \circ g
  \circ \phi^{-1}$ maps $\D$ into $\D$ and writing $\phi(w)=z$, we see that
$$ \lim \limits_{z \to 1} f(z)=\lim \limits_{w \to 0} \phi(g(w))=\lim
\limits_{w \to 0} \phi(-c w  \log w )=1 \, .$$
On the other hand $1-|z| \approx \Re w$ as $z \to 1$, so 
$$ \frac{1-|f(z)|}{1-|z|} \approx \frac{\Re g(w)}{\Re w}= c \log (1/|w|) + c
\, \theta \tan \theta \, , \qquad w=r e^{i \theta}  .$$
Hence 
$$ \liminf \limits_{z \to 1} \frac{1-|f(z)}{1-|z|}=\infty \, ,$$ 
so $f$ does not have an angular derivative at $z=1$.
In order to show that (\ref{eq:ang}) holds, it suffices to prove that
$$ \angle \lim \limits_{w \to 0} \frac{|g'(w)| \Re w}{\Re g(w)}=1 \, .$$
Now a computation shows for $w=r e^{i \theta}$
$$ \frac{|g'(w)| \Re w}{\Re g(w)}=\frac{\sqrt{ \left( \log r+1
    \right)^2+\theta^2} \, \cos \theta}{\theta \sin \theta-\cos \theta \log r} \, .
$$
Hence if $|\theta| \le \alpha<\pi/2$, then $|\cos \theta| \ge \cos \alpha>0$,
and so
$$ \lim \limits_{r \to 0} \frac{\sqrt{ \left( \log r+1
    \right)^2+\theta^2} \, \cos \theta}{\theta \sin \theta-\cos \theta \log
  r}=1 \qquad \text{ for all } |\theta| \le \alpha \, .$$
We note that for e.g.~$\phi=\arctan (-\log r)$, one gets
$$ \lim \limits_{r \to 0} \frac{\sqrt{ \left( \log r+1
    \right)^2+\theta^2} \, \cos \theta}{\theta \sin \theta-\cos \theta \log
  r}=\frac{2}{\pi+2}<1 \, , $$
so (\ref{eq:limit1})  does not hold for $f$.
\end{example}}


\begin{thebibliography}{99}
\bibitem{Abate} M. Abate, {\sl Iteration theory of holomorphic maps on taut manifolds}, Mediterranean Press, Rende, 1989.



\bibitem{Ab4} M. Abate, {\sl Common fixed points of commuting holomorphic
    maps}. Math. Ann. {\bf 283} (1989), 645-655.

\bibitem{Ahlfors1938}
L.~V. {Ahlfors}.
\newblock {\sl An extension of Schwarz's lemma.}
\newblock {{Trans. Am. Math. Soc.}} {\bf 43} (1938), 359--364.

\bibitem{Aladro} G. J. Aladro, {\sl Some consequences of the boundary behavior of the Caratheodory and Koabayashi metrics and applications to normal holomorphic functions}. Thesis (Ph.D.)?The Pennsylvania State University. 1985. 147 pp.


\bibitem{BaraccoZaitsevZampieri2006}
L.~{Baracco}, D.~{Zaitsev}, and G.~{Zampieri}.
\newblock {\sl A Burns-Krantz type theorem for domains with corners.}
\newblock {{Math. Ann.}} {\bf 336} no.~3 (2006), 491--504.

\bibitem{Beardon1997}
A.~F. {Beardon}.
\newblock {\sl The Schwarz-Pick Lemma for derivatives.}
\newblock { {Proc. Am. Math. Soc.}}, {\bf 125} no.~11 (1997), 3255--3256.

\bibitem{BeardonMinda2004}
A.F.~Beardon and D.~Minda, 
{\sl A multi-point Schwarz-Pick lemma}. 
J. Anal. Math. {\bf 92} (2004), 81-104. 

\bibitem{BeardonMinda2007}
A.~F. {Beardon} and D.~{Minda}.
\newblock {\sl The hyperbolic metric and geometric function theory.}
\newblock In {{Proceedings of the international workshop on quasiconformal
  mappings and their applications, December 27, 2005--January 1, 2006}}, pages
  9--56. New Delhi: Narosa Publishing House, 2007.

\bibitem{Bol2008} V.~Bolotnikov, 
{\sl A uniqueness result on boundary interpolation.}
{Proc.~Am.~Math.~Soc}. {\bf 136} No. 5 (2008), 1705-1715. 

\bibitem{BGZ} F.~Bracci, H.~Gaussier and A.~Zimmer,
{\sl The geometry of domains with negatively pinched K\"ahler metrics}.
 	arXiv:1810.11389, to appear  in J. Differential Geom.

\bibitem{BKZ} F. Bracci, L. Kosinski, W. Zwonek, {\sl Slice rigidity property of holomorphic maps Kobayashi-isometrically preserving complex geodesics}, J. Geom. Anal.  31 (2021), no. 11, 11292?11311.

\bibitem{BP} F. Bracci, G. Patrizio, {\sl Monge-Amp\`ere foliations with
    singularities at the boundary of strongly convex domains}. Math. Ann. {\bf
    332} no.~3 (2005), 499-522.

\bibitem{BPT} F. Bracci, G. Patrizio, S. Trapani, {\sl The pluricomplex
    Poisson kernel for strongly convex domains}. Trans. Amer. Math. Soc. {\bf
    361} no.~2 (2009), 979-1005. 

\bibitem{BZ} F. Bracci, D. Zaitsev, {\sl Boundary jets of holomorphic maps
    between strongly pseudoconvex domains}. Journal of Funct. Anal. {\bf 254} (2008), 1449-1466.

\bibitem{BurnsKrantz1994}
D.~M. {Burns} and S.~G. {Krantz}.
\newblock {\sl Rigidity of holomorphic mappings and a new Schwarz lemma at the
  boundary.}
\newblock {{J. Am. Math. Soc.}} {\bf 7} no.~3 (1994), 661--676.


\bibitem{Car31} H.~Cartan, {\sl Les fonctions de deux variables complexes et
    le probl\`eme de la repr\'sentation analytique.} {J.~de Math., IX. Ser}.
  {\bf 10} (1931), 1--114. 

\bibitem{CHL} C.H. Chang, M.C. Hu, H.P. Lee, {\sl Extremal analytic discs with prescribed boundary data}. Trans. Amer.
Math. Soc. {\bf 310} no.~1 (1988), 355-369.

\bibitem{Chelst2001}
D.~Chelst, {\sl A generalized Schwarz lemma at the boundary}. 
{Proc.~Am.~Math.~Soc}. {\bf 129} no.~11 (2001), 3275-3278. 

\bibitem{Chen}
H.~Chen, {\it On the Bloch constant}, in:
 Arakelian, N. (ed.) et al., Approximation, complex analysis, and potential
 theory,  Kluwer Academic Publishers (2001),
 129--161.

\bibitem{CS} A. Christodoulou, I.~Short, {\sl A hyperbolic-distance inequality for holomorphic maps}, Ann. Acad. Sci. Fenn., Math. {\bf 44}, no.~1 (2019), 293-300.
 
\bibitem{DF}  K. Diederich, J.-E. Forn\ae ss, {\sl Proper holomorphic images
    of strictly pseudoconvex domains}. Math. Ann. {\bf 259} no.~2 (1982), 279-286.

\bibitem{Dubinin2004} V.N.~Dubinin,
  {\sl The Schwarz inequality on the boundary for functions regular in the disk.}
{J.~Math.~Sci}. {\bf 122} no.~6 (2004), 3623-3629.
  
\bibitem{EJLS2014} M.~Elin, F.~Jacobzon, M.~Levenshtein, D.~Shoikhet,
  {\it The Schwarz lemma: rigidity and dynamics}, in: A.~Vasil’ev,  Harmonic and complex analysis and its applications, Springer (2014), 135--230. 


\bibitem{Golusion1945}
  G.M.~Golusin, 
{\sl Some estimations of derivatives of bounded functions}. (Russian. English summary)
Rec. Math. [Mat. Sbornik] N.S. {\bf 16(58)} (1945),  295-306.

\bibitem{Go} G. M. Golusin, {\sl Geometric Theory of Functions of One Complex
    Variable}, AMS 1969.
    
\bibitem{Graham} I. Graham, {\sl Boundary behavior of the Carath\'eodory and Kobayashi metrics on strongly pseudoconvex domains in $\mathbb C^n$ with smooth boundary}.
Trans. Amer. Math. Soc. {\bf 207} (1975), 219-240.

\bibitem{Heins1962}
M.~{Heins}.
\newblock {\sl On a class of conformal metrics.}
\newblock {{Nagoya Math. J.}}, {\bf 21} (1962), 1--60.

\bibitem{Huang} X. Huang, {\sl A preservation principle of extremal mappings near a strongly pseudoconvex point and its
applications}. Illinois J. Math. 38(2), 283--302 (1994).

\bibitem{Jorgensen1956}
V.~{J{\o}rgensen}.
\newblock {\sl On an inequality for the hyperbolic measure and its applications in
  the theory of functions.}
\newblock {{Math. Scand.}} {\bf  4} (1956), 113--124.

\bibitem{Kob}  S. Kobayashi, {\sl Hyperbolic complex spaces}. Grundlehren der Mathematischen Wissenschaften [Fundamental Principles of Mathematical Sciences], 318. Springer-Verlag,
Berlin, 1998. xiv+471 pp. ISBN: 3-540-63534-3

\bibitem{K2013} D. Kraus, {\sl Critical sets of bounded analytic functions, zero sets of Bergman
spaces and nonpositive curvature}. Proc. London Math. Soc. (3) {\bf 106} (2013), 931-956.

\bibitem{KR2008}
D. Kraus, O. Roth,{\sl The behaviour of solutions of the Gaussian curvature
  equation near an isolated boundary point.} Math. Proc. Cambridge
Philos. Soc. {\bf 145} no.~3 (2008),  643-667.

\bibitem{KR2013} D. Kraus, O. Roth,  {\sl Maximal Blaschke
    products}. Adv. Math. {\bf 241} (2013), 58-78.

\bibitem{KRR06}
D. Kraus, O. Roth, St. Ruscheweyh,
{\sl A boundary version of Ahlfors' Lemma, locally complete  conformal metrics
and conformally invariant reflection  principles for analytic maps}.
 J.~Anal.~Math.  {\bf 101} (2007), 219--256.
  
\bibitem{L1} L. Lempert, {\sl La metrique de Kobayashi et la representation des domaines sur la boule}. Bull. Soc. Math.
Fr. {\bf 109} (1981), 427-474.

\bibitem{L2} L. Lempert, {\sl Holomorphic retracts and intrinsic metrics in
    convex domains}. Analysis Math. {\bf 8} (1982),
257-261.

\bibitem{L3} L. Lempert, {\sl Intrinsic distances and holomorphic retracts}. Complex Analysis and Applications'81,
Sofia, (1984), 341-364.

\bibitem{LiuTang2016}
T.~Liu and X.~Tang, 
{\sl Schwarz lemma at the boundary of strongly pseudoconvex domain in $\C^n$.}
{Math.~Ann}. {\bf 366} no. 1-2  (2016), 655--666.

\bibitem{Loe23} K.~L\"owner, {\sl Untersuchungen \"uber schlichte konforme
    Abbildungen des Einheitskreises. I}. {Math.~Ann}.~ {\bf 89} (1923), 103--121. 

\bibitem{Ma} D. Ma, {\sl On iterates of holomorphic maps}.
Math. Z. {\bf 207} no. 3 (1991), 417-428.
  
\bibitem{Mashreghi} J.~Mashreghi, {\sl A note on Schwarz's lemma}, Complex Anal.~Synerg.
{\bf 7}, no. 2, Paper No. 8, 3 p. (2021). 

\bibitem{Minda1987}
D.~{Minda}.
\newblock {\sl The strong form of Ahlfors' lemma.}
\newblock {{Rocky Mt. J. Math.}} {\bf 17} (1987), 457--461.

\bibitem{Neh1946} Z.~Nehari. {\sl A generalization of Schwarz' lemma}. {Duke
    Math.\!\;J.} {\bf 14} (1947), 1035--1049.

  \bibitem{Nit57}  J.~Nitsche. {\sl \"Uber die isolierten Singularit\"aten der
  L\"osungen von $\Delta u =e^u$}. {Math.~Z.}  {\bf 68} (1957), 316--324.
  



\bibitem{Osserman2000}
 R.~Osserman, 
{\sl A sharp Schwarz inequality on the boundary}. 
{Proc.~Am.~Math.~Soc}. {\bf 128} no.~12 (2000), 3513-3517. 

\bibitem{Ransford1995}
T.~{Ransford}.
\newblock {\sl Potential theory in the complex plane.}
\newblock Cambridge: Univ. Press, 1995.

\bibitem{Royden1986}
H.~{Royden}.
\newblock {\sl The Ahlfors-Schwarz lemma: The case of equality.}
\newblock { {J. Anal. Math.}} {\bf 46} (1986), 261--270.

\bibitem{Schiff}
 J.~L. {Schiff}.
\newblock {\sl {Normal Families.}}
\newblock New York: Springer-Verlag, 1993.

\bibitem{Schwarz1869}
H.~A.~Schwarz, {\sl 
Gesammelte mathematische Abhandlungen. 2 B\"ande.}
Berlin, Springer, (1890).

\bibitem{Shapiro1993}
J.~H. {Shapiro}.
\newblock {\sl {Composition operators and classical function theory.}}
\newblock New York: Springer-Verlag, 1993.

\bibitem{Shoikhet2008}
D.~Shoikhet, 
{\sl Another look at the Burns-Krantz theorem.}
{J.~Anal.~Math}. {\bf 105} (2000), 19-42. 

\bibitem{TLZ17}
X.~Tang, T.~Liu and W.~Zhang, 
{\sl Schwarz lemma at the boundary and rigidity property for holomorphic
  mappings on the unit ball of $\C^n$}. {Proc.~Am.~Math.~Soc}. {\bf 145} no.~4
(2017), 1709-1716.

\bibitem{TauVla2001}
  R.~Tauraso and F.~Vlacci, 
{\sl Rigidity at the boundary for holomorphic self-maps of the unit disk}. 
{ Complex Variables, Theory Appl}. {\bf 45} no.~2 (2001), 151-165. 

\bibitem{Yamashita1994}
S.~{Yamashita}.
\newblock {\sl The Pick version of the Schwarz lemma and comparison of the
  Poincar{\'e} densities.}
\newblock {{Ann. Acad. Sci. Fenn., Ser. A I, Math.}} {\bf 19} no.~2 (1994), 291--322.

\bibitem{Yamashita1997}
S.~{Yamashita}.
\newblock {\sl Goluzin's extension of the Schwarz-Pick inequality.}
\newblock { {J. Inequal. Appl.}} {\bf 1} no.~4 (1997), 345--356.


\bibitem{Yau1978}
S.--T.~Yau, 
{\sl A general Schwarz lemma for K\"ahler manifolds}. 
{ Am.~J.~Math}.~{\bf 100} (1978), 197-203.


\bibitem{Zimmer2018}
A.~Zimmer, {\sl Two boundary rigidity results for holomorphic maps}.  	arXiv:1810.05669.
\bibitem{Zorboska2015}
N.~{Zorboska}.
\newblock {\sl Hyperbolic distortion, boundary behaviour and finite Blaschke
  products.}
\newblock {{Comput. Methods Funct. Theory}} {\bf 15} no.~2 (2015), 277--287.


\end{thebibliography}
\end{document}